\renewcommand{\d}{\delta} 
 \renewcommand{\a}{\alpha}
 \renewcommand{\t}{\theta}
\newcommand{\s}{\sigma}
\newcommand{\mx}{\mu^\xi} 
\newcommand{\re}{\mathbb{R}}
\newtheorem{theorem}{Theorem}[section]
\newtheorem{proposition}[theorem]{Proposition}
\newtheorem{corollary}[theorem]{Corollary}
\newtheorem{remark}[theorem]{Remark}
\newtheorem{question}[theorem]{Question}
\numberwithin{equation}{section}
\numberwithin{figure}{section}
\newcommand{\field}[1]{\mathbb{#1}}
\newcommand{\C}{\field{C}}
\def\intave#1{\int_{#1}\hbox{\llap{$\raise2.3pt\hbox{\vrule
height.9pt width7pt}\phantom{\scriptstyle{#1}}\mkern-2mu$}}}
\title{flow approach on Riesz type nonlocal energies}
\author{Jiaxin He}
\address{School of Mathematics, Hunan University, Changsha, Hunan, China.}
\email{17773612125@163.com}
\author{Qinfeng Li}
\address{School of Mathematics, Hunan University, Changsha, P.R. China.}
\email{liqinfeng1989@gmail.com}
\author{Juncheng Wei}
\address{Department of Mathematics, The Chinese University of Hong Kong, Hong Kong.}
\email{wei@math.cuhk.edu.hk}
\author{Hang Yang}
\address{School of Mathematics, Hunan University, Changsha, P.R. China.}
\email{hangyang0925@gmail.com}
\thanks{Research of Qinfeng Li is supported by National Key R\&D Program of China (2022YFA1006900) and the National Science Fund of China General Program (No. 12471105).  The research of Juncheng Wei is partially supported
by GRF grant of HK RGC entitled “New frontiers in singular limits of elliptic and parabolic equations".}
\begin{document}
\maketitle

\begin{abstract}
Via continuous deformations based on natural flow evolutions, we prove several novel monotonicity results for Riesz-type nonlocal energies on triangles and quadrilaterals. Some of these results imply new and simpler proofs for known theorems without relying on any symmetrization arguments.

\end{abstract}

\section{Introduction}
Let $\Omega$ be a bounded domain in $\mathbb{R}^2$. Define
\begin{align*}
    D(\Omega)=\int_\Omega \int_\Omega K(|x-y|)\, dxdy,
\end{align*}where $K: \mathbb{R}^+\to \mathbb{R}$ is a $C^1$, strictly decreasing function and $\int_0^1 K(r)r\, dr<\infty$. The last condition is to guarantee that $D(\cdot)$ is finite on bounded planar domains, as shown in \cite{BCT}. $D(\Omega)$ is often called the Riesz-type potential nonlocal energies, and the prototype of $K(r)$ is given by $r^{-\alpha}, \, \alpha \in (0,2)$.

It is a well-established fact that, in any dimension, $D(\cdot)$ is uniquely maximized by a ball under a volume constraint, thanks to Riesz's rearrangement inequality \cite{Riesz}. The discrete optimization problem is then natural, and it is often conjectured that, among polygons with $N$ sides and fixed area, the regular polygon uniquely maximizes $D(\cdot)$. Surprisingly, this is in general not true, as it is proved in the very interesting work \cite{BBF} that the maximality of regular polygon should be sensitive to the choice of the kernel $K$, when $N$ is large. 

When $N=3,4$, the conjecture is indeed correct, as proved earlier in \cite{BCT} via an iteration of Steiner symmetrization, in the same flavor of the proof in \cite[Section 7.4]{PS51}. The rigidity results for triangles and quadrilaterals are also obtained in \cite{BCT} along some particularly chosen shape deformations.

We are motivated by the following question:

\begin{question}
\label{kq1}
Are there other area-preserving deformations, which are not based on the Steiner symmetrization, but along which $D(\cdot)$ is still increasing?
\end{question} 

This question is of significance since it can lead to new monotonicity results and provide more diverse optimization strategies. To tackle the question, we utilize the flow approach, as it not only naturally
handles continuous evolution and transforms the monotonicity investigation of $D(\cdot)$ into the sign
analysis of shape derivatives of $D(\cdot)$ along the flow, but also eliminates the reliance on Steiner symmetrization argument.

In this paper, we mainly focus our attention on triangles and quadrilaterals, which represent two important classes of planar domains. Though the geometries of triangles and quadrilaterals are simple, many significant results have been established on them, which often inspire further important results on generic shapes. For a comprehensive introduction on shape optimization problems on triangular domains, we refer to the classical monographs \cite{PS51} and \cite{Henrot}, as well as the excellent survey in \cite[Section 6]{nshap} by Laugesen and Siudeja.

Concerning Question \ref{kq1} for triangles, the first result we obtain is the following:

\begin{theorem}
\label{yangsheng2}
Let $\Omega=\triangle_{ABC}$ be a triangle, with $AB$ being the longest side and lying on the $x$-axis, and with $C$ lying on the positive $y$-axis. Suppose that $|AB|>|BC|\ge |AC|$, and we let $C_t=(1+t)C$, $\Omega_t=\sqrt{\tfrac{|OC|}{|OC_t|}}\triangle_{ABC_t}=\triangle_{A_tB_t\tilde{C}_t}$, where $O$ is the origin. Let $t_1>0$ be the unique number such that $|BA|=|BC_{t_1}|$. Then, for $t\in (-1,t_1)$, $D(\Omega_t)>0$ is a strictly increasing function. 
\end{theorem}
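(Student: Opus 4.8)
The plan is to recognize the prescribed deformation as a one-parameter family of \emph{area-preserving linear maps} and thereby convert the monotonicity of $D$ into a single sign condition on a shape derivative. Writing $\tau=1+t$, the vertical stretch $C\mapsto C_t=(0,\tau|OC|)$ carries $\triangle_{ABC}$ to $\mathrm{diag}(1,\tau)\triangle_{ABC}$, because $A,B$ lie on the $x$-axis and are fixed; composing with the homothety of ratio $\tau^{-1/2}$ gives
\begin{equation*}
\Omega_t=L_\tau\,\Omega_0,\qquad L_\tau=\mathrm{diag}\!\left(\tau^{-1/2},\tau^{1/2}\right),\qquad \det L_\tau=1 .
\end{equation*}
Hence $\Omega_t$ evolves by the divergence-free velocity field $V(x)=\dot L_\tau L_\tau^{-1}x=\tfrac{1}{2\tau}(-x_1,x_2)$, and the Hadamard formula yields $\frac{d}{dt}D(\Omega_t)=2\int_{\partial\Omega_t} v_{\Omega_t}\,(V\cdot n)\,d\mathcal H^1$ with $v_{\Omega_t}(x)=\int_{\Omega_t}K(|x-y|)\,dy$. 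A first useful observation is that on the base $A_tB_t\subset\{x_2=0\}$ one has $V\cdot n=0$, so only the two slanted edges contribute.

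Rather than estimate the boundary potential directly, I would exploit $\det L_\tau=1$: changing variables $x=L_\tau\xi,\ y=L_\tau\eta$ gives
\begin{equation*}
D(\Omega_t)=\int_{\Omega_0}\!\!\int_{\Omega_0}K\!\left(\sqrt{\tau^{-1}(\xi_1-\eta_1)^2+\tau(\xi_2-\eta_2)^2}\right)d\xi\,d\eta ,
\end{equation*}
which I differentiate in $\tau$ and then transport back to $\Omega_t$ to obtain the compact identity
\begin{equation*}
\frac{d}{dt}D(\Omega_t)=\frac{1}{2\tau}\int_{\Omega_t}\!\!\int_{\Omega_t}\frac{K'(|x-y|)}{|x-y|}\Big[(x_2-y_2)^2-(x_1-y_1)^2\Big]\,dx\,dy=:\frac{1}{2\tau}\,I(\tau).
\end{equation*}
Since $K$ is strictly decreasing, $K'(|x-y|)/|x-y|<0$, so $I(\tau)>0$ amounts to saying that the pairs of points of $\Omega_t$ are, in the kernel-weighted sense, more horizontally than vertically separated. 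Passing to the autocorrelation $A_{\Omega_t}(w)=|\Omega_t\cap(\Omega_t+w)|$ and to polar coordinates $w=(r\cos\theta,r\sin\theta)$, the radial part of the weight averages out and one is left with
\begin{equation*}
I(\tau)=\int_0^\infty K'(r)\,r^2\Big(\int_0^{2\pi}(\sin^2\theta-\cos^2\theta)\,A_{\Omega_t}(r\cos\theta,r\sin\theta)\,d\theta\Big)\,dr ,
\end{equation*}
so it suffices to control the second angular moment of the triangle's autocorrelation.

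The positivity of $I(\tau)$ is where the hypothesis that $AB$ is the longest side enters, and it is the \emph{main obstacle}. The point is that $t<t_1$ is exactly the range in which the base $A_tB_t$ of $\Omega_t$ stays longest (at $\tau=1+t_1$ one computes $|A_tB_t|=|B_t\tilde C_t|$), which should force the autocorrelation to be horizontally dominant. Concretely I expect to show that for every $r>0$ the inner angular integral is nonpositive, i.e. $\int_0^{2\pi}\cos(2\theta)\,A_{\Omega_t}\,d\theta\ge0$, with strict inequality on a set of positive measure, whence $I(\tau)>0$ follows from $K'<0$. To establish this I would either compute the piecewise-quadratic autocorrelation of the triangle explicitly and track its horizontal versus vertical decay, or—more in the spirit of the flow approach—return to the boundary form and prove that $v_{\Omega_t}$ is larger on the upper (apex) half of each slanted edge than on the lower (base) half, by reflecting across the perpendicular bisector of that edge and using the strict monotonicity of $K$ together with the longest-side condition to guarantee the required inclusion of the reflected base-side region. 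The delicate part is that neither horizontal dominance at every radius nor upper-heaviness on each edge is transparent near $\tau=1+t_1$; quantifying precisely how the longest-side inequality feeds these comparisons, and verifying that the argument degrades only at $t_1$, is the crux of the proof.
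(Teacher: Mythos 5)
Your setup coincides with the paper's: you identify the same unit-determinant linear flow, the same velocity field $\tfrac{1}{2\tau}(-x_1,x_2)$, invoke the same boundary evolution formula (the paper's Theorem \ref{Tshapederivative}), and correctly observe that the base $A_tB_t$ contributes nothing. Your bulk identity $\frac{d}{dt}D(\Omega_t)=\frac{1}{2\tau}\iint_{\Omega_t\times\Omega_t}\frac{K'(|x-y|)}{|x-y|}\bigl[(x_2-y_2)^2-(x_1-y_1)^2\bigr]\,dx\,dy$ is also correct and is a legitimate alternative starting point. The difficulty is that the proposal stops exactly where the real work begins: neither candidate route for the positivity is carried out. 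The first route, reducing to $\int_0^{2\pi}\cos(2\theta)\,A_{\Omega_t}(r\cos\theta,r\sin\theta)\,d\theta\ge 0$ for \emph{every} $r>0$, is a much stronger pointwise-in-$r$ claim than what is needed, and you give no mechanism by which the hypothesis $|AB|>|BC|\ge|AC|$ would enter at each fixed radius; I would not expect this to go through without essentially reconstructing the boundary argument anyway.

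Your second route is the paper's actual proof, but two concrete ingredients are missing. First, for the pairing of the upper and lower halves of a slanted edge to control the sign of $\int_{\mathrm{edge}}v_{\Omega_t}\,(V\cdot n)\,ds$, you need that $V\cdot n$ is affine in arclength and vanishes exactly at the midpoint of that edge, hence is odd about it; this amounts to checking that the line through the origin with slope $\tan\beta_t$ (resp.\ $-\tan\gamma_t$) passes through the midpoint of $B_t\tilde C_t$ (resp.\ $A_t\tilde C_t$) --- an elementary but indispensable computation you never make, and without it the half-edge pairing does not by itself determine the sign of the integral. Second, the comparison ``$v_{\Omega_t}$ is larger at the reflected point'' is precisely the paper's Proposition \ref{symmetrycomparison}: one splits the triangle by the perpendicular bisector of the edge into an isosceles part, whose contribution to $v_{\Omega_t}$ is equal at the two mirror points, and a residual triangle lying strictly on one side of that bisector, whose contribution is strictly larger at the nearer mirror point because $K$ is strictly decreasing; the side-length hypothesis enters only to guarantee that this residual triangle sits on the apex side, via $|A_tB_t|>|A_t\tilde C_t|$ and $|A_tB_t|>|B_t\tilde C_t|$, which hold for all $t\in(-1,t_1)$ by the definition of $t_1$ and because vertical stretching preserves $|OB_t|\ge|OA_t|$. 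Once these two points are supplied there is no delicate degeneration ``near $t_1$'' to quantify: the argument is uniform on $(-1,t_1)$ and only strictness is lost at $t=t_1$. As written, the proposal is a correct plan whose decisive step is announced rather than proved.
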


Similarly, we also have: \begin{theorem}
\label{yangsheng2'}
    Let $O$ be the origin, $\Omega=\triangle_{ABC}$ be a non-obtuse triangle with $AB$ being the shortest side and lying on the $x$-axis, and with $C$ lying on the positive $y$-axis. Suppose that $|AB|<|BC|\le  |AC|$, and we let $C_t=(1-t)C$ and $\Omega_t=\sqrt{|OC|/|OC_t|}\triangle_{ABC_t}$. Let $t_2>0$ be the unique number such that $|BA|=|BC_{t_2}|$.    Then, for $t\in (-\infty,t_2)$, $D(\Omega_t)$ is a strictly increasing function.
\end{theorem}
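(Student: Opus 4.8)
The plan is to realize the deformation as an area-preserving \emph{linear} flow and to reduce the claimed monotonicity to the sign of a single shape derivative, exactly as in Theorem \ref{yangsheng2} with the roles of the two coordinates interchanged. Writing $M_t:(x_1,x_2)\mapsto(x_1,(1-t)x_2)$ for the shear that fixes $A,B$ and sends $C$ to $C_t$, and composing with the dilation $\sqrt{|OC|/|OC_t|}=(1-t)^{-1/2}$, one gets $\Omega_t=L_t\Omega$ with
$$L_t=\begin{pmatrix}(1-t)^{-1/2}&0\\0&(1-t)^{1/2}\end{pmatrix},\qquad \det L_t=1.$$
Changing variables $x=L_t\xi,\ y=L_t\eta$ in $D(\Omega_t)=\int_{\Omega_t}\int_{\Omega_t}K(|x-y|)$ and differentiating under the integral sign (using $\det L_t=1$ and, on changing back to $\Omega_t$ via $L_t^{-1}=\mathrm{diag}((1-t)^{1/2},(1-t)^{-1/2})$) gives, for every $t<1$,
$$\frac{d}{dt}D(\Omega_t)=\frac{1}{2(1-t)}\underbrace{\int_{\Omega_t}\int_{\Omega_t}\frac{K'(|x-y|)}{|x-y|}\big[(x_1-y_1)^2-(x_2-y_2)^2\big]\,dx\,dy}_{=:\,J(\Omega_t)}.$$
Since $t_2\in(0,1)$ (as $\sqrt{b^2+((1-t)|OC|)^2}$ decreases strictly from $|BC|>|AB|$ at $t=0$ to $b<|AB|$ as $t\to1^-$), the prefactor is positive on $(-\infty,t_2)$ and the theorem reduces to showing $J(\Omega_t)>0$ there.

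Because $K$ is strictly decreasing, $g(r):=-K'(r)/r>0$, so $J(\Omega_t)>0$ is precisely the statement that the $g$-weighted vertical second moment dominates the horizontal one,
$$\int_{\Omega_t}\int_{\Omega_t}g(|x-y|)(x_2-y_2)^2\,dx\,dy\ >\ \int_{\Omega_t}\int_{\Omega_t}g(|x-y|)(x_1-y_1)^2\,dx\,dy.$$
Introducing the covariogram $A_\Omega(z)=|\Omega\cap(\Omega+z)|$ and passing to polar coordinates $z=(r\cos\theta,r\sin\theta)$ in the difference variable, this becomes
$$J(\Omega_t)=\int_0^\infty\big(-K'(r)\big)\,r^2\Big(-\!\int_0^{2\pi}\cos2\theta\,A_{\Omega_t}(r,\theta)\,d\theta\Big)\,dr,$$
so it suffices to prove the kernel-independent, pointwise-in-$r$ bound
$$g_{\Omega_t}(r):=\int_0^{2\pi}\cos2\theta\,A_{\Omega_t}(r,\theta)\,d\theta\ \le\ 0\qquad(r>0),$$
strictly on a set of positive measure. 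This sufficient condition is scale invariant, since $A_{\lambda\Omega}(r,\theta)=\lambda^2A_\Omega(r/\lambda,\theta)$ forces $g_{\lambda\Omega}(r)=\lambda^2g_\Omega(r/\lambda)$; I may therefore drop the dilation and work with the unscaled triangles $\triangle_{ABC_t}$, of vertices $A=(-a,0),\ B=(b,0),\ C_t=(0,H)$, where $H=(1-t)|OC|$, $a\ge b>0$ encodes $|AC|\ge|BC|$, and the threshold $|BA|=|BC_{t_2}|$ reads $H=H_2:=\sqrt{a(a+2b)}$. Thus the range $t<t_2$ is exactly $H>H_2$.

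To establish $g_{\Omega_t}(r)\le0$ I would use the chord (Cavalieri) representation $A_{\Omega_t}(r,\theta)=\int_{\R}(\ell_\theta(s)-r)_+\,ds$, where $\ell_\theta(s)$ is the length of the slice of the triangle by the line in direction $\theta$ at transversal offset $s$; the weight $\cos2\theta$ counts near-vertical chords ($\theta\approx\pi/2$) negatively and near-horizontal chords ($\theta\approx0$) positively, and the tallness $H>H_2$ makes the vertical chords long enough that the negative part always dominates. Concretely, the covariogram of a triangle is explicitly piecewise quadratic in $z$, so $g_{\Omega_t}(r)$ can be written down and its sign verified on each of the finitely many regions; alternatively I expect a reflection argument in the difference variable across $\{z_2=z_1\}$, which flips $(x_1-y_1)^2-(x_2-y_2)^2$, pairing each horizontal separation with a strictly longer vertical one using $a\ge b$ and $H>H_2$.

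The main obstacle is exactly this last step: controlling the sign of the angular moment $g_{\Omega_t}(r)$ \emph{uniformly in} $r$, and in particular checking that it degenerates only as $t\uparrow t_2$ (equivalently $H\downarrow H_2$), which is what pins down the sharp endpoint $t_2$ rather than the global maximizer of $D$ along the flow. The non-obtuseness hypothesis enters only to guarantee that the foot of the altitude from $C$ is an interior point $O$ of $AB$, i.e. $a,b>0$, a property preserved by $L_t$ for all $t<1$ (indeed one checks that every $\Omega_t$ with $t<t_2$ is acute, since $H>H_2>\sqrt{ab}$). Finally, $D(\Omega_t)<\infty$ by the standing assumption $\int_0^1K(r)r\,dr<\infty$, and strictness of the moment inequality (the triangle is never symmetric across $\{z_2=z_1\}$ because $a\ge b>0$ and $H>0$) together with $K'<0$ upgrade $J(\Omega_t)>0$ to the strict monotonicity of $D(\Omega_t)$ on $(-\infty,t_2)$.
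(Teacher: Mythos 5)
Your reduction is set up correctly: realizing $\Omega_t=L_t\Omega$ with $\det L_t=1$ and differentiating under the integral does give
$\tfrac{d}{dt}D(\Omega_t)=\tfrac{1}{2(1-t)}\int_{\Omega_t}\int_{\Omega_t}\tfrac{K'(|x-y|)}{|x-y|}\bigl[(x_1-y_1)^2-(x_2-y_2)^2\bigr]\,dx\,dy$, the computation of $t_2=1-H_2/|OC|\in(0,1)$ with $H_2=\sqrt{a(a+2b)}$ is right, and the passage to the covariogram and the angular moment $g_{\Omega_t}(r)=\int_0^{2\pi}\cos 2\theta\,A_{\Omega_t}(r,\theta)\,d\theta$ is a legitimate, kernel-independent reformulation (indeed, since $-K'$ may be concentrated near any radius within the admissible class, $g_{\Omega_t}(r)\le 0$ for a.e.\ $r$ is essentially forced if the theorem holds for all such $K$). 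But the proof stops exactly where the work begins: you never establish $g_{\Omega_t}(r)\le 0$. You offer two routes --- an explicit case-by-case verification of the sign of a piecewise-polynomial function over all combinatorial regions of the triangle's covariogram, and a reflection of the difference variable across $\{z_2=z_1\}$ that you only ``expect'' to work --- and you yourself flag this as ``the main obstacle.'' Neither is carried out, and neither is routine: the reflection across $\{z_2=z_1\}$ does not map $\Omega_t-\Omega_t$ into itself in any obvious way, and the pointwise-in-$r$ inequality with the correct threshold $H>H_2$ is precisely the sharp geometric content of the theorem. As written, this is a (correct) reformulation of the statement, not a proof of it.

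For comparison, the paper avoids the volume integral entirely: it keeps the shape derivative in boundary form via Theorem \ref{Tshapederivative}, so that $\tfrac{d}{dt}D(\Omega_t)$ becomes a sum of integrals of $V_{\Omega_t}\,(\eta\cdot\nu)$ over the two slanted sides (the base contributes nothing), with $\eta\cdot\nu$ vanishing on the line through $O$ and the midpoint of each side and changing sign there. The sign is then decided by Proposition \ref{symmetrycomparison}, which compares $V_{\Omega_t}$ at points placed symmetrically about the midpoint of a side by reflecting across the perpendicular bisector; the hypothesis $t<t_2$ is exactly what keeps the relevant side-length inequality (hence the direction of that comparison) valid, and also explains why $t_2$ is the natural endpoint even though the derivative need not vanish there. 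If you want to salvage your route, you must either carry out the explicit covariogram computation or supply an actual pairing argument for the second-moment inequality; otherwise the boundary-integral-plus-reflection mechanism of the paper is the missing ingredient.
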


The motivation of establishing Theorems \ref{yangsheng2}-\ref{yangsheng2'} is as follows. A given non-equilateral triangle $\triangle_{ABC}$ must belong to one of the following two cases:
\begin{enumerate}
    \item It has exactly one shortest height.
    \item It has at least two shortest height. 
\end{enumerate} 
If $\triangle_{ABC}$ belongs to the first case, then we can continuously stretch the shortest height while scaling the triangle to keep its area constant, and during the process, the triangle gradually becomes an isosceles triangle belonging to the second case. Then, Theorem \ref{yangsheng2} says that along the evolution, $D(\cdot)$ is really strictly increasing. If $\triangle_{ABC}$ belongs to the second case, then it is a sub-equilateral triangle (i.e., an isosceles triangle with aperture less than or equal to $\pi/3$). Then Theorem \ref{yangsheng2'}, which is more general, gives the continuous deformation that gradually transforms such a triangle into an equilateral one by compressing the tallest height while preserving the area, thereby making $D(\cdot)$ increasing during the process.

An illustration of Theorem \ref{yangsheng2} and Theorem \ref{yangsheng2'} can be seen in Figures \ref{fig:ctm1}-\ref{fig:ctm2} below, and the proof of Theorems \ref{yangsheng2}-\ref{yangsheng2'} is by specifically constructing a time-dependent vector field $\eta$, which generates a flow map $F_t$ mapping $\Omega$ to $\Omega_t$, and then analyzing the sign of the derivative of $D(\Omega_t)$ via the reflection argument. 

\begin{figure}[htp]
\centering
\begin{tikzpicture}[scale = 2.5]

\fill (-0.4, 0) circle (0.02 ) node[below ] {\small$A$};
\fill (0,1) circle (0.02 ) node[right]{\small $C_{t_1}$};
\fill (0, 0) circle (0.02 ) node[below ] {\small$O$};
\fill (1.2, 0) circle (0.02 ) node[below ] {\small$B$};
\fill (0, 0.5) circle (0.02 ) node[right] {\small$C$};
\fill (0,0.7) circle (0.02 ) node[right] {\small$C_{t}$};

\draw[->] (-0.6,0)--(1.5,0) node[right] {$x$};
\draw[->] (0,-0.2)--(0,1.3) node[right] {$y$};

\draw (0,0)--(1.2,0);
\draw[thick,dotted] (0,0.7)--(1.2,0);
\draw[thick,dotted] (0,0.7)--(-0.4,0);
\draw[thick,dotted] (0,1)--(-0.4,0);
\draw[thick,dotted] (0,1)--(1.2,0);
\draw (-0.4,0)--(0,0.5);
\draw (1.2,0)--(0,0.5);

\node at (0.3,0.2) {\small $\Omega$};

\draw[->,thick] (1.2, 0.6)--(2.9,0.6);

\node[above] at (2.1, 0.7) {\small $\Omega_t=\sqrt{\tfrac{|OC|}{|OC_t|}}\triangle_{ABC_t}=\triangle_{A_tB_t\tilde{C}_t}$};


\draw[->] (2.9,0)--(5,0) node[right] {$x$};
\draw[->] (3.5,-0.2)--(3.5,1.3) node[right] {$y$};

\fill (3.5, 0) circle (0.02 ) node[below]{\small $O$};
\fill (3.5-0.845*0.4,0) circle (0.02 ) node[below left]{\small $A_t$};
\draw [thick, dotted] (3.5-0.845*0.4,0)--  (3.5, 0.7*0.845);
\draw [thick, dotted] (3.5-0.707*0.4,0)--  (3.5, 0.707);
\draw [thick, dotted] (3.5+0.845*1.2,0)--  (3.5, 0.7*0.845);
\draw [thick, dotted] (3.5+0.707*1.2,0)--  (3.5, 0.707);
\fill (3.5-0.707*0.4,0) circle (0.02 ) node[below ]{\small $A_{t_1}$};
\fill (3.5+1.2*0.845,0) circle (0.02 ) node[below]{\small $B_{t}$};
\fill (3.5+1.2*0.707,0) circle (0.02 ) node[below]{\small $B_{t_1}$};
\fill (3.5, 0.7*0.845) circle (0.02 ) node[below]{\small $\tilde{C}_{t}$};
\fill (3.5, 1*0.707) circle (0.02 ) node[right ]{\small $\tilde{C}_{t_1}$};

\end{tikzpicture}
\caption{An illustration of Theorem \ref{yangsheng2} for the case when $t\in (0,t_1]$. Stretching the shortest height $OC$ above while scaling to keep the area fixed. Then, $|\triangle_{A_tB_t\tilde{C}_t}|=|\triangle_{ABC}|$ and $D(\triangle_{A_tB_t\tilde{C}_t})$ is strictly increasing until $t=t_1$, at which time the triangle becomes isosceles.}
\label{fig:ctm1}
\end{figure}

\begin{figure}[htp]
\centering
\begin{tikzpicture}[scale = 2.5]

\fill (-0.8, 0) circle (0.02 ) node[below ] {\small$A$};
\fill (0,1.5) circle (0.02 ) node[right]{\small $C$};
\fill (0, 0) circle (0.02 ) node[below ] {\small$O$};
\fill (0.4, 0) circle (0.02 ) node[below ] {\small$B$};
\fill (0, 1.3) circle (0.02 ) node[right] {\small$C_t$};
\fill (0,1.1) circle (0.02 ) node[right] {\small$C_{t_2}$};

\draw[->] (-1.1,0)--(1,0) node[right] {$x$};
\draw[->] (0,-0.2)--(0,1.8) node[right] {$y$};

\draw (-0.8,0)--(0,1.5);
\draw[thick,dotted] (0,1.1)--(-0.8,0);
\draw[thick,dotted] (0,1.3)--(-0.8,0);
\draw[thick,dotted] (0,1.1)--(0.4,0);
\draw[thick,dotted] (0,1.3)--(0.4,0);
\draw (-0.8,0)--(0,1.5);
\draw (0.4,0)--(0,1.5);


\draw[->,thick] (1.0, 0.9)--(2.8,0.9);

\node[above] at (1.9, 1.0) {\small $\Omega_t=\sqrt{\tfrac{|OC|}{|OC_t|}}\triangle_{ABC_t}=\triangle_{A_tB_t\tilde{C}_{t}}$};


\fill (-0.8*1.074+4, 0) circle (0.02 ) node[below ] {\small$A_t$};
\fill (-0.8*1.168+4, 0) circle (0.02 ) node[below left] {\small$A_{t_2}$};
\fill (0.4*1.168+4, 0) circle (0.02 ) node[above] {\small$B_{t_2}$};
\fill (4, 0) circle (0.02 ) node[below ] {\small$O$};
\fill (4+0.4*1.074, 0) circle (0.02 ) node[below ] {\small$B_t$};
\fill (4, 1.074*1.3) circle (0.02 ) node[right] {\small$\tilde{C}_t$};
\fill (4,1.168*1.1) circle (0.02 ) node[right] {\small$\tilde{C}_{t_2}$};
\draw [thick, dotted](-0.8*1.074+4, 0) -- (4, 1.074*1.3);
\draw [thick, dotted](-0.8*1.168+4, 0) -- (4, 1.168*1.1);
\draw [thick, dotted](0.4*1.074+4, 0) -- (4, 1.074*1.3);
\draw [thick, dotted](0.4*1.168+4, 0) -- (4, 1.168*1.1);

\draw[->] (2.6,0)--(4.6,0) node[right] {$x$};
\draw[->] (4,-0.2)--(4,1.8) node[right] {$y$};

\end{tikzpicture}
\caption{An illustration of Theorem \ref{yangsheng2'} for the case when $0<t\le t_2$. Compressing below the tallest height $OC$ while scaling to keep the area fixed. Then, $|\triangle_{A_tB_t\tilde{C}_t}|=|\triangle_{ABC}|$, and $D(\triangle_{A_tB_t\tilde{C}_t})$ is strictly increasing until $t=t_2$, at which time the triangle becomes isosceles.}
\label{fig:ctm2}
\end{figure}

Therefore, combining the flow evolutions described in Theorem \ref{yangsheng2} and Theorem \ref{yangsheng2'}, we have found an area-preserving deformation path along which $D(\cdot)$ is strictly increasing, evolving any arbitrary non-equilateral triangle first into an isosceles triangle by stretching the shortest height, and then further into an equilateral one by compressing the tallest height. This answers Question \ref{kq1} for triangles, and also gives another yet simpler proof of the fact (see \cite[Theorem 1.1]{BCT}) that equilateral triangles uniquely maximize the Riesz-type nonlocal energy $D(\cdot)$, without the traditional limit process of step-by-step Steiner symmetrizations.

\vskip 0.3cm
A combination of Theorem \ref{yangsheng2} and Theorem \ref{yangsheng2'} immediately implies the following new monotonicity results on isosceles triangles:

\begin{corollary}
\label{coro}
    Let $I(\alpha)$ be the isosceles triangle with aperture $\alpha$ and given area. Then, $D(I(\alpha))$ is strictly increasing when $\alpha \in (0,\pi/3)$, and is strictly decreasing when $\alpha \in (\pi/3,\pi)$.
\end{corollary}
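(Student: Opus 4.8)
The plan is to derive the corollary by specializing the two theorems to the symmetric configuration $A=(-a,0)$, $B=(a,0)$, $C=(0,h)$ (so that $|AC|=|BC|$), in which case the flow preserves the isosceles symmetry, and then to translate the monotonicity of $D(\Omega_t)$ in $t$ into monotonicity in the aperture $\alpha$. The point is that when $a=b$ the rescaled vertex data $A_t=\lambda_t A$, $B_t=\lambda_t B$, $\tilde C_t=\lambda_t C_t$ (with $C_t=(1\pm t)C$ on the $y$-axis and $\lambda_t=\sqrt{|OC|/|OC_t|}$) stays symmetric about the $y$-axis, so each $\Omega_t$ is again isosceles and, by the built-in area-preserving scaling, has the prescribed area; hence $\Omega_t=I(\alpha_t)$ for a suitable aperture $\alpha_t$.

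First, for $\alpha\in(0,\pi/3)$ I would invoke Theorem \ref{yangsheng2'}. When $\alpha<\pi/3$ the triangle $I(\alpha)$ has base $AB$ as its strictly shortest side and is non-obtuse, so the hypotheses $|AB|<|BC|\le|AC|$ hold with $|BC|=|AC|$. With $a=b$, the aperture is $\alpha_t=2\arctan\!\big(a/((1-t)h)\big)$, which is strictly increasing in $t$; as $t$ ranges over $(-\infty,t_2)$ it sweeps $(0,\pi/3)$, and at $t=t_2$ the identity $|BA|=|BC_{t_2}|$ forces $(1-t_2)h=\sqrt3\,a$, i.e.\ the equilateral triangle $\alpha=\pi/3$. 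Since Theorem \ref{yangsheng2'} gives that $t\mapsto D(\Omega_t)=D(I(\alpha_t))$ is strictly increasing and $\alpha_t$ is strictly increasing, $D(I(\alpha))$ is strictly increasing on $(0,\pi/3)$.

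Symmetrically, for $\alpha\in(\pi/3,\pi)$ I would use Theorem \ref{yangsheng2} with $a=b$: here $AB$ is the strictly longest side, so $|AB|>|BC|\ge|AC|$ holds, and now $\alpha_t=2\arctan\!\big(a/((1+t)h)\big)$ is strictly \emph{decreasing} in $t$, sweeping $(\pi/3,\pi)$ as $t$ ranges over $(-1,t_1)$, with the equilateral triangle attained at $t=t_1$ (where $(1+t_1)h=\sqrt3\,a$). Because $D(\Omega_t)$ is strictly increasing in $t$ while $\alpha_t$ decreases, $D(I(\alpha))$ is strictly decreasing on $(\pi/3,\pi)$.

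The analytic content is entirely contained in Theorems \ref{yangsheng2} and \ref{yangsheng2'}; for the corollary the only steps to check are bookkeeping: that the symmetric initial data $a=b$ is admissible in each theorem, that the vertical flow together with the homothety preserves the isosceles class so that $\Omega_t=I(\alpha_t)$ with fixed area, and that $t\mapsto\alpha_t$ is a strictly monotone bijection of the relevant $t$-interval onto $(0,\pi/3)$ (resp.\ $(\pi/3,\pi)$) with the equilateral configuration at the endpoint. The one point warranting slight care is the orientation of the monotonicity: stretching the height ($t\uparrow$ in Theorem \ref{yangsheng2}) decreases $\alpha$, so the increasing behavior of $D$ in $t$ must be read as decreasing behavior in $\alpha$, which is precisely what produces the sign change of $\tfrac{d}{d\alpha}D(I(\alpha))$ across the equilateral value $\alpha=\pi/3$.
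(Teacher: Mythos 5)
Your proposal is correct and follows essentially the same route as the paper: the authors also obtain the corollary by specializing Theorem \ref{yangsheng2} to the case $|AB|>|BC|=|AC|$ and Theorem \ref{yangsheng2'} to the case $|AB|<|BC|=|AC|$, noting that the height-stretching (resp.\ compressing) flow preserves the isosceles class and reverses (resp.\ preserves) the orientation of the aperture parameter. Your explicit parametrization $\alpha_t=2\arctan\bigl(a/((1\pm t)h)\bigr)$ and the endpoint check at $t_1,t_2$ just make precise the bookkeeping that the paper leaves as a brief remark at the end of Section 3.
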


Similar results to Corollary \ref{coro} have been proved by Siudeja for Dirichlet boundary shape functionals, such as the torsional rigidity or first eigenvalue of Dirichlet Laplacian, see \cite{SB10}. His argument is via both the continuous Steiner symmetrization argument and the Steiner symmetrization argument. Such monotonicity results on nonlocal energies over isosceles triangles have not been established in the previous literature.

\vskip 0.2cm

The next result gives the strict decreasing property of $D(\cdot)$ when a triangle is strictly deviating from an isosceles one, by fixing an angle and the area while making the ratio of the longer leg to the shorter leg near that angle increasingly larger. The proof is motivated by that of Theorem \ref{yangsheng2}. Instead of continuously stretching the height, we can continuously stretch one leg while fixing the area, which generates another flow along which $D(\cdot)$ is monotone. The transformation is illustrated in Figure \ref{fig:ctm}.
\begin{theorem}
    \label{yangsheng1}
Let $\Omega_{\alpha,q}$ denote the triangle of a given area, with one angle equal to $\alpha$ and $q\ge 1$ being the ratio of the two legs near the angle. Then, for any fixed $\alpha \in (0,\pi)$, $D(\Omega_{\alpha,q})$ is a strictly decreasing function with respect to $q\ge 1$.
\end{theorem}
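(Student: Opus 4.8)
The plan is to realize the family $\{\Omega_{\alpha,q}\}_{q\ge1}$ as the image of a fixed reference triangle under an explicit area-preserving flow and to reduce the monotonicity of $D$ to a single reflection comparison, exactly in the spirit of Theorem~\ref{yangsheng2}. Place the vertex carrying the angle $\alpha$ at the origin $O$, with the two legs pointing along fixed unit vectors $u_1,u_2$ with $\angle(u_1,u_2)=\alpha$, and write a generic point in oblique coordinates as $s\,u_1+r\,u_2$. Fixing the area amounts to fixing the product of the leg lengths, so I would set the legs to have lengths $a(q)=\sqrt{cq}$ and $b(q)=\sqrt{c/q}$ (with $c$ fixed by the prescribed area), put $P=a(q)u_1$, $Q=b(q)u_2$, and take the flow $F_q$ to be the linear map which in oblique coordinates is the diagonal scaling fixing the leg directions. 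Its Jacobian is constant in $q$, so the flow is area-preserving, and differentiating the position of a material point in $q$ gives the Eulerian velocity
\[
\eta(s\,u_1+r\,u_2)=\frac{1}{2q}\bigl(s\,u_1-r\,u_2\bigr).
\]

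Next I would apply the standard shape-derivative formula for the nonlocal energy,
\[
\frac{d}{dq}D(\Omega_q)=2\int_{\partial\Omega_q}u_{\Omega_q}(x)\,(\eta\cdot n)(x)\,d\mathcal H^1(x),\qquad u_{\Omega_q}(x)=\int_{\Omega_q}K(|x-y|)\,dy.
\]
The crucial simplification is that $\eta$ is tangent to both legs: on $r=0$ it equals $\tfrac{s}{2q}u_1$ and on $s=0$ it equals $-\tfrac{r}{2q}u_2$, so $\eta\cdot n\equiv0$ there and only the third side $PQ$ contributes. Parametrizing $PQ$ by $x(\tau)=(1-\tau)P+\tau Q$, a short computation gives $(\eta\cdot n)\,d\mathcal H^1=\tfrac{a b\sin\alpha}{2q}(1-2\tau)\,d\tau$, which changes sign exactly at the midpoint $M$ (outward near the long-leg endpoint $P$, inward near the short-leg endpoint $Q$). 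Hence the sign of $\frac{d}{dq}D(\Omega_q)$ is that of $\int_0^1 u_{\Omega_q}(x(\tau))(1-2\tau)\,d\tau$, and pairing $\tau=\tfrac12\mp\theta$ reduces everything to showing that $u_{\Omega_q}(x(\tfrac12-\theta))<u_{\Omega_q}(x(\tfrac12+\theta))$ for $\theta\in(0,\tfrac12)$ and $q>1$; i.e. the potential is strictly larger at the point on the short-leg side of $M$.

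This last inequality is where the reflection argument enters and is the main point. Let $\ell$ be the perpendicular bisector of $PQ$; then $x(\tfrac12-\theta)$ and $x(\tfrac12+\theta)$ are reflections of each other across $\ell$, and $R_\ell$ swaps $P\leftrightarrow Q$ while sending $\Omega_q=\triangle OPQ$ to the triangle $\triangle O'PQ$ with the same base $PQ$ and apex $O'=R_\ell O$ at the same height. Writing $x=x(\tfrac12-\theta)$, $x'=R_\ell x=x(\tfrac12+\theta)$ and changing variables $y\mapsto R_\ell y$ on $R_\ell\Omega_q$, one gets
\[
u_{\Omega_q}(x)-u_{\Omega_q}(x')=\int_{A^+}\bigl[K(|x-y|)-K(|x'-y|)\bigr]\,dy,\qquad A^+:=\Omega_q\setminus R_\ell\Omega_q.
\]
The geometric heart is to check that, for $q>1$, $A^+$ lies entirely on the short-leg side of $\ell$ (the side opposite $x$); this follows by comparing the bounding lines through $P$ and through $Q$ and using that $O$ leans toward the short leg. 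Granting this, every $y\in A^+$ is strictly on the opposite side of $\ell$ from $x$, so $|x-y|>|x'-y|$, and since $K$ is strictly decreasing the integrand is strictly negative; as $A^+$ has positive measure for $q>1$, the strict inequality follows and $\frac{d}{dq}D(\Omega_q)<0$. The derivative vanishes at $q=1$ by the symmetry of the isosceles triangle, whence $D(\Omega_{\alpha,q})$ is strictly decreasing on $[1,\infty)$. The main obstacle I anticipate is the rigorous localization of the symmetric difference $A^+$ (together with the regularity and integrability bookkeeping for the possibly singular kernel $K$, needed to justify the shape-derivative formula and the boundary evaluation of $u_{\Omega_q}$), rather than the explicit differential-geometric computations.
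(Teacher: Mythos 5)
Your proposal is correct and follows essentially the same route as the paper's proof (Theorem \ref{yangsheng1'}): your oblique-coordinate diagonal scaling is exactly the paper's flow $F_t=\tfrac{1}{\sqrt{1+t}}G_t$ with $q=1+t$, your velocity $\eta=\tfrac{1}{2q}(s\,u_1-r\,u_2)$ coincides with theirs, and both arguments reduce to the vanishing of $\eta\cdot\nu$ on the two legs, the sign change of the weight at the midpoint of the opposite side, and a reflection comparison of $V_{\Omega_q}$ across that midpoint. Your localization of $A^+=\Omega_q\setminus R_\ell\Omega_q$ on the short-leg side of the perpendicular bisector is a correct (and easily verified) rephrasing of the paper's Proposition \ref{symmetrycomparison}, so no genuine gap remains.
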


Similar type of result was also first proved by Siudeja \cite{SB10} on the first eigenvalue problem of the Dirichlet Laplacian, under the additional assumption that $\alpha$ is the smallest angle. The proof there is by the ingenious use of polar symmetrization. Later, by delicate applications of the continuous Steiner symmetrization argument, Solynin \cite{Solynin20} proves the strict monotonicity of the first eigenvalue without the smallest assumption on the fixed angle. The monotonicity of the Riesz-type nonlocal energy on $D(\Omega_{\alpha, q})$ has not been addressed before. Our proof via the flow method is also significantly different from previous ones used in \cite{SB10} and \cite{Solynin20}. Such ideas have also been recently applied by us to study Dirichlet shape optimization problems, see \cite{HLXY}. 

Note that Theorem \ref{yangsheng1} enables us to compare the magnitudes of $D(\cdot)$ on two triangles, if they have one same angle and the same area. It is then a natural question of comparing the magnitudes of $D(\cdot)$ on two triangles if they have one same side and the same area. Motivated by \cite{CHVY} and \cite{BCT}, we prove the following result, which gives another way of continuously deviating an isosceles triangle while $D(\cdot)$ is strictly decreasing during the deviation process. 
\begin{theorem}
\label{zhenxi1}
    Let $T_{l,q}$ denote the triangle of a given area, with one side equal to $l$ and $q\ge 1$ being the ratio of the other two sides. Then, for any fixed $l>0$, $D(T_{l,q})$ is a strictly decreasing function with respect to $q\ge 1$.
\end{theorem}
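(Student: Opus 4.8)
The plan is to realize the deformation as an explicit area-preserving shear flow issuing from the isosceles configuration, reduce the monotonicity to the sign of a one-parameter derivative, and then settle that sign by a one-dimensional reflection (convolution-monotonicity) inequality.

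First I would fix coordinates adapted to the constraints. Writing the area as $S$ and placing the fixed side on the $x$-axis with $A=(0,0)$ and $B=(l,0)$, the area constraint forces the apex $C=(c,h)$ to lie on the fixed horizontal line $y=h$ with $h=2S/l$. By the reflection $x\mapsto l-x$ we may assume $c\ge l/2$, the isosceles case being $c=l/2$. Along the natural branch, the ratio $q$ of the two non-fixed sides increases strictly with the apex displacement, so it suffices to prove that $D(T_{l,q})$ strictly decreases as $c$ increases past $l/2$. The flow realizing this is the shear $\Theta_\kappa(\xi,y)=(\xi+\kappa y,\,y)$ with $\kappa=(2c-l)/(2h)\ge 0$: it is generated by the horizontal field $(y,0)$, is area preserving, fixes the base, and maps the isosceles triangle $T:=T_{l/2}$ onto $T_c$. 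Setting $\mathcal D(\kappa):=D(\Theta_\kappa(T))$, the theorem reduces to $\mathcal D'(\kappa)<0$ for $\kappa>0$.

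Next I would compute $\mathcal D'(\kappa)$. Since $\Theta_\kappa$ is measure preserving,
\[
\mathcal D(\kappa)=\int_T\int_T K\!\left(\sqrt{(a+\kappa b)^2+b^2}\right)dz\,dw,\qquad a=z_1-w_1,\ b=z_2-w_2,
\]
and, differentiating under the integral (justified by the integrability hypotheses on $K$),
\[
\mathcal D'(\kappa)=\int_T\int_T K'\!\left(\sqrt{(a+\kappa b)^2+b^2}\right)\frac{(a+\kappa b)\,b}{\sqrt{(a+\kappa b)^2+b^2}}\,dz\,dw .
\]
The crucial structural point is that $T$ is symmetric about $x=l/2$, so for fixed heights $z_2=s$, $w_2=t$ the horizontal variable $a=z_1-w_1$ has a symmetric law: its density $\rho_{s,t}$ is the convolution of the two centered uniform densities on the cross-sections at heights $s$ and $t$, hence even and nonincreasing in $|a|$. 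Writing $w(\cdot)$ for the cross-sectional width, integrating over the horizontal variables first, and using $b=s-t$, $\delta=\kappa b$, the inner double integral equals $b\,w(s)w(t)\,\partial_\delta\Xi(\delta;b)$, where $\Xi(\delta;b)=(\rho_{s,t}\ast\Phi_b)(\delta)$ and $\Phi_b(x)=K(\sqrt{x^2+b^2})$.

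The heart of the argument is then a reflection lemma: if $\Phi$ is even and nonincreasing in $|x|$, then convolution with a centered uniform density $U_{[-u,u]}$ preserves these properties, because
\[
\frac{d}{d\delta}\Big(U_{[-u,u]}\ast\Phi\Big)(\delta)=\frac{1}{2u}\bigl(\Phi(\delta+u)-\Phi(\delta-u)\bigr)\le 0\qquad(\delta\ge 0),
\]
the inequality being exactly the reflection comparison $|\delta+u|\ge|\delta-u|$ combined with the monotonicity of $\Phi$. Applying this once for each cross-section shows $\Xi(\cdot;b)$ is even and nonincreasing on $[0,\infty)$, strictly so since $K$ is strictly decreasing. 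Hence $\partial_\delta\Xi(\delta;b)$ has sign opposite to $\delta=\kappa b$, giving $b\,\partial_\delta\Xi(\kappa b;b)\le 0$ pointwise, with strict inequality whenever $s\neq t$; integrating $b\,w(s)w(t)\,\partial_\delta\Xi$ over $[0,h]^2$ yields $\mathcal D'(\kappa)<0$ for every $\kappa>0$. The step I expect to be the main obstacle is precisely this sign analysis: a naive symmetrization splits $\mathcal D'$ into a manifestly negative ``leaning'' term and a competing positive correction that cannot be compared termwise for a general decreasing kernel. The device that resolves it—integrating out the horizontal variables first and recognizing the result as the $\delta$-derivative of a convolution of symmetric unimodal profiles—is what converts the two-dimensional competition into the clean one-dimensional reflection inequality. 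Secondary points to verify are the differentiation under the integral sign, the strictness of the final inequality, and the monotone dependence of $q$ on $\kappa$ used in the reduction.
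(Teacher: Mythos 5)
Your proposal is essentially the paper's own argument: the paper (Theorem \ref{zhenxi1'}) likewise realizes the deformation as a horizontal shear moving the apex parallel to the fixed base, slices both copies of the triangle horizontally, and for each pair of heights reduces the sign of the derivative to a one--dimensional reflection statement --- there phrased as cancelling the shifted rectangle against its reflection through the origin using the oddness of $K_l'$, which is exactly your observation that convolving the symmetric, strictly unimodal profile $\Phi_b$ with centered uniform densities preserves symmetry and strict unimodality. The one caveat, which you flag and which the paper shares, is the reduction step: $q$ is \emph{not} globally monotone in the apex displacement (it returns to $1$ as the apex recedes to infinity), so identifying the shear family with $\{T_{l,q}\}_{q\ge 1}$ requires restricting to the branch on which $q$ increases.
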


In summary, Theorem \ref{yangsheng2}, Theorem \ref{yangsheng1} and Theorem \ref{zhenxi1} together give three different ways of smoothly evolving an arbitrary non-isosceles triangle into an isosceles one, and during the evolutions, $D(\cdot)$ is always strictly increasing. 

\vskip 0.2cm

Next, we consider Question \ref{kq1} on quadrilaterals. It seems to us that the monotonicity of $D(\cdot)$ under continuous Steiner symmetrization has not been stated in the previous literature. Highly motivated by \cite{BCT} and \cite{CHVY}, we prove a slightly stronger result for a class of quadrilaterals: $D(\cdot)$ is strictly increasing under a partial continuous Steiner symmetrization, see Proposition \ref{T1} and Figure \ref{type1} in section 6. For the other quadrilaterals, the monotonicity under the partial symmetrization in Proposition \ref{T1} is in general not true. Nevertheless, the  monotonicity property of $D(\cdot)$ always holds for all quadrilaterals under the continuous Steiner symmetrization about the line perpendicular to one of the diagonals, as stated in Theorem \ref{quadritheorem} below.
 
\begin{theorem}
\label{quadritheorem}
    Let $\Omega_t$ be the continuous family of quadrilaterals with vertices $A_t=(x_A(1-t),y_A)$, $B=(a,0)$, $C_t=(x_C(1-t),y_C)$ and $D=(-a,0)$, where $a>0$, $x_A, x_C\in \mathbb{R}$ and $y_Ay_C<0$. If both $x_A$ and $x_C$ are not zero, then $D(\Omega_t)$ is a strictly increasing function for $0\le t\le 1$.
\end{theorem}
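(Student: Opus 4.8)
The plan is to realize $\Omega_t$ as the image of $\Omega_0$ under an explicit area–preserving flow and to reduce the sign of the shape derivative of $D$ to a one–dimensional reflection inequality, exactly in the spirit of the proof of Theorem~\ref{yangsheng2}. Since $B=(a,0)$ and $D=(-a,0)$ stay fixed on the diagonal (the $x$–axis) while $A_t,C_t$ only slide horizontally, the diagonal $BD$ cuts $\Omega_t$ into the upper triangle $\triangle DA_tB$ and the lower triangle $\triangle DC_tB$, each with fixed base $[-a,a]$ and fixed height $|y_A|$, $|y_C|$; hence $|\Omega_t|$ is constant. I will generate the family by the horizontal shear $F_t$ whose Eulerian velocity is $V_t(x,y)=(v(y),0)$ with $v(y)=-x_Ay/y_A$ for $y>0$ and $v(y)=-x_Cy/y_C$ for $y<0$ (so $F_t$ has Jacobian one and $F_t(\Omega_0)=\Omega_t$). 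Two facts drive everything: the horizontal slice $\Omega_t\cap\{\,\cdot\,=y\}$ is a single interval of half–width $L(y)$ \emph{independent of $t$}, while its midpoint is $m_t(y)$; and, by construction, $v(y)=-\tfrac{1}{1-t}\,m_t(y)$ for every $y$.

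Next I compute the shape derivative. Pulling back by the Jacobian‑one map $F_t$ and differentiating the integrand (equivalently, using the Hadamard boundary formula as in the proof of Theorem~\ref{yangsheng2}) gives
\[
\frac{d}{dt}D(\Omega_t)=\iint_{\Omega_t\times\Omega_t}K'(|p-q|)\,\frac{(p_1-q_1)\bigl(v(p_2)-v(q_2)\bigr)}{|p-q|}\,dp\,dq .
\]
I then slice in the vertical variable: writing $p=(p_1,s)$, $q=(q_1,\tau)$, $h=s-\tau$ and $P(\zeta):=K(\sqrt{\zeta^2+h^2})$, the inner horizontal integral over the two slices $I_s,I_\tau$ equals $\int_{I_s}\!\int_{I_\tau}P'(p_1-q_1)\,dq_1\,dp_1$. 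Centering both slices at their midpoints converts this into $4L(s)L(\tau)\,(P*\rho)'(\Delta)$, where $\Delta=m_t(s)-m_t(\tau)$ and $\rho$ is the even, unimodal (trapezoidal) density of the difference of two independent centered uniform variables. Using $v(s)-v(\tau)=-\tfrac1{1-t}\Delta$, this collapses to
\[
\frac{d}{dt}D(\Omega_t)=\frac{1}{1-t}\iint 4L(s)L(\tau)\,\bigl(-\Delta\,(P*\rho)'(\Delta)\bigr)\,ds\,d\tau .
\]

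The heart of the proof is then the reflection inequality $-\Delta\,(P*\rho)'(\Delta)\ge0$, i.e. that $P*\rho$ is strictly decreasing on $(0,\infty)$. Because $\rho$ is itself a convolution of two centered boxes, it suffices to show $\zeta\mapsto(P*\mathbf 1_{[-L,L]})(\zeta)$ is strictly decreasing on $(0,\infty)$; its derivative is $P(\Delta+L)-P(\Delta-L)$, which is $<0$ for $\Delta>0$ since $(\Delta+L)^2>(\Delta-L)^2$ and $K$ is strictly decreasing. This is precisely the statement that reflecting a slice toward the axis of symmetry strictly increases its interaction with a fixed slice. Iterating once more for the second box gives $(P*\rho)'(\Delta)<0$ for $\Delta>0$, so the integrand above is nonnegative and is strictly positive wherever $\Delta\neq0$. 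Since $x_A,x_C\neq0$ and $t<1$, we have $\Delta(s,\tau)=(1-t)x_A(s-\tau)/y_A\neq0$ for a.e. pair with $s,\tau>0$, $s\neq\tau$; hence $\frac{d}{dt}D(\Omega_t)>0$ on $[0,1)$, and continuity of $t\mapsto D(\Omega_t)$ upgrades this to strict monotonicity on the closed interval $[0,1]$ (note $\Delta\sim(1-t)$ forces the derivative to vanish at $t=1$, consistent with the symmetric configuration being critical).

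The main obstacle is the rigorous justification of the domain‑form derivative and of the term‑by‑term slicing, because for the Riesz kernel with $\alpha\in[1,2)$ the integrand $K'(|p-q|)(p_1-q_1)/|p-q|$ is not absolutely integrable near the diagonal $p=q$. The saving feature is exactly the oddness in $p_1-q_1$ exploited above: once the heights differ ($h=s-\tau\neq0$), $P$ is smooth and the inner integral is harmless, and the genuine singularity sits only on the null set $\{h=0\}$. I would therefore differentiate the Jacobian‑one pullback $\iint K(|F_tx-F_ty|)\,dx\,dy$ — where $F_tx\neq F_ty$ whenever $x\neq y$ — and control the limit $h\to0$ together with the behaviour near the fixed vertices $B,D$ using the signed (monotone) structure rather than absolute values; alternatively one passes through the Hadamard boundary form, where the potential $u_{\Omega_t}(x)=\int_{\Omega_t}K(|x-y|)\,dy$ is continuous under the hypothesis $\int_0^1K(r)r\,dr<\infty$. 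All the geometric content is in the reflection step; this analytic justification is the only delicate point.
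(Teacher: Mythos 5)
Your proposal is correct and follows essentially the same route as the paper: slice horizontally, reduce $\tfrac{d}{dt}D(\Omega_t)$ to the $t$-derivative of the interaction of pairs of horizontal slices, and show by a reflection argument that this interaction strictly decreases as the slice midpoints separate — your monotonicity of $P*\rho$ is exactly the paper's rectangle decomposition $R_t=E\cup E'\cup\Sigma$ rewritten in convolution language, and both treatments leave the justification of differentiating under the integral at the same level of rigor. The only real difference is organizational: the paper splits into the cases $x_Ax_C>0$ and $x_Ax_C\le 0$ (Propositions \ref{T2} and \ref{T1}) and only remarks afterwards that the two partial symmetrizations can be run simultaneously, whereas your identity $v(s)-v(\tau)=-\Delta/(1-t)$ handles all slice pairs, and hence both cases, in one stroke.
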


When the vertex $A=A_0$ changes to $A_1$ and the vertex $C=C_0$ changes to $C_1$ at time $t=1$, the quadrilateral turns into a kite. Then, we can similarly move the vertices $B$ and $D$ up or down to turn the kite into a rhombus. By Theorem \ref{quadritheorem}, $D(\cdot)$ is strictly increasing along the continuous deformation. Up until the point of deformation into the rhombus, the monotonicity of $D(\cdot)$ is essentially through the continuous Steiner symmetrization process. However, beginning with the rhombus, motivated by Theorem \ref{yangsheng2}, we employ another transformation as follows: we continuously compress the longer diagonal of the rhombus until the two diagonals are equal, while always scaling to fix the area during the deformation. This step does not involve any continuous symmetrization argument, and we still have the monotonicity result, as stated below.

\begin{theorem}
\label{rhombustheoremc}
    Let $\Omega_q$ be the rhombus with a given area, where $q\ge 1$ is the ratio of the lengths
 of the diagonals of the rhombus. Then, $D(\Omega_q)$ is a strictly decreasing function for $q\ge 1$.
\end{theorem}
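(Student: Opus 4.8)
The plan is to realize the one–parameter family $\{\Omega_q\}$ as the orbit of an area–preserving flow and to reduce the monotonicity of $D$ to the sign of a single shape derivative, exactly in the spirit of Theorem \ref{yangsheng2}. I would place the rhombus so that its diagonals lie on the axes, with vertices $(\pm a,0)$ and $(0,\pm b)$, $a>b>0$, so that the longer diagonal is horizontal and $q=a/b$. Fixing the area $2ab$, I compress the longer diagonal by the linear, volume–preserving maps $F_s=\mathrm{diag}(e^{-s},e^{s})$, whose generating velocity field is $\eta(x)=(-x_1,x_2)$ (divergence free, so the area is automatically preserved). Then $\Omega_s:=F_s(\Omega)$ runs through all rhombi of the given area with ratio $q(s)=e^{-2s}q$ strictly decreasing, and it suffices to prove $\frac{d}{ds}D(\Omega_s)>0$ for as long as $a>b$, which is equivalent to the asserted strict decrease of $q\mapsto D(\Omega_q)$.

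Pulling back to the fixed domain (Jacobian $1$) and differentiating, the shape derivative takes the symmetric form
\begin{equation*}
\frac{d}{ds}D(\Omega_s)=\int_{\Omega_s}\int_{\Omega_s}\frac{-K'(|x-y|)}{|x-y|}\big[(x_1-y_1)^2-(x_2-y_2)^2\big]\,dx\,dy,
\end{equation*}
since $\eta(x)-\eta(y)=(-(x_1-y_1),\,x_2-y_2)$ and thus $(x-y)\cdot(\eta(x)-\eta(y))=(x_2-y_2)^2-(x_1-y_1)^2$. Writing $\phi(r)=-K'(r)/r>0$, $w=x-y$, and introducing the autocorrelation $A(w)=|\Omega_s\cap(\Omega_s+w)|$, this becomes
\begin{equation*}
\frac{d}{ds}D(\Omega_s)=\int_{\mathbb{R}^2}\phi(|w|)\,(w_1^2-w_2^2)\,A(w)\,dw .
\end{equation*}
Because $\phi(|w|)$ is invariant and $w_1^2-w_2^2$ is odd under the coordinate swap $\tau(w_1,w_2)=(w_2,w_1)$, while $A(\tau w)=A(w_2,w_1)$, symmetrizing yields
\begin{equation*}
\frac{d}{ds}D(\Omega_s)=\int_{\{|w_1|>|w_2|\}}\phi(|w|)\,(w_1^2-w_2^2)\,\big[A(w_1,w_2)-A(w_2,w_1)\big]\,dw .
\end{equation*}
On this region the first two factors are positive, so everything reduces to the purely geometric claim that the horizontally elongated rhombus overlaps itself more under horizontal than under vertical shifts, namely $A(w_1,w_2)\ge A(w_2,w_1)$ whenever $|w_1|\ge|w_2|$.

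I expect this geometric inequality to be the main obstacle, and I would settle it by an explicit computation that makes the underlying reflection mechanism transparent. Since $\Omega_s=\mathrm{diag}(a,b)\,\Omega_0$ with $\Omega_0=\{|\xi_1|+|\xi_2|\le1\}$ the unit diamond, one has $A(w)=ab\,A_0(w_1/a,w_2/b)$, and a $45^\circ$ rotation turns the diamond into a square, giving the closed form
\begin{equation*}
A_0(\xi_1,\xi_2)=\tfrac12\,(2-|\xi_1+\xi_2|)_+\,(2-|\xi_1-\xi_2|)_+ .
\end{equation*}
Assuming $w_1\ge w_2\ge0$ (legitimate, as every factor is even in $w_1$ and in $w_2$) and $a>b$, the relevant arguments obey
\begin{equation*}
\Big|\tfrac{w_1}{b}+\tfrac{w_2}{a}\Big|\ge\Big|\tfrac{w_1}{a}+\tfrac{w_2}{b}\Big|,\qquad
\Big|\tfrac{w_1}{b}-\tfrac{w_2}{a}\Big|\ge\Big|\tfrac{w_1}{a}-\tfrac{w_2}{b}\Big|,
\end{equation*}
both coming from $1/b-1/a\ge0$ together with $w_1-w_2\ge0$ and $w_1+w_2\ge0$. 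As $t\mapsto(2-t)_+$ is nonnegative and nonincreasing, the product representing $A(w_2,w_1)$ is dominated termwise by the one representing $A(w_1,w_2)$, which is precisely the inequality; the first estimate is moreover strict on a set of positive measure (where $w_1>w_2>0$ with $|w|$ small), forcing $\frac{d}{ds}D(\Omega_s)>0$.

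Two technical points would remain. First, the differentiation of $D$ under the integral sign must be justified through the singular kernel; here the relevant weight is $\phi(|w|)|w|^2=-K'(|w|)\,|w|$, which for the prototype $K(r)=r^{-\alpha}$ behaves like $r^{-\alpha}$ and is integrable in the plane exactly when $\alpha\in(0,2)$, matching the standing hypotheses, so the computation parallels the shape-derivative analysis already used for the triangle theorems. Second, one should record that strictness propagates to the full $q$-interval, since $\Omega_s$ is a genuinely elongated rhombus ($a>b$) for every $q>1$. I would also note an alternative, symmetrization-free route to the key inequality: reflect the half of $\Omega_s$ lying beyond the perpendicular bisector of a side into the other half and compare the two potentials directly; this is the reflection argument in the same vein as Theorem \ref{yangsheng2}, but geometrically more delicate than the explicit autocorrelation identity above, which is why I would prefer the latter.
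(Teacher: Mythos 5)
Your proof is correct, but it takes a genuinely different route from the paper. The paper reduces Theorem \ref{rhombustheoremc} to Theorem \ref{equirh}: it runs essentially the same diagonal-compressing flow, but converts the shape derivative into a boundary integral via Theorem \ref{Tshapederivative}, uses the fourfold symmetry of the rhombus to reduce to a single side $C_t\tilde D_t$, and then signs the integrand by comparing $V_{\Omega_t}$ at points symmetric about the midpoint of that side through the reflection mechanism of Proposition \ref{symmetrycomparison}. You instead keep the derivative in bulk form, rewrite it as $\int\phi(|w|)(w_1^2-w_2^2)A(w)\,dw$ with the autocorrelation $A(w)=|\Omega_s\cap(\Omega_s+w)|$, antisymmetrize under the coordinate swap, and verify the resulting inequality $A(w_1,w_2)\ge A(w_2,w_1)$ for $|w_1|\ge|w_2|$ in closed form, using that the rhombus is the affine image $\mathrm{diag}(a,b)$ of a diamond whose autocorrelation is an explicit product of hat functions. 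I checked the key steps: the identity $(x-y)\cdot(\eta(x)-\eta(y))=(x_2-y_2)^2-(x_1-y_1)^2$, the formula $A_0(\xi)=\tfrac12(2-|\xi_1+\xi_2|)_+(2-|\xi_1-\xi_2|)_+$ (consistent with $A_0(0)=2$), and the two argument inequalities — for the second one you do need both $\frac{w_1}{b}-\frac{w_2}{a}\ge \frac{w_1}{a}-\frac{w_2}{b}$ and $\frac{w_1}{b}-\frac{w_2}{a}\ge \frac{w_2}{b}-\frac{w_1}{a}$, which follow from $(w_1+w_2)(\tfrac1b-\tfrac1a)\ge0$ and $(w_1-w_2)(\tfrac1a+\tfrac1b)\ge0$ respectively, so the absolute value is handled. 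What each approach buys: yours is self-contained, gives strictness completely explicitly, and bypasses the boundary comparison propositions entirely; its drawback is that it is tied to the rhombus (it needs the explicit autocorrelation of an affine square), whereas the paper's boundary-potential argument is the uniform mechanism it reuses for triangles and rectangles. Your treatment of differentiation under the integral through the singular kernel is at the same level of rigor as the paper's (which invokes a regularization of $K$ citing \cite{BCT}); the only point worth recording is that $\phi=-K'/r$ is merely nonnegative for a strictly decreasing $C^1$ kernel, but since $\int_{r_1}^{r_2}K'<0$ on every interval, $\phi>0$ on a set of radii of positive measure in any range, which combined with the open region where your first argument inequality is strict yields the strict sign of the derivative.
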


 As a particular application of Theorem \ref{quadritheorem} and Theorem \ref{rhombustheoremc}, we also obtain another proof of the fact (see \cite[Theorem 1.1]{BCT}) that the squares uniquely maximize the Riesz-type nonlocal energy $D(\cdot)$, by finding a continuous deformation which evolves an arbitrary non-square quadrilateral into a square, and along the evolution $D(\cdot)$ is strictly increasing. Such continuous deformation rapidly transforms a quadrilateral into a square, eliminating the step of taking the limit of iterations of Steiner symmetrization. 

Applying a similar flow vector field, we also prove the following monotonicity result on rectangles, which is also not based on symmetrization, and is new to our knowledge.
\begin{theorem}
\label{recderipos}
    Let $R_q$ be the rectangle with a given area, where $q\ge 1$ is the ratio of the length
 and the width of the rectangle. Then, $D(R_q)$ is a strictly decreasing function for $q\ge 1$.
\end{theorem}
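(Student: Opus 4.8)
The plan is to work with the centered representative $R_q=\left(-\tfrac{l}{2},\tfrac{l}{2}\right)\times\left(-\tfrac{w}{2},\tfrac{w}{2}\right)$ of the fixed area $A$, where $l=\sqrt{Aq}$ and $w=\sqrt{A/q}$, so that $l/w=q$ and increasing $q$ is exactly the area-preserving anisotropic-scaling flow generated on $R_q$ by the field $V=\tfrac{1}{2q}(\xi,-\eta)$ (the same type of field used for Theorem \ref{rhombustheoremc}). Rather than evaluating the shape derivative as a boundary integral, I would pass to difference variables $\delta=x-y$ and use the autocorrelation identity
\[
D(R_q)=\int_{\mathbb{R}^2}K(|\delta|)\,\big(l-|\delta_1|\big)_+\big(w-|\delta_2|\big)_+\,d\delta .
\]
The point of this form is that the entire $q$-dependence is carried by the Lipschitz, compactly supported tent weight, while the (possibly singular) kernel $K(|\delta|)$ is frozen; differentiating in $q$ therefore touches only the weight, and the resulting integrand is dominated by a multiple of $K(|\delta|)\mathbf{1}_{\{|\delta|\le C\}}$, which is integrable because $\int_0^1K(r)r\,dr<\infty$. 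This legitimizes differentiation under the integral sign and sidesteps the singularity of $K$ at the origin.

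Using $l'(q)=\tfrac{l}{2q}$ and $w'(q)=-\tfrac{w}{2q}$, I expect
\[
\frac{d}{dq}D(R_q)=\frac{1}{2q}\int_{B}K(|\delta|)\,\big(w|\delta_1|-l|\delta_2|\big)\,d\delta,\qquad B=(-l,l)\times(-w,w),
\]
so the theorem reduces to showing this integral is negative for $q>1$. Rescaling $\delta_1=l\sigma_1$, $\delta_2=w\sigma_2$ onto the fixed square $Q=(-1,1)^2$ and factoring out $l^2w^2$ gives
\[
\frac{d}{dq}D(R_q)=-\frac{l^2w^2}{2q}\,G,\qquad G:=\int_{Q}K\!\Big(\sqrt{l^2\sigma_1^2+w^2\sigma_2^2}\Big)\,\big(|\sigma_2|-|\sigma_1|\big)\,d\sigma,
\]
so it remains only to prove $G>0$.

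Here I would invoke the reflection across the diagonal of $Q$, namely the measure-preserving swap $\tau(\sigma_1,\sigma_2)=(\sigma_2,\sigma_1)$. Averaging $G$ against $G\circ\tau$ yields
\[
2G=\int_{Q}\big(|\sigma_2|-|\sigma_1|\big)\Big[K\!\big(\sqrt{l^2\sigma_1^2+w^2\sigma_2^2}\big)-K\!\big(\sqrt{l^2\sigma_2^2+w^2\sigma_1^2}\big)\Big]\,d\sigma .
\]
The difference of the two radial arguments equals $(l^2-w^2)(\sigma_1^2-\sigma_2^2)$; since $l>w$ when $q>1$ and $K$ is strictly decreasing, the bracket has the same sign as $\sigma_2^2-\sigma_1^2$, i.e.\ the same sign as $|\sigma_2|-|\sigma_1|$. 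Hence the integrand is pointwise nonnegative and is strictly positive on the full-measure set $\{|\sigma_1|\ne|\sigma_2|\}$, so $G>0$ and $\tfrac{d}{dq}D(R_q)<0$. At $q=1$ the bracket vanishes identically, matching the fact that the square is a critical point.

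The step I expect to be genuinely delicate---and the reason I route through difference variables---is making the sign manifest for \emph{every} admissible kernel rather than just the Riesz prototype $r^{-\alpha}$. The naive alternative, writing $\tfrac{d}{dq}D$ as a boundary integral of the potential $u_{R_q}$ and comparing its averages over the long edge and the short edge by reflecting across the perpendicular bisector of two corresponding boundary points, does not close: the reflected short-edge portion of $R_q$ is not contained in $R_q$ (the corner near $(\tfrac{l}{2},\tfrac{w}{2})$ escapes), so that comparison fails to be sign-definite pointwise. The autocorrelation representation is exactly what converts this awkward edge comparison into the transparent swap above, in which the anisotropy factor $(l^2-w^2)$ and the weight $|\sigma_2|-|\sigma_1|$ are automatically matched in sign; the only remaining item is the routine differentiation-under-the-integral justification already noted.
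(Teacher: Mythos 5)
Your proof is correct, and it takes a genuinely different route from the paper's. The paper runs Theorem \ref{recderipos} through the same machinery as the other results: the flow field $\eta(t,x,y)=\tfrac{1}{2(1+t)}(-x,y)$, the boundary-integral evolution formula of Theorem \ref{Tshapederivative}, and then a reduction of the sign to Proposition \ref{not-equidistribution-thmintro}, which states that the average of $V_\Omega$ over the long side of a rectangle strictly exceeds its average over the short side. You instead use the covariogram representation $D(R_q)=\int_{\mathbb{R}^2}K(|\delta|)(l-|\delta_1|)_+(w-|\delta_2|)_+\,d\delta$, differentiate the tent weight (which I have checked: with $l'=l/2q$, $w'=-w/2q$ the integrand becomes $\tfrac{1}{2q}(w|\delta_1|-l|\delta_2|)$ on the support), rescale to the unit square, and symmetrize under the coordinate swap so that the anisotropy factor $(l^2-w^2)(\sigma_1^2-\sigma_2^2)$ and the weight $|\sigma_2|-|\sigma_1|$ are matched in sign. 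Both arguments ultimately exploit the diagonal symmetry of the square, but yours does so in difference variables and is entirely self-contained and elementary: it bypasses the shape-derivative formula and the potential $V_\Omega$ altogether, and the integrability justification is immediate from $\int_0^1 K(r)r\,dr<\infty$. The trade-off is that the factorization into tents is special to rectangles, so your method does not port to the rhombus or triangle cases, whereas the paper's route is uniform across all its theorems and produces Proposition \ref{not-equidistribution-thmintro} as a byproduct of independent interest (contrasting with triangles, where the side averages of $V_\Omega$ all coincide). One side remark of yours is inaccurate: you assert that the boundary-integral comparison of edge averages ``does not close'' because the reflected short edge escapes the rectangle. The naive reflection you describe does fail, but the paper closes the comparison anyway by reflecting across the corner diagonal $y=x$ (so that the reflected triangle $\triangle ADE$ stays inside the rectangle) and then combining this with the monotonicity of $V_\Omega$ along the half of the long side from the corner to its midpoint; this does not affect the validity of your own argument.
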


Theorem \ref{rhombustheoremc} and Theorem \ref{recderipos} also have their independent interests, since they serve as monotonicity results on rhombuses and rectangles. The proofs are similar to that of Theorem \ref{yangsheng2}.

\vskip 0.3cm
\noindent \textbf{Outline of the paper} In section 2, we derive the evolution equation for $D(\cdot)$ along a smooth flow, and we state some results on boundary properties of $V_\Omega(x):=\int_{\Omega}K(|x-y|)\, dy$, which will be crucial in the sign analysis of the derivative of $D(\cdot)$ along flow evolutions. In section 3, we will prove Theorem \ref{yangsheng2} and Theorem \ref{yangsheng2'}. In section 4, we will prove Theorem \ref{yangsheng1} and Theorem \ref{zhenxi1}. In section 5, we will prove Theorem \ref{rhombustheoremc} and Theorem \ref{recderipos}. In section 6, we prove Theorem \ref{quadritheorem}.

\section{Boundary comparison of $V_\Omega$ and evolution equation of $D(\cdot)$ along flows}

We first need the following proposition, which is crucial in the sign analysis of the derivative of $D(\cdot)$ along many of the flows utilized in the paper.

\begin{proposition}
\label{symmetrycomparison}
   Let $\Omega=\triangle_{ABC}$ be a triangle with $|BC|>|AC|$. Let $M$ be the midpoint of $AB$. For any $P\in BM$, let $P'\in AM$ such that $|PM|=|MP'|$. Then, $V_\Omega(P')>V_\Omega(P)$. 
\end{proposition}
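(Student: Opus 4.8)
The plan is to prove the comparison by a reflection argument across the perpendicular bisector of the segment $PP'$. First I would choose coordinates so that $M$ is the origin and $AB$ lies on the $x$-axis, writing $A=(-a,0)$, $B=(a,0)$ with $a=|AB|/2>0$, and $C=(c,h)$ with $h>0$. A one-line computation gives $|BC|^2-|AC|^2=-4ac$, so the hypothesis $|BC|>|AC|$ is equivalent to $c<0$; that is, the apex leans toward $A$, and the bulk of $\Omega$ sits on the $A$-side. In these coordinates a point $P\in BM$ is $P=(p,0)$ with $p\in(0,a]$ (the case $P=M$ giving trivial equality), and its companion is $P'=(-p,0)=R(P)$, where $R(x,y)=(-x,y)$ is the reflection across the $y$-axis, which is precisely the perpendicular bisector of $PP'$.

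Let $\Omega^R=R(\Omega)$ be the reflected triangle, whose apex $(-c,h)$ leans toward $B$. Since $R$ is a measure-preserving involutive isometry with $R(P')=P$, the change of variables $y=R(z)$ yields
\begin{equation*}
V_\Omega(P')=\int_\Omega K(|P'-y|)\,dy=\int_{\Omega^R}K(|R(P')-z|)\,dz=\int_{\Omega^R}K(|P-z|)\,dz.
\end{equation*}
Cancelling the common region $\Omega\cap\Omega^R$, I obtain
\begin{equation*}
V_\Omega(P')-V_\Omega(P)=\int_{D_2}K(|P-z|)\,dz-\int_{D_1}K(|P-z|)\,dz,
\end{equation*}
where $D_1=\Omega\setminus\Omega^R$ and $D_2=\Omega^R\setminus\Omega$. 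Because $R$ is an involution with $R(\Omega)=\Omega^R$, it maps $D_1$ bijectively and measure-preservingly onto $D_2$, so I may rewrite the first integral as $\int_{D_1}K(|P-R(z)|)\,dz$.

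The comparison is now pointwise. For $z=(x,y)$ one has $|P-z|^2-|P-R(z)|^2=-4px$, so whenever $x\le 0$ (recall $p>0$) we get $|P-R(z)|\le|P-z|$, with strict inequality for $x<0$; since $K$ is strictly decreasing this gives $K(|P-R(z)|)\ge K(|P-z|)$. Hence it suffices to show the geometric claim $D_1\subseteq\{x\le 0\}$, for then
\begin{equation*}
\int_{D_2}K(|P-z|)\,dz=\int_{D_1}K(|P-R(z)|)\,dz\ge\int_{D_1}K(|P-z|)\,dz,
\end{equation*}
and the inequality is strict because $D_1$ meets $\{x<0\}$ in positive measure.

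The crux, and the step I expect to be the main obstacle, is exactly this claim that the uncovered part $D_1=\Omega\setminus\Omega^R$ lies in the left half-plane. I would verify it by slicing at each height $y\in[0,h]$: the horizontal chord of $\Omega$ is $[\ell_\Omega(y),r_\Omega(y)]$ while that of $\Omega^R$ is its reflection $[-r_\Omega(y),-\ell_\Omega(y)]$. Using $c<0$ one checks $\ell_\Omega(y)<-r_\Omega(y)$ and $r_\Omega(y)<-\ell_\Omega(y)$, so the portion of $\Omega$ not covered by $\Omega^R$ at height $y$ is the left sub-chord with right endpoint $-r_\Omega(y)$; since $-r_\Omega(y)\le 0$ at every height where the two chords still overlap, and the whole chord of $\Omega$ already sits in $\{x<0\}$ for higher $y$, this yields $D_1\subseteq\{x\le 0\}$ and completes the proof. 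The essential geometric input is only the sign $c<0$ coming from $|BC|>|AC|$, which forces $\Omega$ to lean toward $A$ and thus places its excess mass on the side closer to $P'$.
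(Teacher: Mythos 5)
Your proof is correct and is essentially the paper's argument: the paper splits $\Omega$ along the segment $AD$, where $D$ is the point of $BC$ lying on the perpendicular bisector of $AB$, into the isosceles triangle $\triangle_{ABD}$ (your $\Omega\cap\Omega^R$, which contributes equally to $V_\Omega(P)$ and $V_\Omega(P')$ by symmetry) and the triangle $\triangle_{ACD}$ (your $D_1=\Omega\setminus\Omega^R$, which lies in the closed half-plane on the $P'$ side and hence contributes strictly more to $V_\Omega(P')$). The only difference is presentational: you verify the containment $D_1\subseteq\{x\le 0\}$ by slicing at each height, while the paper reads it off from the fact that all three vertices $A$, $C$, $D$ of that piece have $x$-coordinate at most $x_M$.
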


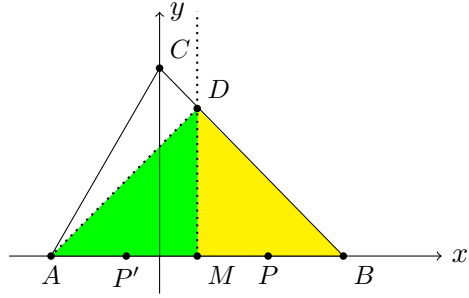
\begin{figure}[htp]
\centering
\begin{tikzpicture}[scale = 2.5]
\pgfmathsetmacro\L{1/sqrt(3)};
\pgfmathsetmacro\h{1};
\pgfmathsetmacro\t{0.4};

\draw[->] (-0.8,0)--(1.5,0) node[right] {$x$};
\fill[gray, yellow] (0.2, 0) -- (0.2, 0.785365) -- (\L+\t, 0) -- cycle;
\fill[gray, green] (0.2, 0) -- (0.2, 0.785365) -- (-\L, 0) -- cycle;

\draw (-\L,0)--(\L+\t,0);
\draw (\L+\t,0)--(0,\h);
\draw (0,\h)--(-\L,0);
\draw[thick,dotted] (0.2,0)--(0.2,1.3);
\draw[thick,dotted] (0.2,0.785365)--(-\L,0);

\draw[->] (0,-0.2)--(0,1.3) node[right] {$y$};

\fill (-\L, 0) circle (0.02 ) node[below ] {\small$A$};
\fill (-\L+\t, 0) circle (0.02 ) node[below ] {\small$P'$};
\fill (\L, 0) circle (0.02 ) node[below ] {\small$P$};
\fill (\L+\t, 0) circle (0.02 ) node[below right] {\small$B$};
\fill (0, \h) circle (0.02 ) node[above right] {\small$C$};
\fill (0.2, 0) circle (0.02 ) node[below right] {\small$M$};
\fill (0.2, 0.785365) circle (0.02 ) node[above right] {\small$D$};


\end{tikzpicture}
\caption{If $|BC|>|AC|$, then $V_\Omega(P)<V_\Omega(P')$.}
\label{fig:ctm'}
\end{figure}

\begin{proof}
    First, by symmetry, we have 
    \begin{align*}
\int_{\triangle_{ABD}}K(|PQ)|\,dQ=\int_{\triangle_{ABD}}K(|P'Q)|\, dQ.
    \end{align*}
Then, for any $Q\in int(\triangle_{ACD})$, $|PQ|>|P'Q|$, and hence by the assumption of $K$, we have
\begin{align*}
    \int_{\triangle_{ACD}}K(|PQ|)\, dQ<\int_{\triangle_{ACD}}K(|P'Q|)\, dQ.
\end{align*}
Therefore,
\begin{align*}
    V_\Omega(P)=&\int_{\triangle_{ABD}}K(|PQ)|\,dQ+\int_{\triangle_{ACD}}K(|PQ|)\, dQ\\
    <& \int_{\triangle_{ABD}}K(|P'Q)|\, dQ+\int_{\triangle_{ACD}}K(|P'Q|)\, dQ=V_\Omega(P').
\end{align*}
\end{proof}

Similarly, we have
\begin{proposition}
    \label{symmetrycomparison2}
   Let $\Omega=\triangle_{ABC}$ be a triangle with $|AB|>|AC|$. For any $P\in AC,\, P'\in AB$ with $|PA|=|P'A|$, we have $V_\Omega(P')>V_\Omega(P)$.
\end{proposition}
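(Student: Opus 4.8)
The plan is to reproduce the structure of the proof of Proposition \ref{symmetrycomparison}, but to replace the perpendicular bisector of a side by the internal bisector $\ell$ of the angle $\angle BAC$ at the vertex $A$. First I would record the elementary fact that, since $|PA|=|P'A|$ while $P$ and $P'$ lie on the two rays $AC$ and $AB$ emanating from $A$, the triangle $APP'$ is isosceles with apex $A$; hence $\ell$ is exactly the perpendicular bisector of the segment $PP'$. Consequently the reflection $\sigma$ across $\ell$ is an isometry that fixes $A$, interchanges the rays $AB$ and $AC$, and sends $P\leftrightarrow P'$.

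Next I would set up the decomposition. Let $D$ be the point where $\ell$ meets the opposite side $BC$, so that $\Omega=\triangle_{ABD}\cup\triangle_{ACD}$, the two pieces meeting along $AD\subseteq\ell$. Reflecting the smaller piece gives $\sigma(\triangle_{ACD})=\triangle_{AC^{*}D}$, where $C^{*}\in AB$ is the image of $C$ and satisfies $|AC^{*}|=|AC|$. I would then set $S=\triangle_{ACD}\cup\triangle_{AC^{*}D}$ (a region symmetric about $\ell$) and let $R=\Omega\setminus S=\triangle_{C^{*}BD}$ be the residual region, which lies entirely on the $B$-side of $\ell$.

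With this in hand the computation mirrors the previous proof. On $S$, the change of variables $Q\mapsto\sigma(Q)$ together with $\sigma(P)=P'$ and the invariance $\sigma(S)=S$ gives
\begin{align*}
\int_{S}K(|PQ|)\,dQ=\int_{S}K(|P'Q|)\,dQ.
\end{align*}
On $R$, every interior point $Q$ lies strictly on the $P'$-side of $\ell$, which is the perpendicular bisector of $PP'$, so $|P'Q|<|PQ|$; since $K$ is strictly decreasing this yields
\begin{align*}
\int_{R}K(|PQ|)\,dQ<\int_{R}K(|P'Q|)\,dQ.
\end{align*}
Adding the two relations gives $V_\Omega(P)<V_\Omega(P')$, as claimed.

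The step I expect to be the crux is the geometric containment $\sigma(\triangle_{ACD})\subseteq\triangle_{ABD}$, which is what guarantees both that $S\subseteq\Omega$ and that $R$ is a genuine triangle lying on the correct side of $\ell$. This is precisely where the hypothesis $|AB|>|AC|$ is used: it forces $C^{*}$ to land strictly between $A$ and $B$ on the segment $AB$ (equivalently $|BD|>|CD|$ by the angle bisector theorem), so that $\triangle_{AC^{*}D}=\mathrm{conv}(A,C^{*},D)\subseteq\mathrm{conv}(A,B,D)=\triangle_{ABD}$. Without this inequality the reflected copy would overshoot the side $AB$ and the clean decomposition into a symmetric part and a one-signed residual part would fail.
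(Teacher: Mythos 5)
Your proof is correct and is essentially identical to the paper's own argument: the paper also reflects across the internal bisector of $\angle A$ (meeting $BC$ at a point it calls $Q$, your $D$), splits $\Omega$ into the symmetric region $\triangle_{ACQ}\cup\triangle_{AC'Q}$ (your $S$) and the residual triangle $\triangle_{BC'Q}$ (your $R$), and concludes by the same equality-plus-strict-inequality addition. Your extra remark on why $|AB|>|AC|$ forces the reflected copy to land inside $\triangle_{ABD}$ is a point the paper leaves implicit, but it is the same proof.
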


\begin{figure}[htp]
\centering
\begin{tikzpicture}[scale=2]
\pgfmathsetmacro\jiaoA{70};
\pgfmathsetmacro\LenB{4.0000};
\pgfmathsetmacro\LenC{\LenB*0.4000};
\pgfmathsetmacro\Cx{\LenC*cos(\jiaoA)};
\pgfmathsetmacro\Cy{\LenC*sin(\jiaoA)};
\pgfmathsetmacro\px{0.6*\Cx};
\pgfmathsetmacro\py{0.6*\Cy};
\pgfmathsetmacro\qx{0.6*\LenC};
\pgfmathsetmacro\qy{0};
\pgfmathsetmacro\Qx{\LenC*sin(\jiaoA)*\LenB/((\LenB-\LenC*cos(\jiaoA))*(tan(\jiaoA/2)+\LenC*sin(\jiaoA)/(\LenB-\LenC*cos(\jiaoA))))};
\pgfmathsetmacro\Qy{tan(\jiaoA/2)*\Qx};
\fill[gray, yellow] (0, 0) -- (\Qx, \Qy) -- (\LenC, 0) -- cycle;
\fill[gray, green] (0, 0) -- (\Qx, \Qy) -- (\Cx, \Cy) -- cycle;
\draw[black, dashed] (\Qx, \Qy) -- (\LenC, 0);
\draw[thick, red] (0, 0) -- (\Qx, \Qy);
\fill (\px, \py) circle (0.02) node[left] {\small $P$};
\fill (\qx, \qy) circle (0.02) node[below] {\small $P'$};
\draw[dotted, thick] (\px, \py)--(\qx, \qy);
\node[below left] at (0, 0) {$A$};
\node[below right] at (\LenB, 0) {$B$};
\node[left] at (\Cx, \Cy) {$C$};
\fill (\LenC, 0) circle (0.02) node[below] {\small ${C'}$};
\fill (\Qx, \Qy) circle (0.02) node[above right] {\small ${Q}$};
\draw[thick] (0, 0)--(\LenB, 0)--(\Cx, \Cy)--cycle;
\end{tikzpicture}
\caption{If $|AC|<|AB|$, then $V_\Omega(P)<V_\Omega(P')$.}
\label{fig21}
\end{figure}
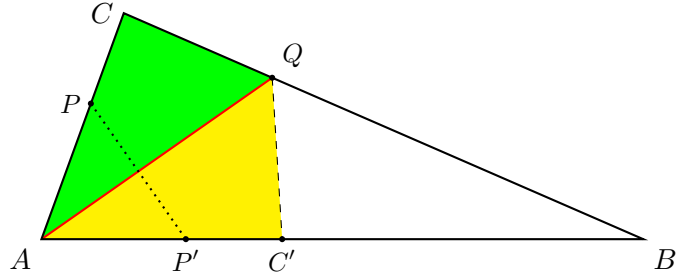

\begin{proof}
    Select $Q\in BC$ such that $AQ$ bisects $\angle A$. Let $C' \in AB$ be the reflection point of $C$ about the line segment $AQ$. For any point $Z \in \triangle_{BC'Q}$, since $|PZ|>|P'Z|$, we have
    \begin{align*}
        \int_{\triangle_{BC'Q}}K(|PZ|)\, dZ<\int_{\triangle_{BC'Q}}K(|P'Z|)\, dZ.
    \end{align*}
Let $D=\triangle_{ACQ}\cup \triangle_{AC'Q}$. Then by symmetry, we have
\begin{align*}
\int_{D}K(|PZ|)\,dZ=\int_{D}K(|P'Z|)\, dZ.
    \end{align*}
Adding them up, we thus obtain $V_\Omega(P)<V_\Omega(P')$.    
\end{proof}

The next theorem gives the evolution equation for the Riesz-type nonlocal energy $D(\cdot)$ along a flow map generated by a smooth time-dependent vector field. Classical shape derivative formulas are often presented at a particular time, say $t=0$, see for example \cite[Proposition 3.3]{BCT}, while the whole-time evolution equation along a flow is rarely seen in literature and will be of great importance to study the monotonicity properties of $D(\cdot)$ along flow evolutions. Therefore, we also include a proof of the derivation of the evolution equation, which holds in any dimension.

\begin{theorem}
\label{Tshapederivative}
    Let $\Omega$ be a piecewise smooth domain in $\mathbb{R}^n$, $\eta^t(x):=\eta(t,x): \mathbb R_+\times \mathbb{R}^n \to \mathbb{R}^n$ be a smooth vector field, and $F_t$ be the flow map generated by $\eta$. That is,
    \begin{align*}
        \begin{cases}
            \frac{\partial}{\partial t}F_t(x)=\eta(t,F_t(x))\quad t>0\\
            F_0(x)=x
        \end{cases}
    \end{align*}
Let $\Omega_t=F_t(\Omega)$. Then for any $t>0$,
\begin{align*}
    \frac{d}{dt}D(\Omega_t)=2\int_{\partial \Omega_t}V_{\Omega_t}(x) (\eta^t\cdot \nu) \, d\sigma,
\end{align*}
where $\nu$ is the outward unit normal to the boundary.
\end{theorem}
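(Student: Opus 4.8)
The plan is to differentiate the double integral $D(\Omega_t)=\int_{\Omega_t}\int_{\Omega_t}K(|x-y|)\,dx\,dy$ by pulling everything back to the fixed reference domain $\Omega$ via the flow map $F_t$. Writing $x=F_t(\xi)$ and $y=F_t(\zeta)$ with $\xi,\zeta\in\Omega$, the change of variables gives
\begin{align*}
    D(\Omega_t)=\int_{\Omega}\int_{\Omega}K\bigl(|F_t(\xi)-F_t(\zeta)|\bigr)\,J_t(\xi)\,J_t(\zeta)\,d\xi\,d\zeta,
\end{align*}
where $J_t(\xi)=\det(DF_t(\xi))$ is the Jacobian of the flow map. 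Since the reference domain no longer depends on $t$, I can differentiate under the integral sign. The two basic facts I will invoke are the transport identity $\tfrac{\partial}{\partial t}J_t(\xi)=(\div\,\eta)(t,F_t(\xi))\,J_t(\xi)$, and the chain rule $\tfrac{\partial}{\partial t}F_t(\xi)=\eta(t,F_t(\xi))$ from the defining ODE. Applying the product rule produces three terms: one from differentiating the kernel and two symmetric terms from differentiating each Jacobian factor.

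Next I would push the differentiated expression back to $\Omega_t$ by undoing the change of variables. After differentiating, the kernel-derivative term contributes
\begin{align*}
    \int_{\Omega_t}\int_{\Omega_t}\nabla_x K(|x-y|)\cdot\bigl(\eta^t(x)-\eta^t(y)\bigr)\,dx\,dy,
\end{align*}
and by the symmetry of $K(|x-y|)$ under interchanging $x$ and $y$ this equals $2\int_{\Omega_t}\int_{\Omega_t}\nabla_x K(|x-y|)\cdot\eta^t(x)\,dx\,dy$. The two Jacobian terms each return a factor of $\div\,\eta$ and, again by symmetry, combine into $2\int_{\Omega_t}\int_{\Omega_t}K(|x-y|)\,(\div\,\eta^t)(x)\,dx\,dy$. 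Recalling that $V_{\Omega_t}(x)=\int_{\Omega_t}K(|x-y|)\,dy$, I would rewrite the sum of these as
\begin{align*}
    \frac{d}{dt}D(\Omega_t)=2\int_{\Omega_t}\Bigl(\nabla V_{\Omega_t}(x)\cdot\eta^t(x)+V_{\Omega_t}(x)\,\div\,\eta^t(x)\Bigr)\,dx
    =2\int_{\Omega_t}\div\bigl(V_{\Omega_t}(x)\,\eta^t(x)\bigr)\,dx,
\end{align*}
where I have used that $\nabla V_{\Omega_t}(x)=\int_{\Omega_t}\nabla_x K(|x-y|)\,dy$. The divergence theorem then converts the volume integral into the claimed boundary integral $2\int_{\partial\Omega_t}V_{\Omega_t}(x)\,(\eta^t\cdot\nu)\,d\sigma$.

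The main technical obstacle is the justification of differentiating $V_{\Omega_t}$ and applying the divergence theorem when the kernel $K(|x-y|)$ is singular at the diagonal $x=y$ (as in the prototype $r^{-\alpha}$, $\alpha\in(0,2)$). The integrability hypothesis $\int_0^1 K(r)r\,dr<\infty$ guarantees $D(\Omega_t)$ and $V_{\Omega_t}$ are finite, but one must verify that $\nabla V_{\Omega_t}$ is well defined and that the formal differentiation under the integral is legitimate near the singularity; this is where care is needed, typically by an excision argument removing an $\varepsilon$-neighborhood of the diagonal, establishing uniform integrability of the resulting integrands, and passing to the limit. A secondary point is that $\Omega$ is only piecewise smooth, so the divergence theorem must be applied on a Lipschitz domain, which is standard but should be noted. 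Since the vector field $\eta$ is assumed smooth and the problematic terms are controlled by the decay condition on $K$, I expect these subtleties to be dispatched by a routine dominated-convergence argument rather than to pose a genuine difficulty.
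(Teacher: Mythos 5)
Your proposal is correct and follows essentially the same route as the paper: pull back to the fixed domain via the flow map, differentiate under the integral using the Jacobi/Liouville formula for the Jacobian, symmetrize the kernel-gradient and divergence terms, and convert to a boundary integral by the divergence theorem. The only cosmetic difference is the order of operations at the end (the paper applies the divergence theorem in $x$ for each fixed $y$ and then uses Fubini, whereas you first form $V_{\Omega_t}$ and then take the divergence of $V_{\Omega_t}\eta^t$), and the paper likewise handles the diagonal singularity by the regularization of $K$ that you flag.
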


\begin{proof}
Let $JF_t=det(\nabla F_t)$. Then by Jacobi's matrix determinant formula and the chain rule, we have
\begin{align*}
    \frac{d}{dt}JF_t=&JF_t \, tr\left((\nabla F_t)^{-1} \frac{d}{dt} \nabla F_t\right)\\
    =& JF_t  tr\left((\nabla F_t)^{-1}  \nabla (\frac{d}{dt}F_t)\right)\\
     =& JF_t  tr\left((\nabla F_t)^{-1}  \nabla (\eta^t\circ F_t)\right)\\
    =&JF_t tr \left((\nabla F_t)^{-1} (\nabla \eta^t \circ F_t) \nabla F_t\right)\\
    =& (\text{div}\eta^t \circ F_t)JF_t.
\end{align*}
    By the area formula,
    \begin{align*}
      D(\Omega_t)=\int_\Omega \int_\Omega K(|F_t(x)-F_t(y)|)JF_t(x)JF_t(y)\, dxdy.
    \end{align*}
Hence, up to a regularization of the kernel $K$, (see for example the argument of the proof of \cite[Proposition 3.3]{BCT}), we can take the time derivative inside the integral, and the following calculation is valid.
\begin{align*}
    \frac{d}{dt}D(\Omega_t)=&\int_\Omega\int_\Omega K'(|F_t(x)-F_t(y)|)\frac{F_t(x)-F_t(y)}{|F_t(x)-F_t(y)|}\cdot \left(\eta^t(F_t(x))-\eta^t(F_t(y))\right) JF_t(x)JF_t(y)\, dxdy\\
    &+2\int_\Omega \int_\Omega K(|F_t(x)-F_t(y)|)\text{div}\eta^t(F_t(x)) JF_t(x)JF_t(y)\, dxdy\\
    =&\int_{\Omega_t}\int_{\Omega_t}K'(|x-y|)\frac{x-y}{|x-y|}\left(\eta^t(x)-\eta^t(y)\right)\, dxdy+2\int_{\Omega_t}\int_{\Omega_t}K(|x-y|)\text{div}\eta^t \, dxdy\\
    =&2\int_{\Omega_t}\int_{\partial \Omega_t}K(|x-y|)\eta^t(x)\cdot \nu (x)\, d\sigma_x dy,\\
    &\quad \mbox{by applying the divergence theorem on the second term}\\
    =&2\int_{\partial \Omega_t}V_{\Omega_t}(x) (\eta^t \cdot \nu) \, d\sigma,\quad \mbox{by Fubini's Theorem.}
\end{align*}
\end{proof}

If we choose $F_t$ to be a translation flow, then we immediately obtain from Theorem \ref{Tshapederivative} and the translating invariance of $D(\cdot)$ that, for a triangle $\Omega=\triangle_{ABC}$, 
\begin{align*}
     \frac{1}{|AB|}\int_{AB}V_\Omega\, ds=\frac{1}{|BC|}\int_{BC}V_\Omega\, ds=\frac{1}{|AC|}\int_{AC}V_\Omega\, ds.
\end{align*}
This equality is also derived in \cite{BCT} through the parallel movement of a side, inspired by \cite{FV19}. This, combined with Proposition \ref{symmetrycomparison2}, reveals an intriguing geometric phenomenon: in an arbitrary triangular domain $\Omega$, the longer the side length, the larger the maximum value of $V_\Omega$ on that side. However, regardless of the side length, the averaged value of $V_\Omega$ on each side remains the same. Then, a natural question arises: what about the case of rectangles?

Note that similar to the proof of Proposition \ref{symmetrycomparison2}, we know that if $\Omega=\square_{ABCD}$ is a rectangle with the condition $|AB|>|AD|$, then 
\begin{align*}
     \Vert V_\Omega \Vert_{L^\infty(AB)}> \Vert V_\Omega \Vert_{L^\infty(AD)}.
\end{align*}
Regarding the averaged value of $V_\Omega$ on each side of the rectangle, quite different from the triangular case, we have:

\begin{proposition}
\label{not-equidistribution-thmintro}
Let $\Omega=\square_{ABCD}$ be a rectangle with $|AB|>|AD|$. Then, 
\begin{align}
\label{rectangle-neumann-data}
\frac{\int_{AB}V_\Omega\,ds}{|AB|}> \frac{\int_{AD}V_\Omega\,ds}{|AD|}.
\end{align} 
\end{proposition}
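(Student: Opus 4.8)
The plan is to introduce coordinates, reduce each of the two averaged integrals to a single planar double integral over a common domain with a common radial kernel, and then compare the two resulting integrands pointwise after an antisymmetrization.

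First I would place $A=(0,0)$, $B=(a,0)$, $C=(a,b)$, $D=(0,b)$, so that $\Omega=[0,a]\times[0,b]$ with $a=|AB|>|AD|=b$. For a point $(s,0)\in AB$ one has $V_\Omega(s,0)=\int_0^a\int_0^b K\big(\sqrt{(s-u)^2+w^2}\big)\,dw\,du$, so that $\int_{AB}V_\Omega\,ds=\int_0^a\int_0^a g(|s-u|)\,ds\,du$ with $g(r):=\int_0^b K(\sqrt{r^2+w^2})\,dw$. Invoking the elementary identity $\int_0^L\int_0^L\phi(|s-u|)\,ds\,du=2\int_0^L(L-r)\phi(r)\,dr$ collapses the double segment integral and yields
\begin{align*}
\frac{1}{|AB|}\int_{AB}V_\Omega\,ds=2\int_0^a\int_0^b\Big(1-\frac ra\Big)K(\sqrt{r^2+w^2})\,dw\,dr.
\end{align*}
Carrying out the identical computation on $AD$, where the roles of the two edge lengths are interchanged, gives
\begin{align*}
\frac{1}{|AD|}\int_{AD}V_\Omega\,ds=2\int_0^b\int_0^a\Big(1-\frac rb\Big)K(\sqrt{r^2+u^2})\,du\,dr.
\end{align*}

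Next I would relabel variables so that in both expressions the first variable runs over $[0,a]$ and the second over $[0,b]$ (call them $x$ and $y$); both integrals then live over the same rectangle $[0,a]\times[0,b]$ against the same kernel $K(\sqrt{x^2+y^2})$, and only the weights $1-\tfrac xa$ and $1-\tfrac yb$ differ. Subtracting and then rescaling by $x=a\xi$, $y=b\eta$ gives
\begin{align*}
\frac{1}{|AB|}\int_{AB}V_\Omega\,ds-\frac{1}{|AD|}\int_{AD}V_\Omega\,ds=2ab\int_0^1\int_0^1(\eta-\xi)K\big(\sqrt{a^2\xi^2+b^2\eta^2}\big)\,d\eta\,d\xi.
\end{align*}
Since the unit square is symmetric under $\xi\leftrightarrow\eta$, I would antisymmetrize the integrand to obtain
\begin{align*}
2ab\int_0^1\int_0^1(\eta-\xi)K(\sqrt{a^2\xi^2+b^2\eta^2})\,d\eta\,d\xi=ab\int_0^1\int_0^1(\eta-\xi)\Big[K(\sqrt{a^2\xi^2+b^2\eta^2})-K(\sqrt{a^2\eta^2+b^2\xi^2})\Big]\,d\eta\,d\xi.
\end{align*}

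To finish, I would note that $(a^2\xi^2+b^2\eta^2)-(a^2\eta^2+b^2\xi^2)=(a^2-b^2)(\xi^2-\eta^2)$ has, since $a>b$, the same sign as $\xi-\eta$; because $K$ is strictly decreasing, the bracket carries the opposite sign, namely that of $\eta-\xi$, so the integrand $(\eta-\xi)[\,\cdots]$ is nonnegative throughout and strictly positive off the diagonal $\{\xi=\eta\}$. Hence the difference is strictly positive, which is exactly \eqref{rectangle-neumann-data}. I expect the only genuine step to be the first one: recognizing that both line integrals can be written as a single double integral over the \emph{same} rectangle against the \emph{same} radial kernel, so that the entire comparison reduces to comparing the two linear weights. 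Once this common representation is in place, the antisymmetrization makes the sign transparent, and the argument uses nothing beyond the strict monotonicity of $K$, in particular no convexity and no symmetrization.
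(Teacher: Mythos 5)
Your proof is correct, and it takes a genuinely different route from the paper. The paper argues geometrically: it reflects across the diagonal line $y=x$ through $A$ to show $V_\Omega(P)<V_\Omega(P')$ for points $P\in AM$, $P'\in AM'$ equidistant from $A$ (with $M$, $M'$, $N$ the relevant midpoints), invokes the monotonicity of $V_\Omega$ along $AN$ via a further reflection, and then compares the averages over the half-sides $AM$ and $AN$. You instead collapse each side average into a double integral over the common rectangle $[0,a]\times[0,b]$ against the common kernel $K(\sqrt{x^2+y^2})$ via the correlation identity $\int_0^L\int_0^L\phi(|s-u|)\,ds\,du=2\int_0^L(L-r)\phi(r)\,dr$, so that the whole comparison reduces to the two linear weights $1-\tfrac xa$ versus $1-\tfrac yb$; after rescaling to the unit square and antisymmetrizing, the sign follows pointwise from $(a^2\xi^2+b^2\eta^2)-(a^2\eta^2+b^2\xi^2)=(a^2-b^2)(\xi^2-\eta^2)$ and the strict decrease of $K$. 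Your computation is self-contained, avoids the auxiliary comparison lemmas entirely, and even produces an explicit positive formula for the difference in \eqref{rectangle-neumann-data}; its only cost is that it exploits the exact product structure of the rectangle and so does not generalize to the triangular comparisons elsewhere in the paper, whereas the reflection technique is the one reused throughout. The only point you should make explicit is the absolute integrability of $K(\sqrt{x^2+y^2})$ on the rectangle (guaranteed by the standing assumption $\int_0^1K(r)r\,dr<\infty$ together with the monotonicity of $K$), which justifies the Fubini exchanges and the antisymmetrization; with that remark your argument is complete.
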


This proposition turns out to be crucial to prove Theorem \ref{recderipos}.

\begin{proof}[Proof of Proposition \ref{not-equidistribution-thmintro}]
Without loss of generality, we assume that $A=(0,0)$, $B=(a,0)$, $C=(a,b)$ and $D=(0,b)$, with $a>b$. Let $E$ be the point of the intersection of the line $l: \, y=x$ and the side $CD$. Let $M$ be the midpoint of $AD$, $N$ be the midpoint of $AB$ and $M'\in AN$ with $|AM'|=|AM|$. See Figure \ref{fig-rectangle-4} below. 

Since the reflection of $DE$ about the line $l$ strictly lies inside the rectangle $\square ABCD$, for any $P\in AM$, $P'\in AM'$ with $|AP|=|AP'|$, similar to the proof of Proposition \ref{symmetrycomparison2}, we have $V_\Omega(P)<V_\Omega(P')$. By reflection argument and also similar to the proof of Proposition \ref{symmetrycomparison}, we have that $V_\Omega$ is strictly increasing along the segment $AN$ from $A$ to $N$. Therefore,
\begin{align*}
    \frac{1}{|AM|}\int_{AM}V_\Omega\, ds<\frac{1}{|AM'|}\int_{AM'}V_\Omega\, ds<\frac{1}{|M'N|}\int_{M'N}V_\Omega\, ds.
\end{align*}
Therefore,
\begin{align*}
    \frac{1}{|AN|}\int_{AN}V_\Omega\, ds=&\frac{1}{|AM'|+|M'N|}\left(\int_{AM'}V_\Omega\, ds+\int_{M'N}V_\Omega\, ds\right)\\
    >&\frac{1}{|AM|}\int_{AM}V_\Omega\, ds
\end{align*}
By symmetry, 
\begin{align*}
    \frac{1}{|AN|}\int_{AN}V_\Omega\, ds= \frac{1}{|AB|}\int_{AB}V_\Omega\, ds,\quad  \frac{1}{|AM|}\int_{AM}V_\Omega\, ds= \frac{1}{|AD|}\int_{AD}V_\Omega\, ds.
\end{align*}
Hence \eqref{rectangle-neumann-data} is obtained.
\end{proof}

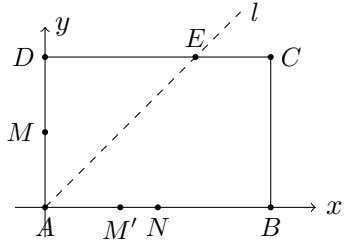
\begin{figure}[htp]
\centering
\begin{tikzpicture}[scale = 2]
\pgfmathsetmacro\Ls{2};
\pgfmathsetmacro\hs{1};
\pgfmathsetmacro\ts{0.3};
\pgfmathsetmacro\hts{\hs*(1+\ts)};

\fill (0, 0) circle (0.02 ) node[below] {\small $A$};
\fill (0, 1) circle (0.02 ) node[left] {\small$D$};
\fill (1.5,0) circle (0.02) node[below] {\small$B$};
\fill (1.5, 1) circle (0.02) node[right] {\small$C$};
\fill (0,0.5) circle (0.02) node[left]{\small $M$};
\fill (0.5,0) circle (0.02) node[below]{\small $M'$};
\fill (1,1) circle (0.02) node[above]{\small $E$};
\fill (0.75,0) circle (0.02) node[below]{\small $N$};

\draw[dashed] (0,0)--(1.3,1.3);

\fill (1.3,1.3) node[right]{\small $l$};

\draw[->] (-0.2,0)--(1.8,0) node[right] {$x$};
\draw[->](0,-0.2)--(0,1.2) node[right] {$y$};
\draw (0,1)--(1.5,1);
\draw  (1.5,0)--(1.5,1);

\end{tikzpicture}
\caption{Picture illustration of the proof of Proposition \ref{not-equidistribution-thmintro}}
\label{fig-rectangle-4}
\end{figure}

\begin{remark}
    The boundary behavior of $V_\Omega$ closely resembles that of the squared norm of the gradient of the torsion function, as evidenced by \cite{HLXY} and \cite{4L24}. The location of the maximum norm of the gradient of the torsion function is an important problem and has been under extensive research, see \cite{4L24} and references therein. In particular, \cite{4L24} demonstrates that for a triangular domain, the maximum of the norm of the gradient of the torsion function uniquely occurs on the longest side, between the midpoint and the foot of the altitude. Furthermore, for nearly equilateral triangles, it is shown that the point of maximum norm is closer to the midpoint. It would be an interesting question itself to derive similar results for $V_\Omega$.
\end{remark}

\section{Monotonicity of $D(\cdot)$ on triangles via height stretching and compressing flows}

In this section, we will prove Theorem \ref{yangsheng2} and Theorem \ref{yangsheng2'}.

\begin{proof}[Proof of Theorem \ref{yangsheng2}]
Without loss of generality, we assume $C=(0,1)$.  Let $$F_t(x,y)=\frac{1}{\sqrt{1+t}}\left(x, (1+t)y\right).$$
Then $F_t(\Omega)=\Omega_t=\triangle_{A_tB_t\tilde{C}_t}$. We construct the following height-stretching time-dependent vector field 
\begin{align*}
    \eta(t,x,y)=\frac{1}{2}\frac{1}{1+t}\left(-x,y\right).
\end{align*}
Since 
\begin{align*}
    \frac{\partial}{\partial t}F_t(x,y)=\frac{1}{2}(1+t)^{-3/2}\left(-x,(1+t)y\right),
\end{align*}
we have 
\begin{align*}
    \frac{\partial }{\partial t}F_t(x,y)=\eta\left(t,F_t(x,y)\right).
\end{align*}
That is, $\eta$ is exactly the smooth vector field generating the flow map $F_t$. Hence by Theorem \ref{Tshapederivative}, we have
\begin{align}
\label{11}
    \frac{d}{dt}D(\Omega_t)=\int_{\partial \Omega_t}\frac{1}{1+t}V_{\Omega_t}((x,y)) (-x,y)\cdot \nu\, ds.
\end{align}
Let $\beta_t=\angle A_tB_t\tilde{C}_t$ and $\gamma_t=\angle B_tA_t\tilde{C}_t$. On $A_tB_t$, 
\begin{align*}
    (-x,y)\cdot \nu=(-x,y)\cdot (0,-1)=-y=0.
\end{align*}
On $B_t\tilde{C}_t$,
\begin{align*}
      (-x,y)\cdot \nu=(-x,y)\cdot (\sin\beta_t,\cos\beta_t)=\cos \beta_t (y-x\tan\beta_t).
\end{align*}
On $A_t\tilde{C}_t$,
\begin{align*}
    (-x,y)\cdot \nu=(-x,y)\cdot (-\sin\gamma_t,\cos\gamma_t)=\cos \gamma_t (y+x \tan \gamma_t).
\end{align*}
Since $\partial \Omega_t=A_tB_t\cup B_t\tilde{C}_t\cup A_t\tilde{C_t}$, by \eqref{11} and the above, we have
\begin{align*}
  &\frac{d}{dt}D(\Omega_t)\\
     =&\frac{\cos \beta_t}{1+t}\int_{B_t\tilde{C}_t} V_{\Omega_t}((x,y)) (y-x\tan \beta_t) \, ds+\frac{\cos \gamma_t}{1+t}\int_{A_t\tilde{C}_t} V_{\Omega_t}((x,y)) (y+x\tan \gamma_t) \, ds.
\end{align*}
Let $M_t$ be the midpoint of $B_t\tilde{C}_t$. Since $y=(\tan \beta_t)x$ is exactly the line passing through the origin and $M_t$, for any point $p_t=(x,y)\in B_tM_t$ and $p_t'=(x',y')\in M_t\tilde{C}_t$ with $|p_tM_t|=|p_t'M_t|$, we have 
\begin{align*}
    0>y-x\tan\beta_t=-(y'-x'\tan\beta_t).
\end{align*}
Also, for $-1<t<t_1$, $|A_tB_t|>|A_t\tilde{C}_t|$, and thus by Proposition \ref{symmetrycomparison}, we have $V_{\Omega_t}(p_t)<V_{\Omega_t}(p_t')$.
Hence 
\begin{align*}
    \int_{B_t\tilde{C}_t} V_{\Omega_t}((x,y)) (y-x\tan \beta_t) \, ds=\int_{B_tM_t}\left(V_{\Omega_t}(p_t)-V_{\Omega_t}(p_t')\right)(y-x\tan \beta_t)\, ds>0.
\end{align*}
Similarly, let $N_t$ be the midpoint of side $A_t\tilde{C}_t$. Since $y=-(\tan \gamma_t)x$ is the line passing through the origin and $N_t$, for any point $q_t=(x,y)\in A_tN_t$ and $q_t'=(x',y')\in N_t\tilde{C}_t$ with $|q_tN_t|=|q_t'N_t|$, we have 
\begin{align*}
    0>y+x\tan\gamma_t=-(y'+x'\tan\gamma_t).
\end{align*}
Again by Proposition \ref{symmetrycomparison}, for any $-1<t<t_1$, we have
\begin{align*}
    \int_{A_t\tilde{C}_t} V_{\Omega_t}((x,y)) (y+x\tan \gamma_t) \, ds=\int_{A_tN_t}\left(V_{\Omega_t}(q_t)-V_{\Omega_t}(q_t')\right)(y+x\tan \gamma_t)\, ds>0.
\end{align*}
Hence $\tfrac{d}{dt}D(\Omega_t)>0$ for any $t\in (-1,t_1)$. This finishes the proof.
\end{proof}

The proof above is illustrated in Figure \ref{fig:add1} below. 

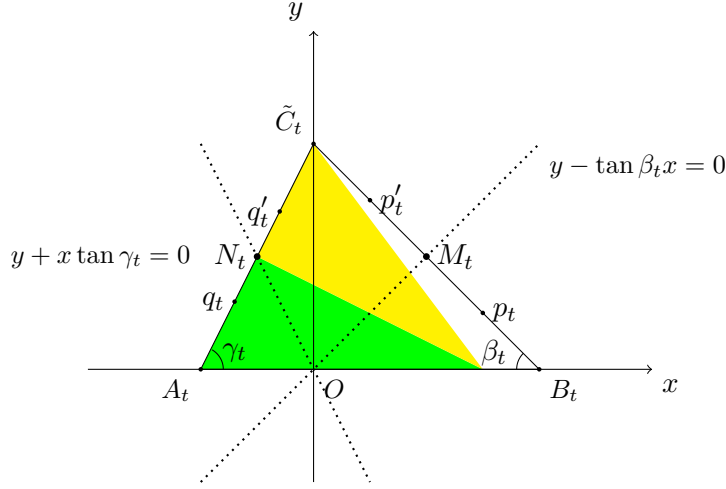
\begin{figure}[htp]
\centering
\begin{tikzpicture}[scale = 1.5]
\pgfmathsetmacro\d{180/pi};

\fill[green,opacity=0.4] (-1,0)--(3/2,0)--(-1/2,1)--cycle;
\fill[yellow,opacity=0.4] (0,2)--(3/2,0)--(-1/2,1)--cycle;
\draw (-1,0)--(0,2); \draw (0,2)--(2,0);\draw (-1,0)--(2,0);

\draw[->] (-2,0)--(3,0) node[below right] {$x$};
\draw[->] (0,-1)--(0,3) node[above left] {$y$};

\node[below left] at (-1,0) {\small $A_t$};
\node[above left] at (0,2) {\small $\tilde{C}_t$};
\node[below right] at (2,0) {\small $B_t$};
\node[below right] at (0,0) {\small $O$};

\fill (-1,0) circle (0.02);
\fill (0,2) circle (0.02);
\fill (2,0) circle (0.02);

\draw[samples=500,black, domain=0:pi/3, variable=\t] plot ({-1+0.2*cos(\t*\d)},{0.2*sin(\t*\d)});
\node at (-0.7,0.15) {$\gamma_{t}$};

\draw[samples=500,black, domain=15*pi/20:pi, variable=\t] plot ({2+0.2*cos(\t*\d)},{0.2*sin(\t*\d)});
\node at (1.6,0.15) {$\beta_{t}$};

\fill (1,1) circle (0.03) node[right] {$M_{t}$};
\draw[samples=500,thick,dotted, domain=-1:2, variable=\t] plot ({\t},{1*\t});
\node[right] at (2,1.8) {\small $y-\tan \beta_{t}x=0$};

\fill (-1/2,1) circle (0.03) node[left] {$N_{t}$};
\draw[samples=500,thick,dotted, domain=-1:0.5, variable=\t] plot ({\t},{-2*\t});
\node[left] at (-1,1) {\small $y+x\tan \gamma_{t}=0$};

\fill (0.5,1.5) circle (0.02) node[right] {$p_{t}'$};
\fill (1.5,.5) circle (0.02) node[right] {$p_{t}$};

\fill (-0.3,1.4) circle (0.02) node[left] {$q_{t}'$};
\fill (-0.7,0.6) circle (0.02) node[left] {$q_{t}$};

\end{tikzpicture}
\caption{For $-1<t<t_1$, $|A_tB_t|>|B_t\tilde{C}_t|\ge |A_t\tilde{C}_t|$. $y=x\tan \beta_t$ and $y=-x\tan \gamma_t$ pass through the midpoints of $B_t\tilde{C}_t$ and $A_t\tilde{C}_t$, respectively. $|p_tM_t|=|p_t'M_t|$, $V_{\Omega_t}(p_t)<V_{\Omega_t}(p_t')$, $|q_tN_t|=|q_t'N_t|$, $V_{\Omega_t}(q_t)<V_{\Omega_t}(q_t')$.}
\label{fig:add1}
\end{figure}

Next, we prove Theorem \ref{yangsheng2'}.

\begin{proof}[Proof of Theorem \ref{yangsheng2'}]
    Without loss of generality, we assume that $|OC|=1$. Let 
    \begin{align*}
        F_t(x,y)=\frac{1}{\sqrt{1-t}}\left(x,(1-t)y\right),\quad \eta(t,x,y)=\frac{1}{2(1-t)}(x,-y).
    \end{align*}
Then $\Omega_t=F_t(\Omega)$, and one can check that $F_t$ is generated by the vector field $\eta$.

By the similar derivation as in the proof of Theorem \ref{yangsheng2}, we have
\begin{align*}
    \frac{d}{dt}T(\Omega_t)=&\frac{\cos \beta_t}{1-t}\int_{B_t\tilde{C}_t} V_{\Omega_t}((x,y))(x\tan\beta_t-y)\, ds\\
    &+\frac{\cos \gamma_t}{1-t}\int_{ A_t\tilde{C}_t} V_{\Omega_t}((x,y))(-x\tan\gamma_t-y)\, ds,
\end{align*}
where $\beta_t=\angle \tilde{C}_tB_tA_t$ and $\gamma_t=\angle \tilde{C}_t A_tB_t$. The similar argument in the proof of Theorem \ref{yangsheng2} also ensures that $\tfrac{d}{dt}D(\Omega_t)>0$, for any $t\in (-\infty, t_2)$. 
\end{proof}

To the end, we remark that in Theorem \ref{yangsheng2}, if $|AB|>|BC|=|AC|$, then the strict increasing property of $D(\cdot)$ during the shortest height stretching process implies that when $\alpha\in (\pi/3,\pi)$, $D(I(\alpha))$ is a strictly decreasing function. Similarly, if in Theorem \ref{yangsheng2'}, $|AB|<|BC|=|AC|$, then it implies that when $\alpha\in (0,\pi/3)$, $D(I(\alpha))$ is a strictly increasing function. Therefore, we immediately have our Corollary \ref{coro}.

\section{Monotonicity of $D(\cdot)$ on triangles via leg stretching and parallel movement flow}

The main goal of this section is to prove Theorem \ref{yangsheng1} and Theorem \ref{zhenxi1}.

To prove Theorem \ref{yangsheng1}, it suffices to prove the following theorem: 
\begin{theorem}
\label{yangsheng1'}
Let $\Omega$ be a triangle $\triangle_{ABC}$, where $A$ lies at the origin, $B$ lies on the positive $x$-axis and $|AB|=|AC|$. Let $B_t=(1+t)B$ and $\Omega_t=\sqrt{|AB|/|AB_t|}\triangle_{AB_tC}=\triangle_{A\tilde{B}_tC_t}$. Then, for $t\in [0,+\infty)$, $D(\Omega_t)$ is a strictly decreasing function. 
\end{theorem}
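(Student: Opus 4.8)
The plan is to mimic the flow-based argument of Theorem~\ref{yangsheng2}, replacing the height-stretching flow by a leg-stretching one. Normalize so that $A=(0,0)$, $B=(b,0)$ and $C=(b\cos\alpha,b\sin\alpha)$, where $\alpha=\angle BAC$ and $|AB|=|AC|=b$. The linear map fixing $A$, sending $B\mapsto B_t=(1+t)B$ and fixing $C$ is $\left(\begin{smallmatrix}1+t & -t\cot\alpha\\ 0 & 1\end{smallmatrix}\right)$, so after the area-preserving rescaling I am led to the flow map
\[
F_t(x,y)=\frac{1}{\sqrt{1+t}}\bigl((1+t)x-t\cot\alpha\,y,\;y\bigr),\qquad F_t(\Omega)=\Omega_t=\triangle_{A\tilde{B}_tC_t}.
\]
A direct computation then identifies its generating vector field as
\[
\eta(t,x,y)=\frac{1}{2(1+t)}\bigl(x-2\cot\alpha\,y,\;-y\bigr),
\]
whose divergence vanishes, reflecting area preservation. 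Feeding this into Theorem~\ref{Tshapederivative} gives
\[
\frac{d}{dt}D(\Omega_t)=\frac{1}{1+t}\int_{\partial\Omega_t}V_{\Omega_t}(x,y)\,\bigl((x-2\cot\alpha\,y,-y)\cdot\nu\bigr)\,d\sigma .
\]

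Next I would reduce the boundary integral to a single side. The flow keeps $A\tilde{B}_t$ on the $x$-axis and keeps $AC_t$ on the ray through the origin of direction $(\cos\alpha,\sin\alpha)$; equivalently, $\eta$ is tangent to both of these lines through the origin. Hence $\eta\cdot\nu\equiv 0$ on $A\tilde{B}_t$ (where $\nu=(0,-1)$) and on $AC_t$, so only the genuinely moving side $\tilde{B}_tC_t$ contributes. Writing its outward normal as $\nu=(\sin\beta_t,\cos\beta_t)$ with $\beta_t=\angle A\tilde{B}_tC_t$, a short computation collapses the integrand weight to
\[
(x-2\cot\alpha\,y,-y)\cdot\nu=\frac{1}{\rho}\,\bigl(x\sin\alpha-(1+t+\cos\alpha)\,y\bigr),\qquad \rho=\sqrt{\sin^2\alpha+(1+t-\cos\alpha)^2}>0 .
\]

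The crux is the reflection argument on $\tilde{B}_tC_t$. I would verify that the linear weight $w(x,y)=x\sin\alpha-(1+t+\cos\alpha)y$, which vanishes along a line through the origin, vanishes \emph{exactly} at the midpoint $M_t$ of $\tilde{B}_tC_t$; this is the cancellation the vector field was designed to produce, and checking $w(M_t)=0$ is a direct substitution. Consequently $w>0$ on the half $\tilde{B}_tM_t$ and $w<0$ on $M_tC_t$, and for reflected points $P_t\in\tilde{B}_tM_t$, $P_t'\in M_tC_t$ with $|P_tM_t|=|P_t'M_t|$ one gets $w(P_t)=-w(P_t')>0$. Since $|A\tilde{B}_t|=\sqrt{1+t}\,b>b/\sqrt{1+t}=|AC_t|$ for $t>0$, Proposition~\ref{symmetrycomparison}, applied to $\triangle_{A\tilde{B}_tC_t}$ with base $\tilde{B}_tC_t$, apex $A$, and the longer adjacent side being $A\tilde{B}_t$, yields $V_{\Omega_t}(P_t)<V_{\Omega_t}(P_t')$. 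Pairing the two halves,
\[
\int_{\tilde{B}_tC_t}V_{\Omega_t}\,w\,d\sigma=\int_{\tilde{B}_tM_t}\bigl(V_{\Omega_t}(P_t)-V_{\Omega_t}(P_t')\bigr)\,w(P_t)\,d\sigma<0,
\]
so $\tfrac{d}{dt}D(\Omega_t)<0$ for every $t>0$, giving strict decrease on $[0,\infty)$.

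The main obstacle I anticipate is not any single computation but the bookkeeping: constructing $F_t$ and checking that $\eta$ generates it, confirming the two stationary sides contribute nothing, and—most importantly—verifying that the sign-changing weight $w$ is anti-symmetric about $M_t$ so that Proposition~\ref{symmetrycomparison} applies cleanly. One must also watch that the roles are reversed relative to Theorem~\ref{yangsheng2}: here the longer adjacent side sits at $\tilde{B}_t$, so the comparison $V_{\Omega_t}(P_t)<V_{\Omega_t}(P_t')$ has to point in the direction making the derivative negative rather than positive.
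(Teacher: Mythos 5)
Your proposal is correct and follows essentially the same route as the paper: the same rescaled leg-stretching flow $F_t=\tfrac{1}{\sqrt{1+t}}G_t$ with generator $\eta=\tfrac{1}{2(1+t)}(x-2\cot\alpha\,y,-y)$, reduction of the boundary integral to the single moving side via Theorem~\ref{Tshapederivative}, and the reflection argument about the midpoint of $\tilde{B}_tC_t$ combined with Proposition~\ref{symmetrycomparison}. The only (cosmetic) difference is that you verify the vanishing of the weight at the midpoint by direct coordinate substitution, where the paper identifies the zero line as $A\tilde{M}_t$ via the law of sines.
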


\begin{figure}[htp]
\centering
\begin{tikzpicture}[scale = 2.5]

\fill (0, 0) circle (0.02 ) node[below ] {\small$A$};
\fill (1, 0) circle (0.02 ) node[below ] {\small$B$};
\fill (0.707, 0.707) circle (0.02 ) node[above right] {\small$C$};
\fill (1.2, 0) circle (0.02 ) node[below] {\small$B_{t}$};

\draw[->] (-0.3,0)--(1.5,0) node[right] {$x$};
\draw[->] (0,-0.2)--(0,1.1) node[right] {$y$};

\draw (0,0)--(1.2,0);
\draw[thick,dotted] (1,0)--(0.707,0.707);
\draw (0.707,0.707)--(1.2,0);
\draw (0.707,0.707)--(0,0);

\node at (0.5,0.2) {\small $\Omega$};

\draw[->,thick] (1.6, 0.4)--(2.6,0.4);

\node[above] at (2.1, 0.5) {\small $\Omega_t=\sqrt{\tfrac{|AB|}{|AB_t|}}\triangle_{AB_tC}$};


\draw[->] (2.7,0)--(4.5,0) node[right] {$x$};
\draw[->] (3,-0.2)--(3,1.1) node[right] {$y$};

\draw (3,0)--(3+1.2,0);
\draw (3+1.2,0)--(3+0.707,0.707);
\draw (3,0)--(3+0.833*0.707,0.833*0.707);
\draw [thick,dotted](3+0.833*0.707,0.833*0.707)--(3+0.833*1.2,0);
\draw (3+0.707,0.707)--(3,0);

\fill (3+0.833*1.2,0) circle (0.02 ) node[below ] {\small$\tilde{B}_t$};
\fill (3+0.833*0.707,0.833*0.707) circle (0.02 ) node[above left] {\small$C_t$};
\fill (3, 0) circle (0.02 ) node[below ] {\small$A$};
\fill (4.2, 0) circle (0.02 ) node[below ] {\small$B_t$};
\fill (3+0.707, 0.707) circle (0.02 ) node[above right] {\small$C$};

\node at (3.5,0.2) {\small $\Omega_t$};

\end{tikzpicture}
\caption{An illustration of Theorem \ref{yangsheng1'}. Fix one angle, stretching one leg while scaling to maintain the area as fixed, then $D(\cdot)$ is strictly decreasing.}
\label{fig:ctm}
\end{figure}
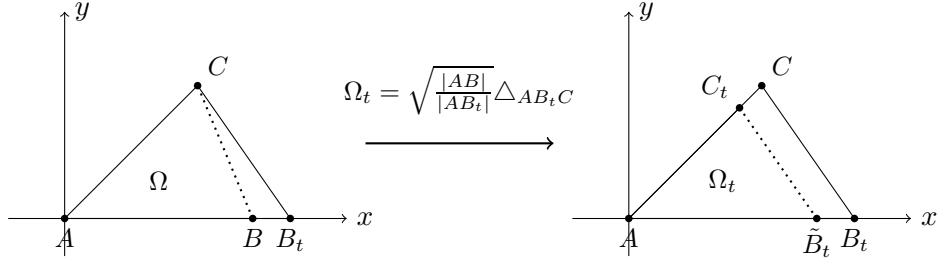

\begin{proof}[Proof of Theorem \ref{yangsheng1'}]
    Let $\alpha=\angle A$ and $\beta_t=\angle AB_tC=\angle A\tilde{B}_tC_t$. Without loss of generality, we assume that $|AB|=|AC|=1$.
Let 
    \begin{align*}
    G_t(x,y)=\left((1+t)x-t(\cot \alpha)y,y \right).
    \end{align*}
Such $G_t$ is constructed to be a linear mapping which maps $B$ to $B_t$ while keeping $A$ and $C$ fixed. Therefore, $G_t$ maps $\Omega=\triangle_{ABC}$ to the triangle $\triangle_{AB_tC}$, and hence
\begin{align*}
    F_t(x,y):=\frac{1}{\sqrt{1+t}}G_t(x,y)
\end{align*}
maps $\Omega$ to $\Omega_t$.

Let 
\begin{align*}
    \eta(t,x,y)=\frac{1}{2(1+t)}\left(x-2(\cot\alpha)y,-y\right).
\end{align*}
Such $\eta$ is obtained by direct computation to guarantee that $\tfrac{\partial}{\partial t}F_t(x,y)=\eta(t,F_t(x,y))$.

Recall that by Theorem \ref{Tshapederivative}, we have
\begin{align*}
    \frac{d}{dt}D(\Omega_t)=\int_{\partial \Omega_t}2V_{\Omega_t}((x,y)) \eta(t,x,y)\cdot  \nu\, ds.
\end{align*}
Note that on $AC_t$, $\nu=(-\sin\alpha, \cos\alpha)$, and thus
\begin{align*}
    \eta\cdot \nu=\frac{1}{2(1+t)}(-x\sin\alpha+(\cos\alpha)y)=0.
\end{align*}
On $A\tilde{B}_t$, $\nu=(0,-1)$, and thus
\begin{align*}
    \eta\cdot \nu=y=0.
\end{align*}
On $\tilde{B}_tC_t$, 
\begin{align*}
    \eta \cdot \nu=&\frac{1}{2(1+t)}\left(x-2\frac{\cos\alpha}{\sin \alpha}y,-y\right)\cdot (\sin \beta_t,\cos\beta_t)\\
    =&\frac{\sin\beta_t}{2(1+t)}\left(x-\left(2\frac{\cos\alpha}{\sin\alpha}+\frac{\cos \beta_t}{\sin \beta_t}\right)y\right).
\end{align*}
Let $M_t$ be the midpoint of $B_tC$ and $\theta_t=\angle M_tAB_t \in (0,\pi/2)$. 
By the law of sine,
\begin{align*}
    |AB_t|=\frac{\sin(\alpha+\beta_t)}{\sin \beta_t}, \quad |B_tM_t|=\frac{1}{2}|B_tC|=\frac{\sin \alpha}{2\sin\beta_t}.
\end{align*}
Again by the law of sine,
\begin{align*}
    \frac{\sin(\theta_t+\beta_t)}{|AB_t|}=\frac{\sin\theta_t}{|B_tM_t|} \quad \Rightarrow \quad   \cot \theta_t=2\frac{\cos\alpha}{\sin \alpha}+\frac{\cos\beta_t}{\sin \beta_t}.
\end{align*}
Hence, on $\tilde{B}_tC_t$,
\begin{align*}
    \eta\cdot \nu=\frac{\sin\beta_t}{2(1+t)}(x-(\cot\theta_t) y).
\end{align*}
Therefore, 
\begin{align*}
    \frac{d}{dt}D(\Omega_t)=\frac{\sin\beta_t}{\tan\theta_t}\frac{1}{1+t}\int_{\tilde{B}_tC_t}V_{\Omega_t}((x,y))((\tan \theta_t)x-y)\, ds.
\end{align*}
Now let $\tilde{M}_t$ be the intersection point of the line segment $AM_t$ and the line segment $\tilde{B}_tC_t$, and thus $\tilde{M}_t$ is exactly the midpoint of $\tilde{B}_tC_t$.

Note that for any $p_t=(x,y) \in \tilde{B}_t\tilde{M}_t$, we let $p_t'=(x',y')\in \tilde{M}_tC_t$ such that $|p_t'\tilde{M}_t|=|p_t\tilde{M}_t|$. When $t>0$, $|A\tilde{B}_t|>|AC_t|$, and thus by symmetry and Theorem \ref{symmetrycomparison},
we have
\begin{align*}
    0<x\tan\theta_t-y=-(x'\tan\theta_t-y'),\quad V_{\Omega_t}(p_t)<V_{\Omega_t} (p_t').
\end{align*}
Therefore, for $t>0$, we have
\begin{align*}
    \frac{d}{dt}D(\Omega_t)=\frac{\sin\beta_t}{\tan\theta_t}\frac{1}{(1+t)}\int_{\tilde{B}_t\tilde{M}_t}\left(V_{\Omega_t}(p_t)-V_{\Omega_t}(p_t')\right)\left(x\tan\theta-y\right)\, ds<0.
\end{align*}
This finishes the proof. 
\end{proof}

\begin{figure}[htp]
\centering
\begin{tikzpicture}[scale = 1.5]
\pgfmathsetmacro\d{180/pi};
\pgfmathsetmacro\S{sin(60)};
\pgfmathsetmacro\C{cos(60)};
\pgfmathsetmacro\m{\C+1.3};
\pgfmathsetmacro\mx{\C+1.3};
\pgfmathsetmacro\my{\S};
\pgfmathsetmacro\re{\my*(\S/(\C-1.3))+\mx};
\pgfmathsetmacro\px{1.2*\m};
\pgfmathsetmacro\pxx{0.8*\m};
\pgfmathsetmacro\py{\S/(\C-1.3)*(\px-2.6)};
\pgfmathsetmacro\pyy{\S/(\C-1.3)*(\pxx-2.6)};

\draw[->] (-0.5,0)--(3,0) node[below right] {$x$};
\draw[->] (0,-0.5)--(0,2.3) node[above left] {$y$};

\fill[green,opacity=0.4] (\re,0)--(2.6,0)--(\mx,\my)--cycle;
\fill[yellow,opacity=0.4] (\re,0)--(2*\C,2*\S)--(\mx,\my)--cycle;

\draw (0,0)--(2*\C,2*\S);
\draw (2*\C,2*\S)--(2.6,0);
\fill (0,0) circle (0.015) node[below left] {\small $A$};
\fill (2*\C,2*\S) circle (0.015) node[above right] {\small $C_t$};
\fill (2.6,0) circle (0.015) node[below right] {\small $\tilde{B}_{t}$};

\draw[samples=500,black, domain=0:pi/3, variable=\t] plot ({0.2*cos(\t*\d)},{0.2*sin(\t*\d)});
\node at (0.25,0.2) { $\alpha$};

\draw[samples=500,black, domain=3*pi/4:pi, variable=\t] plot ({2.6+0.2*cos(\t*\d)},{0.2*sin(\t*\d)});
\node at (2.3,0.15) {\small $\beta_{t}$};

\draw[samples=500,thick,dotted, domain=-0.5:2.5, variable=\t] plot ({\t},{\S/(1.3+\C)*\t});

\fill (1.3+\C,\S) circle (0.02) node[above] {\small $\tilde{M}_{t}$};

\draw[samples=500,black, domain=0:pi/7, variable=\t] plot ({0.5*cos(\t*\d)},{0.5*sin(\t*\d)});
\node at (0.7,0.15) {\small  $\theta_{t}$};

\fill (\pxx,\pyy) circle (0.015) node[above right] {\small $p'_{t}$};
\fill (\px,\py) circle (0.015) node[above right] {\small $p_{t}$};

\end{tikzpicture}
\caption{An illustration of the proof of Theorem \ref{yangsheng1'}. $\Omega_t=\triangle_{A\tilde{B}_tC_t}$. For any $t>0$, $|AB_t|>|AC|$. $\tilde{M}_t$ is the midpoint of 
$\tilde{B}_tC_t$, $\theta_t=\angle \tilde{B}_tA\tilde{M}_t$. If $|p_t \tilde{M}_t|=|p_t'\tilde{M}_t|$, then $V_{\Omega_t}(p_t)<V_{\Omega_t}(p_t')$.}
\label{fig:add2}
\end{figure}
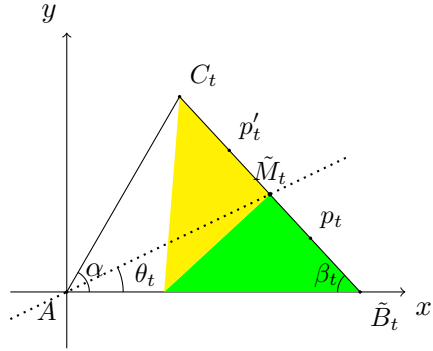

Next, we will prove Theorem \ref{zhenxi1}, and it suffices to prove the following theorem. 

\begin{theorem}
\label{zhenxi1'}
	Let $\Omega=\triangle _{ABC}$ be the quadrilateral with vertices $A=(-b,h)$, $B=(a,0)$, $C=(-a,0)$, where $a,b,h>0$.
	Let $A_t=((t-1)b,h)$ and $\Omega _t=\triangle _{A_tBC}$.
	Then, $D( \Omega_t)$ is strictly increasing when $-\infty< t\le 1$.
\end{theorem}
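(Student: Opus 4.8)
The plan is to apply the flow-based shape derivative formula (Theorem \ref{Tshapederivative}) exactly as in the proof of Theorem \ref{yangsheng2'}, together with the boundary comparison inequality for $V_\Omega$ from Proposition \ref{symmetrycomparison}. First I would identify the flow map realizing the family $\Omega_t$. The horizontal translation $A_t=((t-1)b,h)$ fixes the base $BC$ on the $x$-axis and slides the apex $A$ horizontally; note that the area $\tfrac12|BC|\cdot h=ah$ is automatically preserved since the height $h$ does not change, so \emph{no rescaling is needed} here. The natural generating vector field is the horizontal shear
\begin{align*}
\eta(t,x,y)=\frac{b}{h}\,(y,0),
\end{align*}
which is time-independent; one checks directly that the flow map $F_t(x,y)=(x+\tfrac{b}{h}ty,\,y)$ satisfies $\partial_t F_t=\eta\circ F_t$, fixes every point on the $x$-axis, and sends $A=(-b,h)$ to $A_t=((t-1)b,h)$, hence $F_t(\Omega)=\Omega_t$.

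Next I would compute $\tfrac{d}{dt}D(\Omega_t)=2\int_{\partial\Omega_t}V_{\Omega_t}\,(\eta\cdot\nu)\,ds$ side by side. On the base $CB$ the outward normal is $(0,-1)$, so $\eta\cdot\nu=0$ there. Thus only the two slanted sides $A_tB$ and $A_tC$ contribute, and on each the quantity $\eta\cdot\nu$ equals $\tfrac{b}{h}y$ times the $x$-component of the unit normal, i.e. it is proportional to $\pm y\sin(\text{angle})$. The key structural point is that the sign of $\eta\cdot\nu$ is opposite on the two slanted sides, so the two boundary integrals must be combined rather than estimated separately.

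The heart of the argument, and the step I expect to be the main obstacle, is the sign analysis via a reflection/midpoint pairing as in Theorem \ref{yangsheng2'}. For $t<1$ we have $|A_tC|>|A_tB|$ (the apex has moved toward $B$, lengthening the side over $C$), so Proposition \ref{symmetrycomparison} applies to the triangle $\Omega_t$ with the longer side identified correctly: pairing a point $P$ on one slanted side with its midpoint-symmetric partner $P'$ gives $V_{\Omega_t}(P')>V_{\Omega_t}(P)$, while the weight $\eta\cdot\nu=\tfrac{b}{h}y$ is symmetric in $y$ under this pairing and changes sign in exactly the complementary way needed. One must therefore choose the correct midpoint (the foot of the median from $A_t$, lying on the line through the origin and the midpoint of the opposite side) so that the line $y=(\tan\cdot)x$ splitting each side passes through that midpoint, exactly paralleling the roles of $M_t$ and $N_t$ in Theorem \ref{yangsheng2}. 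The delicate bookkeeping is to verify that for $-\infty<t\le 1$ the geometric inequality $|A_tC|\ge|A_tB|$ holds throughout (with equality only at the isosceles endpoint, where the derivative vanishes) and that the signs of the two integrals both come out positive, yielding $\tfrac{d}{dt}D(\Omega_t)>0$ on $(-\infty,1)$. This establishes that $D(\Omega_t)$ is strictly increasing for $-\infty<t\le 1$, completing the proof.
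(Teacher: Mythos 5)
Your flow setup is correct as far as it goes: the shear $F_t(x,y)=(x+\tfrac{b}{h}ty,\,y)$ is generated by $\eta=\tfrac{b}{h}(y,0)$, preserves area, kills the contribution of the base $BC$, and Theorem \ref{Tshapederivative} reduces the claim to
\begin{align*}
\frac{d}{dt}D(\Omega_t)=\frac{2b}{h}\int_0^h y\,\bigl(V_{\Omega_t}(P_B(y))-V_{\Omega_t}(P_C(y))\bigr)\,dy>0 ,
\end{align*}
where $P_B(y)$, $P_C(y)$ are the points of $A_tB$, $A_tC$ at height $y$. But the step you yourself flag as ``the main obstacle'' is a genuine gap, and it is precisely where this approach breaks down. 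The weight $\eta\cdot\nu\propto y$ does \emph{not} change sign along either slanted side, so there is no midpoint cancellation within a single side and Proposition \ref{symmetrycomparison} (which pairs points of \emph{one} side symmetrically about its midpoint) has nothing to act on; what you need is a comparison between the two \emph{different} sides at equal heights, which is supplied neither by Proposition \ref{symmetrycomparison} nor by Proposition \ref{symmetrycomparison2} (the latter pairs points equidistant from the common vertex $A_t$, and $P_B(y)$, $P_C(y)$ are not equidistant from $A_t$). Worse, the pointwise inequality you would need is false near the base: for $t<1$ one has $|A_tB|>|A_tC|$ (your inequality is reversed --- the apex lies on the $C$-side of the perpendicular bisector of $BC$), and applying Proposition \ref{symmetrycomparison} to the base $CB$ with its midpoint at the origin gives $V_{\Omega_t}(C)>V_{\Omega_t}(B)$, so by continuity the integrand above is \emph{negative} for small $y$. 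Any correct sign argument must therefore be genuinely integrated, and no lemma in the paper provides that.

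The paper's actual proof takes a different route and explicitly remarks that the flow argument of Theorem \ref{yangsheng1'} ``no longer works'' here: following \cite{BCT} and \cite{CHVY}, it slices $\Omega_t$ horizontally, writes $D(\Omega_t)=\int_0^h\int_0^h I_{K_l}[P_{x_2},P_{y_2}](t)\,dx_2\,dy_2$ as a sum of interactions of pairs of horizontal segments, and shows $\tfrac{d}{dt}I_{K_l}[P_{x_2},P_{y_2}](t)>0$ for each pair by reducing to an integral of the odd, negative-on-the-right function $K_l'$ over a rectangle whose non-symmetric part $\Sigma$ lies entirely in $\{x_1-y_1>0\}$. To salvage your plan you would have to prove the integrated inequality $\int_0^h y\,V_{\Omega_t}(P_B(y))\,dy>\int_0^h y\,V_{\Omega_t}(P_C(y))\,dy$ by independent means; as written, your proposal does not close this step.
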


The argument in the proof of Theorem \ref{yangsheng1'} no longer works. Instead, slightly adapting the argument in \cite{BCT} can conclude Theorem \ref{zhenxi1'}. The original idea is from \cite{CHVY}.

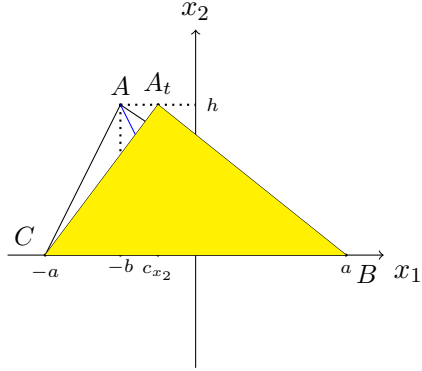
\begin{figure}[htp]
\centering
\begin{tikzpicture}
\draw (-2,0)--(-1,2)--(2,0)--(-2,0);
\draw[->] (-2.5,0)--(2.5,0) node[below right] {$x_{1}$};
\draw[->] (0,-1.5)--(0,3) node[above] {$x_{2}$};
\draw[dotted, thick] (-1,2)--(0,2) node[right] {\tiny $h$};
\draw[dotted, thick] (-1,2)--(-1,0);
\fill (-1,0) circle (0.02);
\node at (-1,-0.15) {\tiny $-b$};
\draw (-2,0)--(-0.5,2)--(2,0);
\fill (-2,0) circle (0.02) node[above left] {\small $C$};
\fill (-1,2) circle (0.02) node[above] {\small $A$};
\fill (-0.5,2) circle (0.02) node[above] {\small $A_{t}$};
\fill (2,0) circle(0.02) node[below right] {\small $B$};
\draw[blue] (-1,2)--(0,0);
\draw[thick,blue] (-0.5,1)--(-0.5,0);
\fill (-0.5,0) circle (0.02) node[below] {\tiny $c_{x_{2}}$};
\draw[dotted, thick] (-0.5,1)--(0,1) ;
\node[below] at (-2,0) {\tiny $-a$};
\node[below] at (2,0) {\tiny $a$};
\fill[yellow, opacity=0.3] (-2,0)--(-0.5,2)--(2,0)--(-2,0);
\end{tikzpicture}
\label{fig1}
\caption{Parallel movement of top vertex}
\end{figure}

\begin{proof}[Proof of Theorem \ref{zhenxi1'}]
    Let
    $$
	\varPhi _t\left( x_1,x_2 \right):=
		\left( x_1+\frac{x_2}{h}bt,x_2 \right).
	$$
Then $\varPhi_t$ is a smooth mapping that transforms $\Omega$ into $\Omega_t$, and $\Omega_1$ becomes an isosceles triangle with $|A_1C|=|A_1B|$.

For any $x_2\in \mathbb{R}$, we let
$$P_{x_2}=\{x_1\in \mathbb{R}: (x_1,x_2)\in \Omega\}.$$
Hence
\begin{align*}
    P_{x_2}=(c_{x_2}-r_{x_2},c_{x_2}+r_{x_2}),
\end{align*}
where $c_{x_2}=-x_2b/h$ and $r_{x_2}=a(h-x_2)/h$.

For any fixed $x_2,y_2\in (0,h)$, we define $K_l(r)=K(\sqrt{l^2+r^2})$, where $l=|x_2-y_2|$. Hence
\begin{align*}
    D(\Omega_t)=&\int_{\Omega_t}\left(\int_{\Omega_t} K\left(\sqrt{(x_1-y_1)^2+(x_2-y_2)^2}\right)\,dx_1dx_2\right)\, dy_1dy_2\\
    =& \int_0^h\int_0^hI_{K_l}[P_{x_2},P_{y_2}](t)\, dx_2dy_2,
\end{align*}
where
\begin{align*}
    I_{K_l}[P_{x_2},P_{y_2}](t)=\int_{\varPhi_t(P_{y_2})}\int_{\varPhi_t(P_{x_2})}K_l(x_1-y_1)dx_1dy_1.
\end{align*}
Since
\begin{align*}
    c_{x_2}-c_{y_2}=\frac{b}{h}(y_2-x_2),
\end{align*}
we can rewrite
\begin{align*}
    I_{K_l}[P_{x_2},P_{y_2}](t)=&\int_{c_{y_2}-r_{y_2}+\frac{b}{h}y_2t}^{c_{y_2}+r_{y_2}+\frac{b}{h}y_2t}\int_{c_{y_2}-r_{y_2}+\frac{b}{h}x_2t}^{c_{x_2}+r_{x_2}+\frac{b}{h}x_2t}K_l(x_1-y_1)\, dx_1dy_2\\
    =&\int_{-r_{y_2}}^{r_{y_2}}\int_{-r_{x_2}}^{r_{x_2}}K_l\left(x_1-y_1+c_{x_2}-c_{y_2}+\left(\frac{b}{h}x_2-\frac{b}{h}y_2\right)t\right)\,dx_1dy_1\\
    =&\int_{-r_{y_2}}^{r_{y_2}}\int_{-r_{x_2}}^{r_{x_2}}K_l\left(x_1-y_1+\frac{b}{h}(x_2-y_2)(t-1)\right)\,dx_1dy_1.
\end{align*}
Let $$M_t:=\frac{b}{h}(x_2-y_2)(t-1),$$and hence
\begin{align*}
    \frac{d}{dt}I_{K_l}[P_{x_2},P_{y_2}](t)=&\frac{b}{h}(x_2-y_2)\int_{-r_{y_2}}^{r_{y_2}}\int_{-r_{x_2}}^{r_{x_2}}K_l'\left(x_1-y_1+M_t\right)\,dx_1dy_1\\
    =&\frac{b}{h}(x_2-y_2)\int_{-r_{y_2}}^{r_{y_2}}\int_{-r_{x_2}+M_t}^{r_{x_2}+M_t}K_l'\left(x_1-y_1\right)\,dx_1dy_1.
\end{align*}
If $x_2<y_2$, then for $t<1$, $M_t>0$. Let $R_t=(-r_{x_2}+M_t,r_{x_2}+M_t)\times (-r_{y_2},r_{y_2})$, $E=R_t \cap \{(x_1,y_1): y_1>x_1\}$ and $E'$ be the reflection of $E$ about the origin. Let $\Sigma=R_t\setminus (E\cup E')$. Then in this case, $\Sigma$ is a non-empty rectangle lying inside $\{(x_1,y_1): x_1-y_1>0\}$, as illustrated in Figure \ref{figrecpicture}. Since $K_l'(r)$ is an odd function and $K_l'(r)<0$ if $r>0$, we have 
\begin{align*}
    \frac{d}{dt}I_{K_l}[P_{x_2},P_{y_2}](t)=\frac{b}{h}(x_2-y_2)\iint_{\Sigma}K_l'(x_1-y_1)\, dx_1dy_1>0.
\end{align*}
\begin{figure}[htp]
\centering
\begin{tikzpicture}
\draw[->] (-3,0)--(3,0) node[below right] {$x_{1}$};
\draw[->] (0,-2)--(0,2) node[above left] {$y_{1}$};
\draw[dashed] (-2,1)--(2,1)--(2,-1)--(-2,-1)--(-2,1);
\node[above right] at (0,1) {\small $r_{y_{2}}$};
\node[below right] at (0,-1) {\small $-r_{y_{2}}$};
\draw (-1.5,1)--(2.5,1)--(2.5,-1)--(-1.5,-1)--(-1.5,1);
\fill (-1.5,1) circle(0.03);\fill (-1.5,-1) circle(0.03);
\fill (-1.5,0) circle(0.03);\fill (2.5,0) circle(0.03);\fill (2.5,-1) circle(0.03);\fill (2.5,1) circle(0.03);\fill (1.5,-1) circle(0.03);\fill (1.5,1) circle(0.03);
\node[below] at (-1.5,0) {\tiny $-r_{x_{2}}+M_{t}$};
\node[below] at (2.5,0) {\tiny $r_{x_{2}}+M_{t}$};
\node[above] at (-1.5,1) {\tiny $P_{1}$};\node[above] at (2.5,1) {\tiny $P_{2}$};\node[below] at (2.5,-1) {\tiny $P_{3}$};\node[below] at (-1.5,-1) {\tiny $P_{4}$};
\node[above right] at (1.5,1) {\tiny $P_{1}^{'}$};\node[below] at (1.5,-1) {\tiny $P_{4}^{'}$};
\draw[smooth,domain=-1.7:1.7, variable=\t] plot 
({\t}, {\t}) node[above] {\small $y_{1}=x_{1}$};
\draw[dashed] (1.5,1)--(1.5,-1);
\fill[gray, opacity=0.2] plot (1.5,1)--(1.5,-1)--(2.5,-1)--(2.5,1)--(1.5,1);
\node at (2,0.7) {\small $\Sigma$};
\end{tikzpicture}
\caption{The case of $x_2<y_2$ and thus $M_t>0$.}
\label{figrecpicture}
\end{figure}
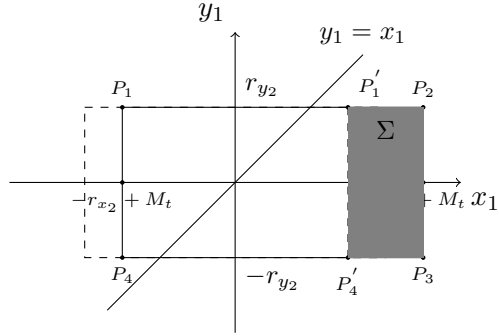

Similarly, if $x_2>y_2$, we still have $\tfrac{d}{dt}I_{K_l}[P_{x_2},P_{y_2}](t)>0$, for $t<1$.

Therefore, for any $-\infty<t<1$, we have
\begin{align*}
    \frac{d}{dt}D(\Omega_t)=\int_0^h\int_0^h \frac{d}{dt}I_{K_l}[P_{x_2},P_{y_2}](t)\, dx_2 dy_2>0. 
\end{align*}
\end{proof}

\section{Monotonicity results of $D(\cdot)$ on rhombuses and rectangles via vertical stretching flows}

In this section, we prove Theorem \ref{rhombustheoremc} and Theorem \ref{recderipos}. The proof is based on the same stretching flow argument as in the proof of Theorem \ref{yangsheng2}.

First, to prove Theorem \ref{rhombustheoremc}, it suffices to prove the following, which gives the monotonicity of $D(\cdot)$ along the continuous diagonal-stretching deformation on a rhombus.

\begin{theorem}
\label{equirh}
    Let $R_t, \, t\ge 0$ be a family of rhombuses with vertices $A=(-a,0)$, $B_t=(0,-a(1+t))$, $C=(a,0)$, $D_t=(0,a(1+t))$, where $a>0$. Let $\Omega_t=\tfrac{1}{\sqrt{1+t}}R_t$. Then,
    \begin{align*}
        \frac{d}{dt}D(\Omega_t)\le 0,
    \end{align*}
   where the equality holds if and only if $t=0$.
\end{theorem}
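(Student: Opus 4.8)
The plan is to follow the same stretching-flow strategy used in the proof of Theorem \ref{yangsheng2}, now applied to the rhombus. First I would normalize the picture: the rhombus $R_t$ has diagonals along the axes, with the horizontal diagonal $AC$ of half-length $a$ fixed in $R_t$ and the vertical diagonal $B_tD_t$ of half-length $a(1+t)$ being stretched. The rescaling $\Omega_t=\tfrac{1}{\sqrt{1+t}}R_t$ keeps the area constant (the factor $\tfrac{1}{1+t}$ in area from the isotropic scaling exactly cancels the $(1+t)$ growth of the vertical diagonal), so $\Omega_t$ is an area-preserving family. I would then write down the flow map
\begin{align*}
    F_t(x,y)=\frac{1}{\sqrt{1+t}}\left(x,(1+t)y\right),
\end{align*}
which is exactly the same map as in Theorem \ref{yangsheng2}, and verify that it is generated by the vector field $\eta(t,x,y)=\tfrac{1}{2(1+t)}(-x,y)$, so that Theorem \ref{Tshapederivative} applies verbatim.

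Next I would compute $\eta\cdot\nu$ on each of the four sides of $\Omega_t$. By the symmetry of the rhombus about both axes, it suffices to analyze one side and then combine the four by reflection. On the side in the first quadrant, joining the right vertex (on the positive $x$-axis) to the top vertex (on the positive $y$-axis), the outward normal has the form $(\sin\beta_t,\cos\beta_t)$ for an appropriate half-angle $\beta_t$, and as in Theorem \ref{yangsheng2} one gets
\begin{align*}
    \frac{1}{1+t}V_{\Omega_t}((x,y))(-x,y)\cdot\nu=\frac{\cos\beta_t}{1+t}V_{\Omega_t}((x,y))(y-x\tan\beta_t).
\end{align*}
The key geometric point is that the line $y=x\tan\beta_t$ through the origin passes through the \emph{midpoint} $M_t$ of that side. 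Thus the factor $y-x\tan\beta_t$ is negative on the lower half of the side (near the $x$-axis vertex) and positive on the upper half (near the $y$-axis vertex), and it is antisymmetric under reflection through $M_t$.

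The heart of the argument is then a reflection/pairing estimate on $V_{\Omega_t}$, analogous to Proposition \ref{symmetrycomparison}. For a point $p_t$ on the lower half of the side and its mirror image $p_t'$ across $M_t$ on the upper half, I need the comparison $V_{\Omega_t}(p_t)<V_{\Omega_t}(p_t')$ when $t>0$. Since the rhombus is symmetric about both diagonals, reflecting $\Omega_t$ about the perpendicular bisector of the side $M_tp_t$-direction leaves the "near" portion invariant and maps the remaining part strictly inside $\Omega_t$ precisely because $t>0$ makes the vertical diagonal strictly longer than the horizontal one (so the vertex closer to the $y$-axis "sees more mass"); when $t=0$ the rhombus is a square and the reflection is an exact symmetry, giving equality. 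I would make this rigorous by the same split-and-reflect computation as in Proposition \ref{symmetrycomparison}, writing $V_{\Omega_t}$ as an integral over the whole rhombus, decomposing into the part symmetric under the relevant reflection (which contributes equally to $p_t$ and $p_t'$) and the part that is strictly closer to $p_t'$ (which contributes strictly more to $V_{\Omega_t}(p_t')$ by the strict monotonicity of $K$). Pairing points across $M_t$ then yields
\begin{align*}
    \int_{\text{upper side}}\!\!V_{\Omega_t}(y-x\tan\beta_t)\,ds=\int_{\text{lower half}}\!\!\big(V_{\Omega_t}(p_t)-V_{\Omega_t}(p_t')\big)(y-x\tan\beta_t)\,ds<0,
\end{align*}
and summing the four congruent sides gives $\tfrac{d}{dt}D(\Omega_t)<0$ for $t>0$, with equality at $t=0$.

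The main obstacle I anticipate is establishing the strict comparison $V_{\Omega_t}(p_t)<V_{\Omega_t}(p_t')$ in this quadrilateral setting, since Propositions \ref{symmetrycomparison} and \ref{symmetrycomparison2} are stated for triangles, not rhombuses. The cleanest route is to exploit the rhombus's reflection symmetry about its diagonals directly: the diagonal $AC$ splits the rhombus into two congruent triangles, and one can apply a triangle-type reflection argument on the relevant half while using the axis symmetry to handle the other half. The delicate part is verifying that the relevant reflected region lands \emph{strictly} inside $\Omega_t$ exactly when $t>0$ and coincides with $\Omega_t$ when $t=0$; this dichotomy is what produces the strict inequality for $t>0$ and the equality characterization at $t=0$, and it must be checked carefully against the geometry of the stretched diagonal.
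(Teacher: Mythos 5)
Your overall architecture is exactly the paper's: the same flow $F_t(x,y)=\tfrac{1}{\sqrt{1+t}}(x,(1+t)y)$ with generator $\eta=\tfrac{1}{2(1+t)}(-x,y)$, reduction by symmetry to the single side $C_t\tilde D_t$ in the first quadrant, the observation that the line $y=x\tan\theta_t$ through the origin hits the midpoint $M_t$ of that side, and a pairing of points across $M_t$ against the antisymmetric weight $y-x\tan\theta_t$. However, the key comparison is stated with the wrong sign, and this is a genuine error, not a typo: you claim $V_{\Omega_t}(p_t)<V_{\Omega_t}(p_t')$ for $t>0$, where $p_t$ lies on the lower half (near the vertex $C_t$ on the fixed horizontal diagonal) and $p_t'$ is its mirror image near the top vertex $\tilde D_t$ on the stretched diagonal, justified by the heuristic that ``the vertex closer to the $y$-axis sees more mass.'' The opposite is true. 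For $t>0$ the rhombus is tall and thin, $\tilde D_t$ is the sharp vertex, and the reflection of $\Omega_t$ about the perpendicular bisector $L$ of $C_t\tilde D_t$ (the line through $M_t$ normal to that side, which swaps $p_t$ and $p_t'$) maps the small cap of $\Omega_t$ near $\tilde D_t$ into $\overline{\Omega_t}$ by convexity; the unpaired remainder of $\Omega_t$ lies entirely on the $C_t$-side of $L$, so the strict monotonicity of $K$ gives $V_{\Omega_t}(p_t)>V_{\Omega_t}(p_t')$ for $t>0$, with equality exactly at $t=0$ when $L$ is an axis of symmetry of the square. This is what the paper proves, and it is consistent with Proposition \ref{symmetrycomparison}, where the potential on a side is larger toward the endpoint adjacent to the shorter of the other two sides.

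The inconsistency is visible in your own final display: you correctly note that $y-x\tan\theta_t<0$ on the lower half, so with your claimed comparison the paired integrand $\bigl(V_{\Omega_t}(p_t)-V_{\Omega_t}(p_t')\bigr)(y-x\tan\theta_t)$ would be the product of two negative quantities, hence positive, giving $\tfrac{d}{dt}D(\Omega_t)>0$ rather than the asserted $<0$. In other words, you appear to have transplanted the comparison from Theorem \ref{yangsheng2} without noticing that the geometry is reversed here: in that theorem the vertical stretch improves the triangle and the extra mass sits on the $p_t'$ side, whereas here the stretch degrades the rhombus and the extra mass sits on the $p_t$ side. The fix is local --- replace the comparison by $V_{\Omega_t}(p_t)\ge V_{\Omega_t}(p_t')$ with equality iff $t=0$, proved by the reflection about $L$ described above (which also answers your worry about Propositions \ref{symmetrycomparison} and \ref{symmetrycomparison2} being stated only for triangles: the reflection argument applies directly to the convex rhombus, with no need to split along a diagonal) --- after which the rest of your computation closes correctly.
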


\begin{proof}
    Without loss of generality, we assume that $a=1$. 
Again, we let $$F_t(x,y)=\frac{1}{\sqrt{1+t}}\left(x, (1+t)y\right), \quad  \eta(t,x,y)=\frac{1}{2}\frac{1}{1+t}(-x,y).$$
Then $F_t(R_0)=\Omega_t$ and $\eta$ is the smooth vector field generating the flow map $F_t$.

Let $A_t=\tfrac{1}{\sqrt{1+t}}A$, $\tilde{B}_t=\tfrac{1}{\sqrt{1+t}}B_t$, $C_t=\tfrac{1}{\sqrt{1+t}}C$, and $\tilde{D}_t=\tfrac{1}{\sqrt{1+t}}D_t$.
By Theorem \ref{Tshapederivative} and the symmetry, we have
\begin{align*}
    \frac{d}{dt}D(\Omega_t)=\frac{4}{1+t}\int_{C_t\tilde{D}_t}V_{\Omega_t}((x,y))(-x,y)\cdot \nu\, ds,
\end{align*}

Note that on $C_t\tilde{D}_t$, 
\begin{align*}
    (-x,y)\cdot \nu=(-x,y)\cdot(\sin\theta_t,\cos\theta_t)=\cos\theta_t(y-(\tan\theta_t)x),
\end{align*}
where $\theta_t=\angle ACD_t$.

Hence
\begin{align*}
    \frac{d}{dt}D(\Omega_t)=\frac{4\cos\theta_t}{1+t}\int_{C_t\tilde{D}_t}V_{\Omega_t}((x,y))\left(y-(\tan\theta_t)x\right)\, ds.
\end{align*}

Let $M_t$ be the midpoint of $C_t\tilde{D}_t$. For any $p_t=(x,y)\in C_tM_t$ and $p'_t=(x',y')\in M_t\tilde{D}_t$ with $|p_tM_t|=|p_t'M_t|$, we have by the geometry of the 
rhombus and the reflection method (similar to the proof of Theorem \ref{symmetrycomparison}) that
\begin{align*}
    V_{\Omega_t}(p_t)\ge V_{\Omega_t}(p'_t),
\end{align*}
with $"="$ holding if and only if $t=0$.

Also, $-(y'-x'\tan \theta_t)=y-x\tan \theta_t<0$. Hence
\begin{align*}
    \frac{d}{dt}D(\Omega_t)=\frac{4\cos \theta_t}{1+t}\int_{C_tM_t}\left(V_{\Omega_t}(p_t)-V_{\Omega_t}(p_t')\right)(y-x\tan\theta_t)\, ds\le 0,
\end{align*}
where the equality holds if and only if $t=0$. This finishes the proof.

\end{proof}

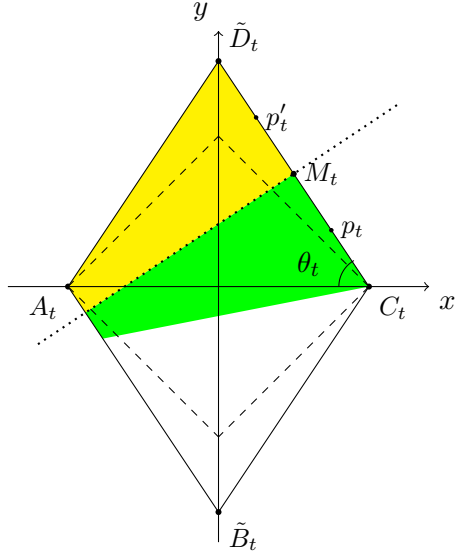
\begin{figure}[htp]
\centering
\begin{tikzpicture}[scale = 2]
\pgfmathsetmacro\a{1};
\pgfmathsetmacro\t{0.5};
\pgfmathsetmacro\d{180/pi};
\pgfmathsetmacro\at{(1+\t)*\a};
\pgfmathsetmacro\xt{1/(\a/\at+\at/\a)*(\a*\a/(2*\at)-\at-\at/2)};
\pgfmathsetmacro\yt{-\at/\a*(\xt+\a)};
\pgfmathsetmacro\rx{1/(\a/(2*\at)+\at/(2*\a))*(-\at+\a*\a/\at)};
\pgfmathsetmacro\ry{-\at/\a*(\rx+\a)};

\fill (-\a,0) circle(0.02) node[below left] {\small $A_t$};
\fill (\a,0) circle(0.02) node[below right] {\small $C_t$};

\fill[yellow,opacity=0.4] (\xt,\yt)--(-\a,0)--(0,\at)--(\a/2,\at/2)--cycle;
\fill[green,opacity=0.4] (\xt,\yt)--(\rx,\ry)--(\a,0)--(\a/2,\at/2)--cycle;
\draw (-\a,0)--(0, \at);
\draw (\a,0)--(0, \at);
\draw (-\a,0)--(0,-\at);
\draw (\a,0)--(0,-\at);

\draw[->] (-1.4*\a,0)--(1.4*\a,0) node[below right] {$x$};
\draw[->] (0,-1.7*\a)--(0,1.7*\a) node[above left] {$y$};

\draw[dashed] (-\a,0)--(0, \a);
\draw[dashed] (\a,0)--(0, \a);
\draw[dashed] (-\a,0)--(0,-\a);
\draw[dashed] (\a,0)--(0,-\a);

\fill (0,-\at) circle(0.02) node[below right] {\small $\tilde{B}_{t}$};
\fill (0,\at) circle(0.02) node[above right] {\small $\tilde{D}_{t}$};
\fill (\a/2,\at/2) circle(0.02) node[right] {\small $M_{t}$};

\fill (\a/4,3*\at/4) circle(0.015) node[right] {\small $p'_{t}$};
\fill (3*\a/4,\at/4) circle(0.015) node[right] {\small $p_{t}$};

\draw[samples=500,thick,black,dotted, domain=-1.2*\a:1.2*\a, variable=\s] plot ({\s},{\a/\at*(\s-\a/2)+\at/2});

\draw[samples=500,black, domain=2*pi/3:pi, variable=\t] plot ({\a+0.2*cos(\t*\d)},{0.2*sin(\t*\d)});
\node at (0.6*\a,0.15*\a) {$\theta_{t}$};

\end{tikzpicture}
\caption{An illustration of the proof of Theorem \ref{equirh}. The reflection of the yellow part is strictly inside the rhombus $\Omega_t$.}
\label{fig:ctmrhom}
\end{figure}

Next, we prove Theorem \ref{recderipos}. Clearly, to prove it, it suffices to prove the following theorem.

\begin{theorem}
\label{recderipos'}
    Let $R=\square ABCD$ be the square with vertices $A=(0,0)$, $B=(b,0)$, $C=(b,b)$ and $D=(0,b)$, $b>0$. Let $C_t=(b,(1+t)b)$, $D_t=(0,(1+t)b)$ and $R(t)=\square ABC_tD_t$ be the rectangle with vertices $A,B,C_t,D_t$. Let $\Omega_t=\tfrac{1}{\sqrt{1+t}}R(t)$. Then, for any $t\ge 0$,
    \begin{align*}
        \frac{d}{dt}D(\Omega_t)\le  0,
    \end{align*}
where the equality holds if and only if $t=0$.
\end{theorem}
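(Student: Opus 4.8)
The plan is to mirror the proof of Theorem \ref{equirh} (the rhombus case) almost verbatim, since the geometric setup is nearly identical: we stretch the square vertically while scaling to preserve area, using the very same flow map $F_t(x,y)=\tfrac{1}{\sqrt{1+t}}(x,(1+t)y)$ generated by the vector field $\eta(t,x,y)=\tfrac{1}{2(1+t)}(-x,y)$. First I would record that $F_t(R)=\Omega_t$ is the area-normalized rectangle $R(t)$, and that $\eta$ indeed generates $F_t$ (a one-line check identical to the one already done). Then by Theorem \ref{Tshapederivative},
\begin{align*}
    \frac{d}{dt}D(\Omega_t)=\frac{2}{1+t}\int_{\partial \Omega_t}V_{\Omega_t}((x,y))\,(-x,y)\cdot \nu\, ds.
\end{align*}

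Next I would exploit the symmetry of the rectangle to reduce the boundary integral. On the horizontal sides the normal is $(0,\pm 1)$, so $(-x,y)\cdot\nu=\pm y$; on the vertical sides the normal is $(\pm 1,0)$, so $(-x,y)\cdot\nu=\mp x$. Because $\Omega_t$ is symmetric about both coordinate axes and $V_{\Omega_t}$ inherits this symmetry, the four corner contributions organize into a single representative integral. After normalizing coordinates so that the center of the rectangle sits at the origin (I would shift $A,B,C_t,D_t$ to be symmetric about $O$, using translation invariance of $D(\cdot)$), the key term comes from the top horizontal side, where I must analyze $\int V_{\Omega_t}\cdot y\,ds$, and from the vertical sides, where the sign of $-x$ splits naturally at the axis. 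The whole computation should collapse to an integral over a half-side against a signed linear weight.

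The heart of the matter is the sign analysis, and this is where I would invoke the reflection/comparison machinery already developed. On each side I would pick the midpoint, pair points $p_t$ and $p_t'$ symmetric about that midpoint, and compare $V_{\Omega_t}(p_t)$ with $V_{\Omega_t}(p_t')$. The analogue of Proposition \ref{symmetrycomparison} for rectangles — which is exactly the reflection argument sketched in the proof of Proposition \ref{not-equidistribution-thmintro}, namely that when $|AB|>|AD|$ the reflection of the shorter portion lies strictly inside the rectangle — yields a strict inequality $V_{\Omega_t}(p_t)>V_{\Omega_t}(p_t')$ for $t>0$, with equality only when the rectangle is a square ($t=0$). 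Pairing this with the fact that the linear weight $(-x,y)\cdot\nu$ is antisymmetric under the same reflection, each paired contribution has a definite sign, forcing $\tfrac{d}{dt}D(\Omega_t)\le 0$ with equality iff $t=0$.

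The main obstacle I anticipate is making the reflection comparison rigorous on the correct side of the rectangle: unlike the triangle, where one reflects across an angle bisector, here I must verify that after vertical stretching the relevant reflected region genuinely sits inside $\Omega_t$ (strictly, when $t>0$) so that the monotonicity of $K$ gives a strict comparison, and that the equality case is characterized precisely by the square. I would handle this by reflecting across the line through the center making the appropriate angle with the stretched geometry — essentially the $y=x$-type line used in Proposition \ref{not-equidistribution-thmintro} adapted to the normalized rectangle — and checking the inclusion of the reflected triangle via an elementary angle comparison that degenerates exactly at $t=0$. Once the strict inclusion and the equality characterization are in hand, assembling the sign of the derivative is routine, exactly as in the rhombus proof.
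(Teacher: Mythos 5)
Your overall architecture coincides with the paper's: the same flow map $F_t(x,y)=\tfrac{1}{\sqrt{1+t}}(x,(1+t)y)$ and vector field $\eta=\tfrac{1}{2(1+t)}(-x,y)$, reduction to a boundary integral via Theorem \ref{Tshapederivative}, and an appeal to Proposition \ref{not-equidistribution-thmintro} for the sign. (Minor slip: with this $\eta$ the prefactor is $\tfrac{1}{1+t}$, not $\tfrac{2}{1+t}$.) However, the mechanism you describe for the sign analysis is wrong for a rectangle and would fail as stated. On an axis-parallel side the weight $(-x,y)\cdot\nu$ is \emph{constant} (on a vertical side it equals $\mp x$ with $x$ fixed; on a horizontal side it equals $\pm y$ with $y$ fixed), so it is not ``antisymmetric under reflection about the midpoint of the side,'' and pairing points $p_t,p_t'$ symmetric about a side's midpoint produces no cancellation: both points carry the same weight. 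The within-side midpoint pairing against a signed linear weight is the mechanism of the rhombus proof (Theorem \ref{equirh}), where the sides are slanted and $(-x,y)\cdot\nu=\cos\theta_t\,(y-x\tan\theta_t)$ genuinely changes sign at the midpoint; it degenerates here. Likewise there is no ``integral over a half-side against a signed linear weight'' to which the computation collapses.

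The correct (and simpler) reduction, which is what the paper does, is this: keeping the vertex $A$ at the origin so that $\eta\cdot\nu=0$ on the two sides through $A$, the derivative becomes
\begin{align*}
\frac{d}{dt}D(\Omega_t)=\frac{1}{1+t}\left(\frac{1}{|\tilde{C}_t\tilde{D}_t|}\int_{\tilde{C}_t\tilde{D}_t}V_{\Omega_t}\,ds-\frac{1}{|B_t\tilde{C}_t|}\int_{B_t\tilde{C}_t}V_{\Omega_t}\,ds\right),
\end{align*}
because the constant weights $\sqrt{1+t}$ and $-\tfrac{1}{\sqrt{1+t}}$ are exactly the reciprocals of the side lengths $|\tilde{C}_t\tilde{D}_t|=\tfrac{1}{\sqrt{1+t}}$ and $|B_t\tilde{C}_t|=\sqrt{1+t}$. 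The entire sign analysis is then the comparison of the \emph{averages} of $V_{\Omega_t}$ over the short side versus the long side, which is precisely Proposition \ref{not-equidistribution-thmintro} (whose own proof compares points on two \emph{different} sides reflected across the corner diagonal $y=x$, not points on one side reflected about its midpoint), with equality exactly when $t=0$ and the rectangle is a square. Since you do cite that proposition, your proof is repairable, but the intermediate reflection step you propose should be deleted rather than made rigorous --- it is not the step that carries the argument.
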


\begin{proof}
Without loss of generality, we assume $b=1$. As before, we let
    $$F_t(x,y)=\frac{1}{\sqrt{1+t}}\left(x, (1+t)y\right), \quad  \eta(t,x,y)=\frac{1}{2}\frac{1}{1+t}(-x,y).$$
Then $F_t(R)=\Omega_t$ and $\eta$ is the smooth vector field generating the flow map $F_t$.

Let $B_t=\tfrac{1}{\sqrt{1+t}}B$, $\tilde{C}_t=\tfrac{1}{\sqrt{1+t}}C_t$, and $\tilde{D}_t=\tfrac{1}{\sqrt{1+t}}D_t$. Then $\Omega=\square AB_t\tilde{C}_t\tilde{D}_t$ is the rectangle with vertices $A,B_t,\tilde{C}_t,\tilde{D}_t$. Clearly, $|B_t\tilde{C}_t|=\tfrac{1}{\sqrt{1+t}}|BC|=\sqrt{1+t}$, and $|\tilde{C}_t\tilde{D}_t|=\tfrac{1}{\sqrt{1+t}}$.

Since $\eta\cdot \nu=0$ on $AB_t\cup A\tilde{D}_t$, by Theorem \ref{Tshapederivative}, we have
\begin{align*}
    \frac{d}{dt}D(\Omega_t)=&\frac{1}{1+t}\int_{B_t\tilde{C}_t}V_{\Omega_t}((x,y))(-x,y)\cdot (1,0)\, ds+\frac{1}{1+t}\int_{\tilde{C}_t\tilde{D}_t}V_{\Omega_t}((x,y))(-x,y)\cdot (0,1)\, ds\\
    =&\frac{1}{1+t}\left(-\frac{1}{\sqrt{1+t}}\int_{B_t\tilde{C}_t}V_{\Omega_t}((x,y))\, ds+\sqrt{1+t}\int_{\tilde{C}_t\tilde{D}_t}V_{\Omega_t}((x,y))\, ds\right)\\
    =&\frac{1}{1+t}\left(\frac{1}{|\tilde{C}_t\tilde{D}_t|}\int_{\tilde{C}_t\tilde{D}_t}V_{\Omega_t}((x,y))\, ds-\frac{1}{|B_t\tilde{C}_t|}\int_{B_t\tilde{C}_t}V_{\Omega_t}((x,y))\, ds\right)
\end{align*}
When $t=0$, $|\tilde{C}_t\tilde{D}_t|=|B_t\tilde{C}_t|=1$, and hence $\tfrac{d}{dt}D(\Omega_t)=0$. When $t>0$, $|\tilde{C}_t\tilde{D}_t|<|B_t\tilde{C}_t|$, and hence by Proposition \ref{not-equidistribution-thmintro} and above, $\tfrac{d}{dt}D(\Omega_t)<0$. This finishes the proof. 
\end{proof}

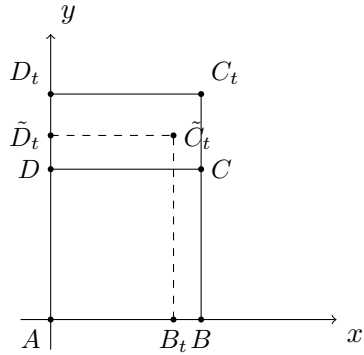
\begin{figure}[htp]
\centering
\begin{tikzpicture}[scale = 2]
\pgfmathsetmacro\xlength{1.5};
\pgfmathsetmacro\ylength{2};
\pgfmathsetmacro\Mx{1/2*\xlength};
\pgfmathsetmacro\My{1/2*\ylength};
\pgfmathsetmacro\t{0.5};

\fill (0, 0) circle (0.02) node[below left] {\small ${A}$};
\fill (1, 0) circle (0.02) node[below] {\small ${B}$};
\fill (1, 1) circle (0.02) node[right] {\small $C$};
\fill (1, 1.5) circle (0.02) node[above right] {\small ${C}_t$};
\fill (0, 1) circle (0.02) node[left] {\small ${D}$};
\fill (0, 1.5) circle (0.02) node[above left] {\small ${D}_t$};
\fill (0, 0.8165*1.5) circle (0.02) node[left] {\small $\tilde{D}_t$};
\fill (0.8165,0) circle (0.02) node[below] {\small $B_t$};
\fill (0.8165, 0.8165*1.5) circle (0.02) node[right] {\small $\tilde{C}_t$};

\draw[->] (-0.2,0)--(1.9,0)node[below right] {$x$};
\draw (0,1)--(1,1);
\draw (1,0)--(1,1.5);
\draw (1,1.5)--(0,1.5);
\draw[->] (0,-0.2)--(0,1.9)node[above right] {$y$};
\draw [dashed] (0,0.8165*1.5)--(0.8165*1,0.8165*1.5);
\draw [dashed] (0.8165,0)--(0.8165,0.8165*1.5);

\end{tikzpicture}
\caption{An illustration of Theorem \ref{recderipos'}. $|\square AB_t\tilde{C}_t\tilde{D}_t|=|\square ABCD|$}
\label{fig-rectangle-1}
\end{figure}

\section{Continuous monotonic deformation of quadrilaterals}

In this section, we first prove Theorem \ref{quadritheorem}. The proof will be divided into two cases. 

Case 1: $A_0$ and $C_0$ lie on the same side of the $x_2$-axis. That is, $x_Ax_C>0$, where $x_A$ and $x_C$ are the first coordinates of $A_0$ and $C_0$, respectively. Then it suffices to prove the following:

\begin{proposition}
    \label{T2}
  	Let $\Omega=\Box _{ABCD}$ be the square with vertices $A=(-b_1,h_1)$, $B=(a,0)$, $C=(-b_2,-h_2)$, $D=(-a,0)$, where $a,b_1,b_2,h_1,h_2>0$.
	Let $A_t=(-b_1(1-t),h_1)$ and $C_t=(-b_2(1-t),-h_2)$, $\Omega _t=\Box _{A_tBC_tD}$. Then,
	$D\left( \Omega _t \right)$ is strictly increasing when $-\infty<t\leqslant 1$.  
\end{proposition}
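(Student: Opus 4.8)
The plan is to run the horizontal layer-cake (slicing) scheme from the proof of Theorem \ref{zhenxi1'}, adapted to the present quadrilateral. The key geometric observation is that $\Box_{A_tBC_tD}$ splits along its horizontal diagonal $DB$ (which lies on the $x_1$-axis) into an upper triangle $\triangle_{DA_tB}$ and a lower triangle $\triangle_{DC_tB}$, so every horizontal slice $P_{x_2}(t)=\{x_1:(x_1,x_2)\in\Omega_t\}$ is a single interval $(c_{x_2}(t)-r_{x_2},\,c_{x_2}(t)+r_{x_2})$. Computing the two bounding edges in each half shows that the half-width is independent of $t$,
\[
r_{x_2}=a\Bigl(1-\tfrac{x_2}{h_1}\Bigr)\ \ (x_2>0),\qquad r_{x_2}=a\Bigl(1-\tfrac{|x_2|}{h_2}\Bigr)\ \ (x_2<0),
\]
while the center moves linearly in $t$: $c_{x_2}(t)=-\tfrac{b_1(1-t)}{h_1}x_2$ for $x_2>0$ and $c_{x_2}(t)=\tfrac{b_2(1-t)}{h_2}x_2$ for $x_2<0$. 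Thus the deformation acts slice-by-slice as a length-preserving horizontal translation, exactly as in a continuous Steiner symmetrization, and no explicit flow vector field is needed.

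With $K_l(r)=K(\sqrt{l^2+r^2})$ and $l=|x_2-y_2|$, I would then write
\[
D(\Omega_t)=\int\!\!\int I_{K_l}[P_{x_2},P_{y_2}](t)\,dx_2\,dy_2,\qquad
I_{K_l}[P_{x_2},P_{y_2}](t)=\int_{-r_{y_2}}^{r_{y_2}}\!\!\int_{-r_{x_2}}^{r_{x_2}}K_l\bigl(u-v+M_t\bigr)\,du\,dv,
\]
where, after recentering each slice, the only $t$-dependence sits in the center gap $M_t:=c_{x_2}(t)-c_{y_2}(t)$. Differentiating gives $\tfrac{d}{dt}I_{K_l}=\dot M_t\iint_{R_t}K_l'(u-v+M_t)\,du\,dv$, and since $K_l'$ is odd with $K_l'(r)<0$ for $r>0$, the rectangle $\Sigma$ reflection argument of Theorem \ref{zhenxi1'} (Figure \ref{figrecpicture}) yields $\sgn\bigl(\iint_{R_t}K_l'\bigr)=-\sgn(M_t)$. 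Hence the sign of $\tfrac{d}{dt}I_{K_l}$ equals that of $-M_t\dot M_t$, so $\tfrac{d}{dt}I_{K_l}\ge 0$ precisely when $M_t\dot M_t\le 0$, i.e. when $|M_t|$ is nonincreasing.

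The heart of the proof is verifying $M_t\dot M_t\le 0$ for $t\le 1$, split into three cases by the signs of $x_2,y_2$. When both heights are positive (and symmetrically when both are negative) one gets $M_t=-\tfrac{b_1(1-t)}{h_1}(x_2-y_2)$, hence $M_t\dot M_t=-\tfrac{b_1^2(1-t)}{h_1^2}(x_2-y_2)^2\le 0$, which is the same computation as in the triangle case. The main obstacle — the genuinely new feature absent from Theorem \ref{zhenxi1'} — is the cross term $x_2>0>y_2$, where the upper and lower slices are sheared in \emph{opposite} horizontal directions. There
\[
M_t=-(1-t)\Bigl(\tfrac{b_1}{h_1}x_2+\tfrac{b_2}{h_2}y_2\Bigr)=:-(1-t)\kappa,
\]
and although $\kappa$ can have either sign, the crucial point is that $M_t$ is a fixed scalar multiple of $(1-t)$, so $\dot M_t=\kappa$ always points opposite to $M_t$ and $M_t\dot M_t=-(1-t)\kappa^2\le 0$ for every $t\le 1$. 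Thus in all three cases $|M_t|$ decreases monotonically to $0$ as $t\uparrow 1$.

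Finally I would note that the inequality is strict off a set of $(x_2,y_2)$ of measure zero (namely $x_2=y_2$ in the like-sign cases and $\kappa=0$ in the cross case), so integrating over $(x_2,y_2)\in(-h_2,h_1)^2$ gives $\tfrac{d}{dt}D(\Omega_t)>0$ for every $t<1$, which by continuity up to $t=1$ is the claimed strict monotonicity on $(-\infty,1]$. The only remaining technical points are the standard justification for differentiating under the integral (a regularization of $K$ as used earlier in the excerpt) and the check that, for $t\le 1$, each horizontal line meets $\partial\Omega_t$ in a single segment so that the slices are genuinely intervals; both are routine given the explicit vertex positions.
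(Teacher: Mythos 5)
Your proposal is correct and follows essentially the same route as the paper: the same horizontal slicing with $t$-independent half-widths $r_{x_2}$ and linearly moving centers, the same decomposition into like-sign and cross terms, and the same reflection argument reducing everything to the sign of the center gap $M_t=(\tfrac{b_1}{h_1}x_2+\tfrac{b_2}{h_2}y_2)(t-1)$ in the cross case. Your reformulation of the sign condition as $M_t\dot M_t\le 0$ (i.e.\ $|M_t|$ nonincreasing) is just a cleaner unified phrasing of the paper's case-by-case "opposite signs" observation.
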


\begin{figure}[htp]
\centering
\begin{tikzpicture}
\draw (-2,0)--(-1,2)--(2,0)--(-1.5,-1)--(-2,0);
\draw[->] (-2.5,0)--(2.5,0) node[below right] {$x_{1}$};
\draw[->] (0,-1.5)--(0,3) node[above] {$x_{2}$};
\draw[dotted, thick] (-1,2)--(0,2) node[right] {\tiny $h_{1}$};
\draw[dotted, thick] (-1,2)--(-1,0);
\fill (-1,0) circle (0.02);
\node at (-1,-0.15) {\tiny $-b_{1}$};
\draw (-2,0)--(-0.5,2)--(2,0);
\fill (-2,0) circle (0.02) node[below left] {\small $D$};
\fill (-1,2) circle (0.02) node[above] {\small $A$};
\fill (-0.5,2) circle (0.02) node[above] {\small $A_{t}$};
\fill (2,0) circle(0.02) node[below right] {\small $B$};
\fill (-1.5,-1) circle(0.02) node[below] {\small $C$};
\draw[dotted,thick ] (-1.5,-1)--(0,-1) node[right] {\tiny $-h_{2}$};
\draw[dotted,thick ] (-1.5,-1)--(-1.5,0) node[above] {\tiny $-b_{2}$};
\draw[blue] (-1,2)--(0,0);
\draw[dotted, thick,blue] (-0.5,1)--(-0.5,0);
\fill (-0.5,0) circle (0.02) node[below] {\tiny $C_{x_{2}}$};
\node[below] at (-2,0) {\tiny $-a$};
\node[below] at (2,0) {\tiny $a$};
\draw (2,0)--(-0.75,-1)--(-2,0);
\fill (-0.75,-1) circle (0.02) node[below] {$C_{t}$};
\fill[yellow, opacity=0.3] (-2,0)--(-0.5,2)--(2,0)--(-0.75,-1)--(-2,0);
\end{tikzpicture}
\label{fig1}
\caption{$A$ and $C$ lie in the same side of the $x_2$-axis}
\end{figure}
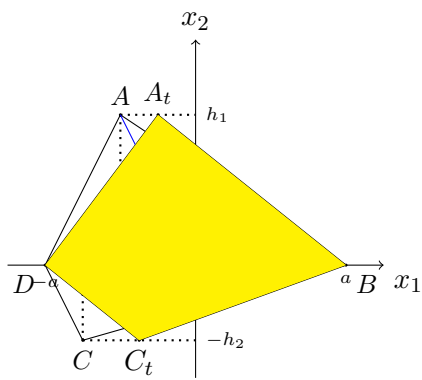

\vskip 0.2cm

\begin{proof}
    Let 
    $$
	\varPhi _t\left( x_1,x_2 \right) :=\begin{cases}
		\left( x_1+\frac{b_1}{h_1}x_2t,x_2 \right),\quad &\textit{$x_2\geqslant 0$}\\
		\left( x_1-\frac{b_2}{h_2}x_2t,x_2 \right),&\textit{$x_2<0$}\\
	\end{cases}
	\\$$
Then $\varPhi_t$ is a smooth mapping that transforms $\Omega$ into $\Omega_t$. As in the proof of Theorem \ref{zhenxi1'}, we let
$$P_{x_2}=\{x_1\in \mathbb{R}: (x_1,x_2)\in \Omega\}=\left( c_{x_2}-r_{x_2},c_{x_2}+r_{x_2} \right),$$
where in this case,
$$
	c_{x_2}=\begin{cases}
		-\frac{x_2}{h_1}b_1,\quad &\textit{$x_2\geqslant 0$}\\
		\frac{x_2}{h_2}b_2,&\textit{$x_2<0$}\\
	\end{cases}
	$$
and
$$
	r_{x_2}=\begin{cases}
		\frac{a}{h_1}\left( h_1-x_2 \right),\quad &\textit{$x_2\geqslant 0$}\\
		\frac{a}{h_2}\left( h_2+x_2 \right),&\textit{$x_2<0$}\\
	\end{cases}
	\\$$
For any fixed $x_2,y_2\in (0,h)$, we define $K_l(r)=K(\sqrt{l^2+r^2})$, where $l=|x_2-y_2|$. Hence
\begin{align*}
\frac{d}{dt}D(\Omega_t)=&\int_0^{h_1}\int_0^{h_1}\frac{d}{dt}I_{K_l}\left[ P_{x_2},P_{y_2} \right] \left( t \right)\, dx_2dy_2+2\int_{-h_2}^0 \int_0^{h_1}\frac{d}{dt}I_{K_l}\left[ P_{x_2},P_{y_2} \right] \left( t \right)\, dx_2dy_2\\
&+\int_{-h_2}^0\int_{-h_2}^0\frac{d}{dt}I_{K_l}\left[ P_{x_2},P_{y_2} \right] \left( t \right) \,dx_2dy_2\\
=:&I+II+III,
\end{align*}
where
\begin{align*}
    I_{K_l}[P_{x_2},P_{y_2}](t)=\int_{\varPhi_t(P_{y_2})}\int_{\varPhi_t(P_{x_2})}K_l(x_1-y_1)dx_1dy_1.
\end{align*}
As in the proof of Theorem \ref{zhenxi1'}, for $t<1$, we have $I>0$ and $III>0$.

For $x_2\in \left[ 0,h_1 \right]$, $y_2\in \left[ -h_2,0 \right) $, we have
	$$
	\begin{aligned}
		I_{K_l}\left[ P_{x_2},P_{y_2} \right] \left( t \right) &=\int_{c_{y_2}-r_{y_2}-\frac{b_2}{h_2}y_2t}^{c_{y_2}+r_{y_2}-\frac{b_2}{h_2}y_2t}\int_{c_{x_2}-r_{x_2}+\frac{b_1}{h_1}x_2t}^{c_{x_2}+r_{x_2}+\frac{b_1}{h_1}x_2t}K_l\left( x_1-y_1 \right) \,dx_1dy_1\\
		&=\int_{-r_{y_2}}^{r_{y_2}}\int_{-r_{x_2}}^{r_{x_2}}K_l\left( x_1-y_1+c_{x_2}-c_{y_2}+\frac{b_1}{h_1}x_2t+\frac{b_2}{h_2}y_2t \right)\, dx_1dy_1\\
	\end{aligned}
	$$
Let $$M_t:=c_{x_2}-c_{y_2}+\frac{b_1}{h_1}x_2t+\frac{b_2}{h_2}y_2t=\left( \frac{b_1}{h_1}x_2+\frac{b_2}{h_2}y_2 \right) \left( t-1 \right) $$
Then
\begin{align*}
    \frac{d}{dt}I_{K_l}\left[ P_{x_2},P_{y_2} \right] \left( t \right)=\left( \frac{b_1}{h_1}x_2+\frac{b_2}{h_2}y_2 \right)\int_{-r_{y_2}}^{r_{y_2}}\int_{-r_{x_2}+M_t}^{r_{x_2}+M_t}K_l'\left(x_1-y_1\right)\,dx_1dy_1.
\end{align*}
Since for $t<1$, $\tfrac{b_1}{h_1}x_2+\tfrac{b_2}{h_2}y_2$ and $M_t$ have opposite signs, similar to the proof of Theorem \ref{zhenxi1'}, we also have
$II>0$ for $t<1$.

Therefore, we conclude the proposition.
\end{proof}
\vskip 0.2cm

Case 2: $A_0$ and $C_0$ lie on two different sides of the $x_2$-axis. That is, $x_Ax_C\le 0$. Then it suffices to prove the following stronger result.

\begin{proposition}
    \label{T1}
	Let $\Omega=\Box _{ABCD}$ be the quadrilateral with vertices $A=(-b_1,h_1)$, $B=(a,0)$, $C=(b_2,-h_2)$, $D=(-a,0)$, where $a,b_1,h_1,h_2>0$, and $b_2\ge 0$.
	Let $A_t=(b_1(t-1),h_1)$ and $\Omega _t=\Box _{A_tBCD}$.
	Then, $D( \Omega _t)$ is strictly increasing when $-\infty<t\leqslant 1$.
\end{proposition}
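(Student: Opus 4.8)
The plan is to reuse the slicing machinery from Theorem \ref{zhenxi1'} and Proposition \ref{T2}, but with a transformation that shears only the \emph{upper} half of the quadrilateral, since here only the vertex $A$ moves while $B$, $C$, $D$ stay fixed. Concretely, I would set
\begin{align*}
\varPhi_t(x_1,x_2):=\begin{cases}\left(x_1+\tfrac{b_1}{h_1}x_2 t,\,x_2\right),&x_2\geqslant 0,\\[2pt](x_1,x_2),&x_2<0,\end{cases}
\end{align*}
and check that $\varPhi_t$ is a continuous, piecewise-linear, area-preserving map sending $A\mapsto A_t$ and fixing $B,C,D$, so that $\varPhi_t(\Omega)=\Omega_t$. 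Writing the horizontal slice as $P_{x_2}=(c_{x_2}-r_{x_2},c_{x_2}+r_{x_2})$, one finds $c_{x_2}=-\tfrac{b_1}{h_1}x_2$, $r_{x_2}=\tfrac{a}{h_1}(h_1-x_2)$ for $x_2\geqslant 0$, and $c_{x_2}=-\tfrac{b_2}{h_2}x_2$, $r_{x_2}=\tfrac{a}{h_2}(h_2+x_2)$ for $x_2<0$.

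Next I would decompose, exactly as in the proof of Proposition \ref{T2},
\begin{align*}
\frac{d}{dt}D(\Omega_t)=I+II+III,
\end{align*}
where $I$, $II$, $III$ collect $\tfrac{d}{dt}I_{K_l}[P_{x_2},P_{y_2}](t)$ over $\{x_2,y_2>0\}$, the (doubled, by symmetry) cross region $\{x_2>0>y_2\}$, and $\{x_2,y_2<0\}$, respectively. The term $I$ involves only upper slices, which transform exactly as the single triangle in Theorem \ref{zhenxi1'} (with $b,h$ replaced by $b_1,h_1$); hence $I>0$ for $t<1$ by that pointwise sign computation verbatim. The term $III$ involves only lower slices, which $\varPhi_t$ leaves fixed, so $I_{K_l}$ is $t$-independent there and $III=0$.

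The crux is the cross term $II$. For $x_2>0>y_2$ only the upper slice is sheared, so the argument of $K_l$ is translated by
\begin{align*}
M_t=c_{x_2}-c_{y_2}+\frac{b_1}{h_1}x_2 t=\frac{b_1}{h_1}x_2(t-1)+\frac{b_2}{h_2}y_2,
\end{align*}
giving $\tfrac{d}{dt}I_{K_l}[P_{x_2},P_{y_2}](t)=\tfrac{b_1}{h_1}x_2\iint_{R_t}K_l'(x_1-y_1)\,dx_1dy_1$, with $R_t$ the centered rectangle translated by $M_t$. The reflection argument of Theorem \ref{zhenxi1'} (see Figure \ref{figrecpicture}) shows this double integral has sign opposite to that of $M_t$. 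Here is where the hypothesis $b_2\geqslant 0$ is essential: for $t<1$, $x_2>0$, $y_2<0$, both summands in $M_t$ are $\leqslant 0$ and the first is strictly negative, so $M_t<0$; thus the double integral is positive, the prefactor $\tfrac{b_1}{h_1}x_2$ is positive, and $\tfrac{d}{dt}I_{K_l}>0$. Integrating yields $II>0$, and combining the three terms gives $\tfrac{d}{dt}D(\Omega_t)>0$ for $t<1$, proving strict monotonicity on $(-\infty,1]$.

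I expect the main obstacle to be precisely this cross term: one must verify that moving only the top vertex, which breaks the up-down symmetry exploited in Proposition \ref{T2}, does not spoil the sign. The key observation is that the condition $b_2\geqslant 0$ (equivalently, $A$ and $C$ lie on opposite sides of the $x_2$-axis) forces $M_t\leqslant 0$, which is exactly the sign the reflection argument needs to produce a positive contribution. Once this is settled, the upper-upper and lower-lower terms are routine.
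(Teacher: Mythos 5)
Your proposal is correct and coincides with the paper's own proof in every essential respect: the same half-plane shear $\varPhi_t$, the same decomposition into upper--upper, cross, and lower--lower slice contributions with $III=0$, the same translation parameter $M_t=\tfrac{b_1}{h_1}x_2(t-1)+\tfrac{b_2}{h_2}y_2$, and the same observation that $b_2\geqslant 0$ forces $M_t<0$ so that the reflection argument of Theorem \ref{zhenxi1'} gives a positive cross term. No gaps.
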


\begin{figure}[htp]
\centering
\begin{tikzpicture}
\draw (-2,0)--(-1,2)--(2,0)--(1,-1)--(-2,0);
\draw[->] (-2.5,0)--(2.5,0) node[below right] {$x_{1}$};
\draw[->] (0,-1.5)--(0,3) node[above] {$x_{2}$};
\draw[dotted, thick] (-1,2)--(0,2) node[right] {\tiny $h_{1}$};
\draw[dotted, thick] (-1,2)--(-1,0);
\fill (-1,0) circle (0.02);
\node at (-1,-0.15) {\tiny $-b_{1}$};
\draw (-2,0)--(-0.5,2)--(2,0);
\fill (-2,0) circle (0.02) node[above left] {\small $D$};
\fill (-1,2) circle (0.02) node[above] {\small $A$};
\fill (-0.5,2) circle (0.02) node[above] {\small $A_{t}$};
\fill (2,0) circle(0.02) node[below right] {\small $B$};
\fill (1,-1) circle(0.02) node[below] {\small $C$};
\draw[dotted,thick ] (1,-1)--(0,-1) node[left] {\tiny $-h_{2}$};
\draw[dotted,thick ] (1,-1)--(1,0) node[above] {\tiny $b_{2}$};
\draw[blue] (-1,2)--(0,0);
\draw[thick,blue] (-0.5,1)--(-0.5,0);
\fill (-0.5,0) circle (0.02) node[below] {\tiny $C_{x_{2}}$};
\node[below] at (-2,0) {\tiny $-a$};
\node[below] at (2,0) {\tiny $a$};
\fill[yellow, opacity=0.3] (-2,0)--(-0.5,2)--(2,0)--(1,-1)--(-2,0);
\end{tikzpicture}
\label{fig1}
\caption{$A$ and $C$ lie on two different sides of the $x_2$-axis}
\end{figure}
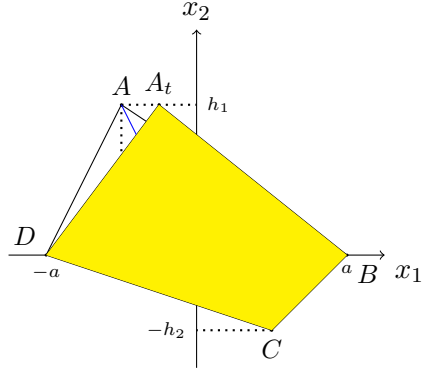

\begin{proof}
    Let
  $$
	\varPhi _t\left( x_1,x_2 \right):=
	\begin{cases}
		\left( x_1+\frac{x_2}{h_1}b_1t,x_2 \right), \quad &\textit{$x_2\geqslant 0$}\\
		\left( x_1,x_2 \right),&\textit{$x_2<0$}
	\end{cases},
	$$ and thus $\Omega_t=\varPhi_t(\Omega)$. Again,
    \begin{align*}
\frac{d}{dt}D(\Omega_t)=&\int_0^{h_1}\int_0^{h_1}\frac{d}{dt}I_{K_l}\left[ P_{x_2},P_{y_2} \right] \left( t \right)\, dx_2dy_2+2\int_{-h_2}^0 \int_0^{h_1}\frac{d}{dt}I_{K_l}\left[ P_{x_2},P_{y_2} \right] \left( t \right)\, dx_2dy_2\\
&+\int_{-h_2}^0\int_{-h_2}^0\frac{d}{dt}I_{k_l}\left[ P_{x_2},P_{y_2} \right] \left( t \right) \,dx_2dy_2\\
=:&I+II+III,
\end{align*}
where
$$
	c_{x_2}=\begin{cases}
		-\frac{x_2}{h_1}b_1, \quad &\textit{$x_2\geqslant 0$}\\
		-\frac{x_2}{h_2}b_2,&\textit{$x_2<0$}\\
	\end{cases}
	$$

	$$
	r_{x_2}=\begin{cases}
		\frac{a}{h_1}\left( h_1-x_2 \right), \quad &\textit{$x_2\geqslant 0$}\\
		\frac{a}{h_2}\left( h_2+x_2 \right),&\textit{$x_2<0$}\\
	\end{cases}
	$$
	and
	$$
	P_{x_2}:=\left\{ x_1\in \mathbb{R} , \left( x_1,x_2 \right) \in \Omega \right\} =\left( c_{x_2}-r_{x_2},c_{x_2}+r_{x_2} \right).
	$$
Similar to the proof of Theorem \ref{zhenxi1'}, $I>0$. Also, it is clear that $III=0$.

If $0<x_2<h_1$ and $-h_2<y_2<0$, then 
\begin{align*}
    I_{K_l}[P_{x_2},P_{y_2}](t)=&\int_{c_{y_2}-r_{y_2}}^{c_{y_2}+r_{y_2}}\int_{c_{x_2}-r_{x_2}+\frac{b_1}{h_1}x_2t}^{c_{x_2}+r_{x_2}+\frac{b_1}{h_1}x_2t} K_l(x_1-y_1)\, dx_1dy_1\\
    =& \int_{-r_{y_2}}^{r_{y_2}}\int_{-r_{x_2}}^{r_{x_2}}K_l\left(x_1-y_1+c_{x_2}+\frac{b_1}{h_1}x_2t-c_{y_2}\right)\, dx_1dy_1\\
    =&\int_{-r_{y_2}}^{r_{y_2}}\int_{-r_{x_2}}^{r_{x_2}}K_l\left(x_1-y_1+M_t\right)\, dx_1dy_1,
\end{align*}
where
\begin{align}
\label{bx}
    M_t=-\frac{x_2}{h_1}b_1+\frac{y_2}{h_2}b_2+\frac{b_1}{h_1}x_2t=\frac{b_1}{h_1}x_2(t-1)+\frac{y_2}{h_2}b_2.
\end{align}
Hence
\begin{align*}
    II=2\int_{-h_2}^0\int_0^{h_1} \frac{b_1x_2}{h_1}\left(\int_{-r_{y_2}}^{r_{y_2}}\int_{-r_{x_2}+M_t}^{r_{x_2}+M_t}K_l'(x_1-y_1)\, dx_1 dy_1\right)\, dx_2dy_2.
\end{align*}
From \eqref{bx}, when $t<1$, $x_2>0$ and $y_2<0$, we have $M_t<0$. Then, the similar argument in the proof of Theorem \ref{zhenxi1'} implies that $II>0$.

Therefore, we conclude that $D(\Omega_t)$ is a strictly increasing function for $-\infty<t\le 1$. 

\end{proof}

Combining Proposition \ref{T2} and Proposition \ref{T1}, we obtain Theorem \ref{quadritheorem}. 

\vskip 0.2cm
We note that any quadrilateral must be of either type I (the case in Proposition \ref{T2}) or type II (the case in Proposition \ref{T1}). The following pictures Figure \ref{type1} and Figure \ref{fig:cmd2} illustrate a continuous deformation of an arbitrary quadrilateral into a square, along which $D(\cdot)$ is strictly increasing, as a consequence of Theorem \ref{rhombustheoremc}, Proposition \ref{T2} and Proposition \ref{T1}. This gives another proof of the fact that the square maximizes $D(\cdot)$ among quadrilaterals with a given area, without resorting to the limit process of step-by-step Steiner symmetrizations as used in \cite{BCT}.

\begin{figure}[htp]
	\centering
	\begin{tikzpicture}
		\pgfmathsetmacro\step{0.01}
		\pgfmathsetmacro\transa{5}
		\pgfmathsetmacro\transb{10}
		
		\pgfmathsetmacro\transc{3}
		\pgfmathsetmacro\transd{9}
		
		\pgfmathsetmacro\down{5}
		
		\pgfmathsetmacro\r{sqrt(3/2)}
		\fill (-1,0) circle(0.03) node[left] {\tiny $A^{1}$};
		\fill (0.3,-2) circle(0.03) node[below] {\tiny $B^{1}$};
		\fill (1,0) circle(0.03) node[right] {\tiny $C^{1}$};
		\fill (-0.5,1) circle(0.03) node[above] {\tiny $D^{1}$};
		
		\draw (-1,0)--(-0.5,1)--(1,0)--(0.3,-2)--(-1,0);
		\draw[dotted] (-1,0)--(1,0);
		\draw[gray] (0,-2.5)--(0,1.5);
		\fill[yellow, opacity=0.2] (-1,0)--(-0.5,1)--(1,0)--(0.3,-2)--(-1,0);
		
		\draw[->,thick] (1.7,0)--(\transa-1.7,0);
		\node[below left] at (\transa-2,0) {\small \textit{Step 1}};
        \node[above] at (\transa-2.5,0) {\small \textit{Proposition \ref{T1}}};

		\fill (-1+\transa,0) circle(0.03) node[left] {\tiny $A^{2}$};
		\fill (0.3+\transa,-2) circle(0.03) node[below] {\tiny $B^{2}$};
		\fill (1+\transa,0) circle(0.03) node[right] {\tiny $C^{2}$};
		\fill (0+\transa,1) circle(0.03) node[above] {\tiny $D^{2}$};
		
		\draw[red, ->,thick] (-0.5+\transa,1)--(\transa-\step,1);
		\draw[dashed] (-1+\transa,0)--(-0.5+\transa,1)--(1+\transa,0)--(0.3+\transa,-2)--(-1+\transa,0);
		\draw (-1+\transa,0)--(0+\transa,1)--(1+\transa,0)--(0.3+\transa,-2)--(-1+\transa,0);
		\draw[gray] (0+\transa,-2.5)--(0+\transa,1.5);
		\draw[dotted] (-1+\transa,0)--(1+\transa,0);
		\fill[yellow, opacity=0.2] (-1+\transa,0)--(0+\transa,1)--(1+\transa,0)--(0.3+\transa,-2)--(-1+\transa,0);
		
		\draw[->,thick] ({\transa+1.7},0)--({\transb-1.7},0);
		\node[below] at (\transb-2.5,0) {\small \textit{Step 2}};
        \node[above] at (\transb-2.5,0) {\small \textit{Proposition \ref{T1}}};
		
		\fill (-1+\transb,0) circle(0.03) node[left] {\tiny $A^{3}$};
		\fill (\transb,-2) circle(0.03) node[below] {\tiny $B^{3}$};
		\fill (1+\transb,0) circle(0.03) node[right] {\tiny $C^{3}$};
		\fill (0+\transb,1) circle(0.03) node[above] {\tiny $D^{3}$};
		\draw[->,red,thick] (0.3+\transb,-2)--(\transb+\step,-2);
		
		\draw[dashed] (-1+\transb,0)--(0+\transb,1)--(1+\transb,0)--(0.3+\transb,-2)--(-1+\transb,0);
		\draw[gray] (0+\transb,-2.5)--(0+\transb,1.5);
		\draw[dotted] (-1+\transb,0)--(1+\transb,0);
		\draw (-1+\transb,0)--(0+\transb,1)--(1+\transb,0)--(\transb,-2)--(-1+\transb,0);
		\fill[yellow, opacity=0.2] (-1+\transb,0)--(0+\transb,1)--(1+\transb,0)--(\transb,-2)--(-1+\transb,0);
		
		\fill (-1+\transc,-0.5-\down) circle(0.03) node[left] {\tiny $A^{4}$};
		\fill (\transc,-2-\down) circle(0.03) node[below] {\tiny $B^{4}$};
		\fill (1+\transc,-0.5-\down) circle(0.03) node[right] {\tiny $C^{4}$};
		\fill (0+\transc,1-\down) circle(0.03) node[above] {\tiny $D^{4}$};
		\draw[->,red,thick] (-1+\transc,0-\down)--(-1+\transc,-0.5-\step-\down);
		\draw[->,red,thick] (1+\transc,0-\down)--(1+\transc,-0.5-\step-\down);
		
		\draw[dashed] (-1+\transc,0-\down)--(0+\transc,1-\down)--(1+\transc,0-\down)--(\transc,-2-\down)--(-1+\transc,0-\down);
		\draw (-1+\transc,-0.5-\down)--(0+\transc,1-\down)--(1+\transc,-0.5-\down)--(\transc,-2-\down)--(-1+\transc,-0.5-\down);
		\draw[gray] (0+\transc,-2.5-\down)--(0+\transc,1.5-\down);
		\draw[dotted] (-1+\transc,0-\down)--(1+\transc,0-\down);
		\draw[gray] (-1.2+\transc,-0.5-\down)--(1.2+\transc,-0.5-\down);
		\fill[yellow, opacity=0.2] (-1+\transc,-0.5-\down)--(0+\transc,1-\down)--(1+\transc,-0.5-\down)--(\transc,-2-\down)--(-1+\transc,-0.5-\down);
		
		\draw[->,thick] ({-\transc+3},0-\down)--({-\transc+4.5},0-\down);
		\node[below left] at ({-\transc+4.2},0-\down) {\small \textit{Step 3}};
        \node[above] at ({-\transc+3.7},0-\down) {\small \textit{Proposition \ref{T2}}};
		
		\fill (-\r+\transd, -0.5-\down) circle(0.03) node[left] {\tiny $A^{5}$};
		\fill (0+\transd,-\r-0.5-\down) circle(0.03) node[below] {\tiny $B^{5}$};
		\fill (\r+\transd,-0.5-\down) circle(0.03) node[right] {\tiny $C^{5}$};
		\fill (0+\transd,\r-0.5-\down) circle(0.03) node[above] {\tiny $D^{5}$};
		\draw[->, red,thick] (-1+\transd,-0.5-\down)--(-\r+\transd+\step, -0.5-\down);
		\draw[->,red,thick] (0+\transd,1-\down)--(0+\transd,\r-0.5-\down+\step);
		\draw[->,red,thick] (1+\transd,-0.5-\down)--(\r+\transd,-0.5-\down-\step);
		\draw[->,red,thick] (\transd,-2-\down)--(0+\transd,-\r-0.5-\down);
		
		\draw[dashed] (-1+\transd,-0.5-\down)--(0+\transd,1-\down)--(1+\transd,-0.5-\down)--(\transd,-2-\down)--(-1+\transd,-0.5-\down);
		\draw (-\r+\transd, -0.5-\down)--(0+\transd,\r-0.5-\down)--(\r+\transd,-0.5-\down)--(0+\transd,-\r-0.5-\down)--(-\r+\transd,-0.5-\down);
		\draw[gray] (-1.4+\transd,-0.5-\down)--(1.4+\transd,-0.5-\down);
		\draw[gray] (0+\transd,-2.5-\down)--(0+\transd,1.5-\down);
		\fill[yellow, opacity=0.2] (-\r+\transd, -0.5-\down)--(0+\transd,\r-0.5-\down)--(\r+\transd,-0.5-\down)--(0+\transd,-\r-0.5-\down)--(-\r+\transd,-0.5-\down);
		
		\draw[->,thick] ({\transd-3.5},0-\down)--({\transd-2},0-\down);
		\node[below left] at ({\transd-2.3},0-\down) {\small \textit{Step 4}};
        \node[above] at ({\transd-2.8},0-\down) {\small \textit{Theorem \ref{rhombustheoremc}}};
	
    \end{tikzpicture}

	\caption{Type I: continuous monotonic deformation for $D(\cdot)$}
	\label{type1}
\end{figure}
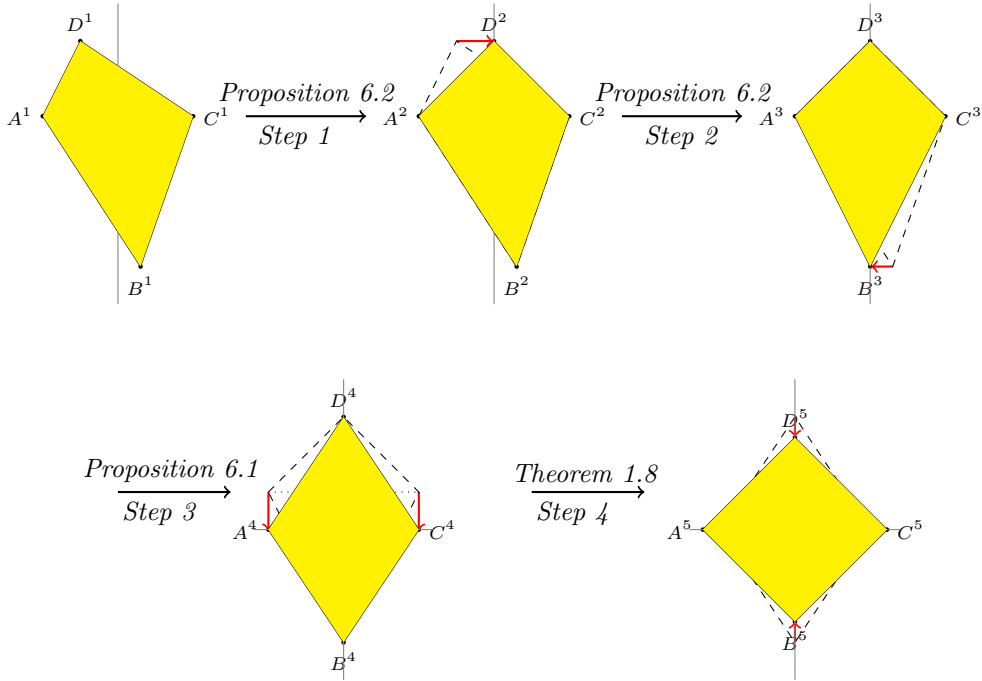

\vskip 0.3cm

\begin{remark}
    In fact, as can be observed from Proposition \ref{T1}, the first and second steps in Figure \ref{type1} can be carried out simultaneously. This establishes the monotonicity of property $D(\cdot)$ under the classical continuous Steiner symmetrization with respect to the direction perpendicular to the diagonal, thereby proving Theorem \ref{quadritheorem}. 
\end{remark}

\begin{remark}
\label{partialsymmetrization}
    Proposition \ref{T1} indeed gives a stronger result: within the Type I case, we can further   decompose the continuous Steiner symmetrization process as a two-step partial continuous symmetrization: continuously symmetrizing one portion and leaving the other one fixed. The partial continuous symmetrization still guarantees the strict monotonicity property of $D(\cdot)$, as illustrated in Figure \ref{type1}.
\end{remark}

\vskip 0.3cm
\begin{figure}[htp]
	\centering
	\begin{tikzpicture}[scale=0.95]
		\pgfmathsetmacro\transa{5}
		\pgfmathsetmacro\transb{10}
		\pgfmathsetmacro\transc{15}
		\pgfmathsetmacro\r{sqrt(3/2)}
		
		\fill (-1,0) circle(0.03) node[left] {\tiny $A^{1}$};
		\fill (-0.3,-2) circle(0.03) node[below] {\tiny $B^{1}$};
		\fill (1,0) circle(0.03) node[right] {\tiny $C^{1}$};
		\fill (-0.5,1) circle(0.03) node[above] {\tiny $D^{1}$};
		
		\draw (-1,0)--(-0.3,-2)--(1,0)--(-0.5,1)--(-1,0);
		\draw[dotted] (-1,0)--(1,0);
		\draw[gray] (0,-2.5)--(0,1.5);
		\fill[yellow, opacity=0.2] (-1,0)--(-0.5,1)--(1,0)--(-0.3,-2)--(-1,0);

		\fill (-1+\transa,0) circle(0.03) node[left] {\tiny $A^{2}$};
		\fill (\transa,-2) circle(0.03) node[below] {\tiny $B^{2}$};
		\fill (1+\transa,0) circle(0.03) node[right] {\tiny $C^{2}$};
		\fill (\transa,1) circle(0.03) node[above] {\tiny $D^{2}$};
		
		\draw[dashed] (-1+\transa,0)--(-0.3+\transa,-2)--(1+\transa,0)--(-0.5+\transa,1)--(-1+\transa,0);
		\draw (-1+\transa,0)--(0+\transa,-2)--(1+\transa,0)--(0+\transa,1)--(-1+\transa,0);
		\draw[gray] (0+\transa,-2.5)--(0+\transa,1.5);
		\draw[gray] (0+\transa-1.5,-0.5)--(0+\transa+1.5,-0.5);
		\fill[yellow, opacity=0.2] (-1+\transa,0)--(0+\transa,-2)--(1+\transa,0)--(0+\transa,1)--(-1+\transa,0);
		\draw[->,red,thick] (-0.3+\transa,-2)--(0+\transa,-2);
		\draw[->,red,thick] (-0.5+\transa,1)--(0+\transa,1);
		\draw[->,thick] (\transa-3.2,0)--(\transa-1.7,0);
		\node[below left] at (\transa-1.7,0) {\small \textit{Step 1}};
        \node[above] at (\transa-2.2,0) {\small \textit{Proposition \ref{T2}}};
		
		\fill (-1+\transb,-0.5) circle(0.03) node[left] {\tiny $A^{3}$};
		\fill (\transb,-2) circle(0.03) node[below] {\tiny $B^{3}$};
		\fill ((1+\transb,-0.5) circle(0.03) node[right] {\tiny $C^{3}$};
		\fill (\transb,1) circle(0.03) node[above] {\tiny $D^{3}$};
		
		\draw[dashed] (-1+\transb,0)--(0+\transb,-2)--(1+\transb,0)--(0+\transb,1)--(-1+\transb,0);
		\draw (-1+\transb,-0.5)--(0+\transb,-2)--(1+\transb,-0.5)--(0+\transb,1)--(-1+\transb,-0.5);
		\draw[gray] (0+\transb-1.5,-0.5)--(0+\transb+1.5,-0.5);
		\draw[gray] (0+\transb,-2.5)--(0+\transb,1.5);
		\fill[yellow, opacity=0.2] (-1+\transb,-0.5)--(0+\transb,-2)--(1+\transb,-0.5)--(0+\transb,1)--(-1+\transb,-0.5);
		\draw[->,red,thick] (-1+\transb,0)--(-1+\transb,-0.5);
		\draw[->,red,thick] (1+\transb,0)--(1+\transb,-0.5);
		\draw[->,thick] (\transb-3.2,0)--(\transb-1.7,0);
		\node[below left] at (\transb-1.7,0) {\small \textit{Step 2}};
        \node[above] at (\transb-2.2,0) {\small \textit{Proposition \ref{T2}}};
		
		\fill (-\r+\transc, -0.5) circle(0.03) node[left] {\tiny $A^{4}$};
		\fill (0+\transc,-\r-0.5) circle(0.03) node[below] {\tiny $B^{4}$};
		\fill (\r+\transc,-0.5) circle(0.03) node[right] {\tiny $C^{4}$};
		\fill (0+\transc,\r-0.5) circle(0.03) node[above] {\tiny $D^{4}$};
		\draw[->, red,thick] (-1+\transc,-0.5)--(-\r+\transc, -0.5);
		\draw[->,red,thick] (0+\transc,1)--(0+\transc,\r-0.5);
		\draw[->,red,thick] (1+\transc,-0.5)--(\r+\transc,-0.5);
		\draw[->,red,thick] (\transc,-2)--(0+\transc,-\r-0.5);
		
		\draw[dashed] (-1+\transc,-0.5)--(0+\transc,-2)--(1+\transc,-0.5)--(0+\transc,1)--(-1+\transc,-0.5);
		\draw (-\r+\transc, -0.5)--(0+\transc,\r-0.5)--(\r+\transc,-0.5)--(0+\transc,-\r-0.5)--(-\r+\transc,-0.5);
		\draw[gray] (-1.4+\transc,-0.5)--(1.4+\transc,-0.5);
		\draw[gray] (0+\transc,-2.5)--(0+\transc,1.5);
		\fill[yellow, opacity=0.2] (-\r+\transc, -0.5)--(0+\transc,\r-0.5)--(\r+\transc,-0.5)--(0+\transc,-\r-0.5)--(-\r+\transc,-0.5);
		\draw[->,thick] (\transc-3.2,0)--(\transc-1.7,0);
		\node[below left] at (\transc-1.7,0) {\small \textit{Step 3}};
        \node[above] at (\transc-2.2,0) {\small \textit{Theorem \ref{rhombustheoremc}}};
	\end{tikzpicture}
	\caption{Type II: continuous monotonic deformation for $D(\cdot)$}
	\label{fig:cmd2}
\end{figure}
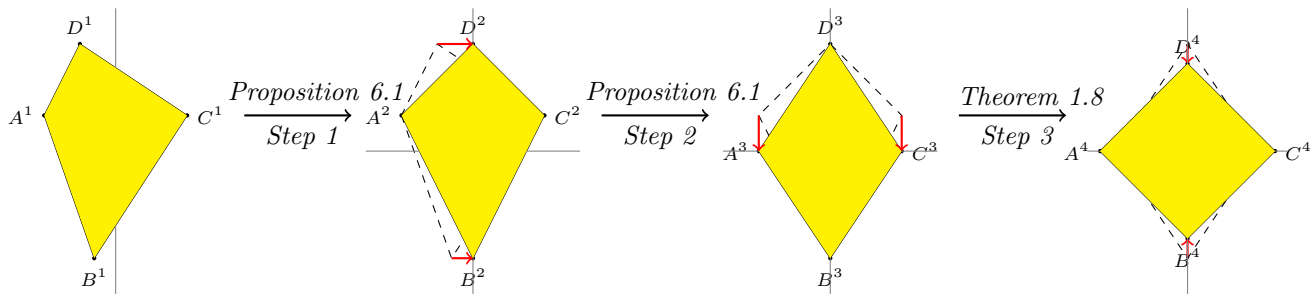

\vskip 0.3cm

\noindent
\textbf{Data Availibility Statement:} This study is a theoretical analysis, and no new data were created or analyzed. Therefore, data sharing is not applicable to this article.

\end{document}